\numberwithin{equation}{section}
\newcommand{\beq}{\begin{equation}}
\newcommand{\eeq}{\end{equation}}
\newcommand{\beqs}{\begin{eqnarray*}}
\newcommand{\eeqs}{\end{eqnarray*}}
\newcommand{\beqn}{\begin{eqnarray}}
\newcommand{\eeqn}{\end{eqnarray}}
\newcommand{\beqa}{\begin{array}}
\newcommand{\eeqa}{\end{array}}
\def\lra{\longrightarrow}
\def\bc{\begin{center}}
\def\ec{\end{center}}
\def\begeq{\begin{equation}}
\def\endeq{\end{equation}}
\def\and{\quad{\rm and}\quad}
\let\lra=\longrightarrow
\def\mapright\#1{\,\smash{\mathop{\lra}\limits^{\#1}}\,}
\newtheorem{prop}{Proposition}[section]
\newtheorem{theo}[prop]{Theorem}
\newtheorem{lem}[prop]{Lemma}
\newtheorem{cor}[prop]{Corollary}
\newtheorem{rem}[prop]{Remark}
\newtheorem{exa}[prop]{Example}
\newtheorem{defi}[prop]{Definition}
\newtheorem{conj}[prop]{Conjecture}
\begin{document}

\title{$G$-Sasaki manifolds and K-energy }

%\date{-}

\author{Yan Li and Xiaohua $\text{Zhu}^{*}$}

\subjclass[2000]{Primary: 53C25; Secondary: 32Q20,
53C55}
\keywords {K-energy, Lie group, Sasaki-Einstein metrics}

\address{School of Mathematical Sciences, Peking
University, Beijing 100871, China.}

\email{liyanmath@pku.edu.cn\ \ \ xhzhu@math.pku.edu.cn}

\thanks {* Partially supported by NSFC Grants 11331001 and 11771019.}

\begin{abstract}In this paper, we introduce a class of Sasaki manifolds with a reductive $G$-group action, called
$G$-Sasaki manifolds. By reducing K-energy to a functional defined on a class of convex functions on a
moment polytope, we give a criterion for the properness of K-energy. In particular, we deduce a sufficient and necessary condition related to the polytope for the existence of  $G$-Sasaki Einstein metrics. A similar result is also obtained for  $G$-Sasaki Ricci solitons.
As an application, we construct several  examples of  $G$-Sasaki Ricci solitons by an established openness theorem for  $G$-Sasaki Ricci solitons.

\end{abstract}

\maketitle

\tableofcontents

\setcounter{section}{-1}

\section{Introduction}

In this paper, we introduce a class of $G$-Sasaki manifolds $M$ with a reductive $G$-group action, called
$G$-Sasaki manifolds. The group acts on the K\"ahler cone $C(M)$ of $M$ as a $G\times G$ action, see Definition \ref{definition} for details. One of our motivations is from the fundamental work of Alexeev and Brion in group  compactifications theory  \cite{AB1, AB2}. In general, there are many different compactifications  $\hat G$ of   $G$ with an extended $G\times G$ action, and the compactification space may not be a smooth manifold, perhaps just an algebraic variety.

More recently, the K\"ahler geometry on $G$-manifolds (called for simplicity, if $\hat G$ is  smooth and K\"ahlerian) has been extensively studied (cf. \cite{AK, Del2, Del3,  LZZ, LZ, De17}). For examples, Delcroix proved the existence of K\"ahler-Einstein metrics on a Fano $G$-manifold under a sufficient and necessary condition \cite{Del2}, and later, Li, Zhou and Zhu gave another proof of Delcroix's result  and generalize it to K\"ahler-Ricci solitons \cite{LZZ}. Moreover, Delcroix's condition can be explained in terms of $K$-stability \cite {LZZ} (also see \cite{Del3}),  and thus their results can be both regarded as
direct proofs to Yau-Tian-Danaldson conjecture in case of $G$-manifolds \cite{Yau93,  T1, T4, CDS}.

In Sasaki geometry, a transverse K\"ahler metric is very closely related  to a K\"ahler metric on a complex manifold (cf. \cite{Boyer, Futaki-Ono-Wang}). In particular, if a Sasaki manifold $M$ is regular or quasi-regular, then $M$ is  just an $S^1$-bundle over a K\"ahler manifold or an orbifold. Another relationship is that a transverse  Sasaki-Einstein metric corresponds to  a K\"ahler-Ricci flat cone.  Recently,  Collins and Sz\'ekelyhidi established a link between  transverse  Sasaki-Einstein metrics and  stable K\"ahler cones   as in  the Yau-Tian-Danaldson conjecture  \cite{CS}. The question of existence of Sasaki-Einstein metrics  has received increasing attention
in the physics community through their connection to the AdS/CFT correspondence
(cf. \cite{Ma, Martelli-Sparks-Yau 2, CDSh}).  We refer the reader to see  many interesting examples of such metrics  in a monumental work of Boyer and Galicki \cite{Boyer}.

Our goal in this paper is to extend  the argument  in \cite{LZZ}  on  $G$-manifolds  to $G$-Sasaki manifolds. In particular, we
prove a version of Delcroix's theorem for the existence of transverse  Sasaki-Einstein metrics   in case of  $G$-Sasaki manifolds.  Our  result also generalizes a beautiful theorem of Futaki, Ono and Wang
for the existence of transverse  Sasaki-Ricci solitons on toric Sasaki manifolds \cite{Futaki-Ono-Wang}.

To state our main results, let us introduce some notations for Lie group. Let $G$ be a complex, connected, reductive group of complex dimension $(n+1)$, which is the complexification of a  maximal compact subgroup $K$ of $G$.  Let $T$ be a maximal compact torus of $K$ and $T^c$  its complexification.
 Denote by $\mathfrak g, \mathfrak t$ the Lie algebra of $G$ and $T$, respectively. Set $\mathfrak a=J_G\mathfrak t$,  where $J_G$ is  the complex structure of $G$.   We fix a scalar product $\langle\cdot,\cdot\rangle$ on $\mathfrak a$ which extends the Killing form defined on the semi-simple part $\mathfrak a_{ss}$ of $\mathfrak a$ with persevering $\mathfrak a_{ss}$ orthogonal to the centre $\mathfrak a_z=\mathfrak a\cap\mathfrak z(\mathfrak g)$.
 Denote by $R_G$ the root system of $(G,T^c)$ and choose a system $R_G^+$ of positive roots, which defines a positive Weyl chamber $\mathfrak a_+\subset\mathfrak a$.    Let $\mathfrak a^*$ be  the dual of  $\mathfrak a$  and  $\mathfrak a^*_+$  the dual of $\mathfrak a_+$  under $\langle\cdot,\cdot\rangle$.

Since the K\"ahler cone $C(M)$ of $G$-Sasaki manifold $M$ contains a toric cone $Z$ generated by the torus $T^c$,     there are a  moment  polytope cone $\mathfrak C$ associated to  $Z$  and a  restricted moment polytope $\mathcal P\subset \mathfrak C$  associated to $Z\cap M$, respectively    (cf. Section 2, 3).    Let $\mathcal P_+=\mathcal P\cap\mathfrak a_+^*$. Define a function on $\mathfrak a^*_+$ related to positive roots in $R_G^+$ by
$$\pi(y)=\prod_{\alpha\in R_G^+}\langle y,\alpha\rangle^2.$$
We introduce the barycentre of $\mathcal P_+$ by
\begin{eqnarray*}
bar(\mathcal P_+)=\frac{\int_{\mathcal P_+}y \pi\,d\sigma_c}{\int_{\mathcal P_+}\pi\,d\sigma_c},
\end{eqnarray*}
where $d\sigma_c$ is the Lebesgue measure on $\mathcal P$.

Let $\Xi$  be  the relative interior of the cone generated by $R_G^+$ and set
$$\sigma=\frac{1}{2}\sum_{\alpha\in R_G^+}\alpha.$$
 Then we  state  our first main result as follows.

\begin{theo}\label{main thm}
Let $(M,g)$ be a $(2n+1)$-dimensional $G$-Sasaki manifold with $\omega_g^T\in \frac{\pi}{n+1}c_1^B(M)>0$. Then $M$ admits a transverse Sasaki-Einstein metric if and only if $bar(\mathcal P_+)$
satisfies
\begin{eqnarray}\label{0316}
bar(\mathcal P_+)-{\frac{2}{n+1}}\sigma+{\frac{1}{n+1}}\gamma_0\in\Xi,
\end{eqnarray}
where $\gamma_0$ is a rational vector in $\mathfrak a_z^*$ (the dual of $\mathfrak a_z$) determined in Proposition \ref{Fano condition}.
\end{theo}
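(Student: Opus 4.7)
The plan is to follow the Li–Zhou–Zhu strategy for $G$-manifolds, adapted to the transverse Kähler structure on the $G$-Sasaki manifold $M$. The argument splits into a reduction of the K-energy to a functional on convex functions on the polytope $\mathcal{P}_+$, a properness criterion in terms of the leading linear part of that functional, and finally the conclusion via the continuity method for transverse Sasaki–Einstein metrics.

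First I would restrict attention to $K\times K$-invariant transverse Kähler potentials $\varphi$ of $\omega_g^T\in\frac{\pi}{n+1}c_1^B(M)$. Using the $G\times G$ action on the Kähler cone $C(M)$ together with the KAK decomposition, such a potential on the open orbit can be encoded by a smooth $W$-invariant convex function $\psi$ on $\mathfrak{a}$, whose Legendre transform $u$ is a convex function on the moment polytope $\mathcal{P}$ with prescribed boundary asymptotics. The Weyl integration formula converts integration over $M$ of $K\times K$-invariant quantities into integration over $\mathcal{P}_+$ against the weight $\pi(y)\,d\sigma_c$. The rational vector $\gamma_0\in\mathfrak{a}_z^*$ from Proposition \ref{Fano condition} enters at this stage: it encodes how the canonical line bundle of $C(M)$ and the Reeb direction shift the transverse Ricci form relative to the natural torus-equivariant structure.

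Next I would write the transverse K-energy $\mathcal{M}(\varphi)$ in these coordinates. As in the $G$-manifold setting, it decomposes as
\begin{equation*}
\mathcal{M}(u)=-\int_{\mathcal{P}_+}\log\det(D^2 u)\,\pi\,d\sigma_c+\int_{\mathcal{P}_+}\bigl\langle y,\,bar(\mathcal{P}_+)-\tfrac{2}{n+1}\sigma+\tfrac{1}{n+1}\gamma_0\bigr\rangle\,\pi\,d\sigma_c+(\text{lower order}),
\end{equation*}
up to normalization of $u$. The entropy-like first term is bounded below on the class of normalized convex functions, while the second term is linear. Properness of $\mathcal{M}$ is therefore equivalent to the positivity of this linear term on the cone of normalized $W$-invariant convex functions on $\mathcal{P}_+$, and a standard support-function argument identifies that positivity with the condition that the vector $bar(\mathcal{P}_+)-\frac{2}{n+1}\sigma+\frac{1}{n+1}\gamma_0$ lies in the interior $\Xi$ of the cone generated by $R_G^+$.

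Finally, for the necessity direction, if a transverse Sasaki–Einstein metric exists one reads off the vanishing of a Futaki-type invariant: testing $\mathcal{M}'=0$ against affine-linear $W$-invariant functions forces the same barycentre identity. For sufficiency, properness of $\mathcal{M}$ together with the openness theorem for $G$-Sasaki Ricci solitons mentioned in the abstract (applied at $\lambda=0$) yields a solution via continuity in the parameter. The main obstacle I expect is the correct bookkeeping of $\gamma_0$ and the transverse volume normalization: unlike the K\"ahler case on $G$-manifolds, the Reeb vector field produces an extra shift in the linearization of the transverse Ricci potential, and one must verify that this shift matches the $\frac{1}{n+1}\gamma_0$ coming from Proposition \ref{Fano condition} so that both the necessity and sufficiency directions produce exactly the condition \eqref{0316}.
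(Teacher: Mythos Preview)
Your overall architecture is right --- reduce to convex functions on $\mathcal P_+$, separate the K-energy into a linear part and a nonlinear part, and use properness plus a continuity argument --- but there are genuine gaps in both directions.

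\textbf{Necessity.} Testing $\mathcal K'=0$ against \emph{affine-linear} $W$-invariant functions only gives you the vanishing of the Futaki invariant, i.e.\ that $bar(\mathcal P_+)$ has no central component (cf.\ Lemma~\ref{Fut=0 and L=0} and (\ref{5.8+})). This is strictly weaker than (\ref{0316}), which constrains the semisimple projection to lie in the \emph{interior} of the cone $\Xi$. The paper obtains the full condition by a different mechanism (Proposition~\ref{ness}): from the constant-scalar-curvature equation one shows $\mathcal L(f)\ge 0$ for every \emph{convex $W$-invariant piecewise linear} $f$, with equality only for central linear functions. Then a specific piecewise linear test function built from a fundamental weight, $f(v)=\max_{w\in W}\langle w\cdot\varpi_1,v\rangle$, produces a contradiction if (\ref{0316}) fails. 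Affine-linear tests cannot see this.

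\textbf{Sufficiency.} Two issues. First, your claim that the entropy-like term $-\int_{\mathcal P_+}\log\det(D^2u)\,\pi$ is simply ``bounded below on normalized convex functions'' is not true in this setting; the nonlinear part $\mathcal N(u)$ also contains the $\chi(\nabla u)$ term, and the combined expression is only controlled \emph{relative to} the linear part: $\mathcal N^+(u)\ge -C_L\mathcal L(u)-C_0$ (Proposition~\ref{0505+}). Proving this requires the estimate on the quantity $Q$ near the Weyl walls (Lemma~\ref{0504+}), which is one of the main technical points. Second, the openness theorem you invoke (Theorem~\ref{soliton deformation}) deforms the \emph{Reeb vector field} $\xi$ inside $\Sigma_O$; it is not a continuity parameter that connects to a known solution, and in any case it is proved downstream of the soliton analogue of Theorem~\ref{main thm}, so invoking it here would be circular. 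The paper instead proves properness of $\mathcal K(\cdot)$ modulo $Z(H)$ directly (Proposition~\ref{main-proposition}, via Theorem~\ref{proper principle}), and then feeds this into a version of Zhang's theorem adapted to the presence of automorphisms (Proposition~\ref{proper implies existence}) to run the continuity method (\ref{0210}).
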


(\ref{0316}) is an obstruction to the existence of $G$-Sasaki Einstein metrics. In fact, we will use an argument in \cite{ZZ, ZZ1} to derive an analytic obstruction to the existence of $G$-Sasaki metrics with constant transverse scalar curvature in terms of convex $W$-invariant piecewise linear functions (cf. Proposition \ref{ness}). Then by a construction of  piecewise linear function in \cite{LZZ}, the analytic obstruction implies (\ref{0316}).

For the sufficient part of Theorem \ref{main thm}, we use the argument in \cite{LZZ} to prove the properness of K-energy on the space of $K\times K$-invariant potentials on ${\frac{\pi}{n+1}} c_1^B(M)$ through the reduced K-energy $\mu(\cdot)$. Since $\mu(\cdot)$ is defined on a class of convex functions on $\iota^*(\mathcal P_+)$ and
$ \mathcal P$ does not satisfy the Delzant condition in general \cite{De}, we shall modify the proof of main theorem in \cite[Theorem 1.2]{LZZ}. Here $\iota^*$ is an isomorphism from $\mathcal P_+$ to a polytope $P$ in a subspace of codimension one in $\mathfrak a^*$ (cf. Section 3).
In fact, our method works for any $G$-Sasaki manifold to give a criterion for the properness of K-energy (cf. Theorem \ref{proper general class}). It is interesting to mention that the form of $\mu(\cdot)$ may depend on the choice of $\iota^*$ if its transverse K\"ahler class of $G$-Sasaki manifold is not belonged to a multiple of $c_1^B(M)$ (cf. Remark \ref{mu-depends}).

From the proof in Theorem \ref{main thm}, we actually prove the following strong properness of K-energy $\mathcal K(\cdot)$ for a   $G$-Sasaki Einstein manifold.

\begin{cor}\label{corollary-strong-sasaki}Let $(M, g)$ be a $(2n+1)$-dimensional $G$-Sasaki manifold with $\omega_g^T\in{\frac{\pi}{n+1}}c_1^B(M)$.
Suppose that $M$ admits a transverse Sasaki-Einstein metric. Then there are $\delta, C_\delta>0$ such that for any $K\times K$-invariant transverse K\"ahler potential $\psi$ of $\omega_g^T$ it holds
\begin{align}\label{conjecture-strong}
\mathcal K(\psi)
&\ge \delta \inf_{\tau\in Z'(T^c)} I(\psi_\tau)-C_\delta,
\end{align}
where $Z'(T^c)\subset T^c\cap {\rm Aut}^T(M)$ is a subgroup of the centre $Z(G)$ with codimension 1, ${\rm Aut}^T(M)$ is the transverse holomorphic group of $M$, $I(\cdot)$ is a functional defined by (\ref{i-functional}), and $\psi_\tau$ is an induced transverse K\"ahler potential of $\tau^*(\omega_g^T+\sqrt{-1}\partial\bar\partial \psi)$ by $\tau$.
\end{cor}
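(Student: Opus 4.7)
The plan is to extract the strong properness estimate directly from the sufficiency argument for Theorem \ref{main thm} rather than treating it as an independent statement. Recall that the strategy there (following \cite{LZZ}) reduces $\mathcal K(\psi)$ on $K\times K$-invariant transverse K\"ahler potentials to a functional $\mu(u)$ defined on a class of convex $W$-invariant functions $u$ on $P=\iota^*(\mathcal P_+)$, via the Legendre-type correspondence between $K\times K$-invariant potentials and their restrictions to a maximal torus slice. First I would recall the precise form of this reduction and of the functional $I(\cdot)$ in (\ref{i-functional}), and check that translations of $u$ by elements of the linear part of $\iota^*(\mathfrak a_z^*)$ correspond precisely to the pull-back action of the central subgroup $Z'(T^c)$ on $\omega_g^T+\pbp\psi$.

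Next I would upgrade the properness obtained inside the proof of Theorem \ref{main thm} to a linear lower bound. In the reduced picture the hypothesis that $M$ admits a transverse Sasaki-Einstein metric is equivalent to condition (\ref{0316}), and the proof of sufficiency produces, via the barycentre inequality together with the estimate of the $\pi$-weighted linear term, constants $\delta_0, C_0>0$ with
\begin{equation*}
\mu(u)\ \ge\ \delta_0\,\mathcal L(u)-C_0
\end{equation*}
for all normalized convex $W$-invariant $u$ on $P$, where $\mathcal L(u)$ is the reduced analogue of $I$. The normalization used there fixes $u$ modulo the affine functions coming from $\iota^*(\mathfrak a_z^*)$, so the inequality is invariant under precisely the $Z'(T^c)$-action; taking the infimum over this action on the right-hand side gives the sharp form required.

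Finally I would translate back through the reduction: since $\mathcal K(\psi)=\mu(u_\psi)$ up to a bounded error (this bounded error absorbs the difference between the transverse and the cone-level normalizations in the Sasaki setting) and $\mathcal L(u_\psi)\ge c\, I(\psi)-C$ for $K\times K$-invariant potentials, combining these with the previous inequality yields
\begin{equation*}
\mathcal K(\psi)\ \ge\ \delta\,\inf_{\tau\in Z'(T^c)} I(\psi_\tau)-C_\delta,
\end{equation*}
as claimed. The main obstacle, as signalled in Remark \ref{mu-depends}, is bookkeeping the dependence of $\mu$ and of the reduction on the choice of the isomorphism $\iota^*$, and making sure that the sub-torus $Z'(T^c)$ of $Z(G)$ with codimension one is exactly the group that neutralizes the translational invariance of $\mu$; this requires a careful identification of the transverse holomorphic automorphisms acting trivially on the Reeb direction. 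Once this identification is pinned down, the remaining inequalities are the ones already proved in Theorem \ref{main thm}.
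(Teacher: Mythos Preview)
Your proposal is correct and follows essentially the same route as the paper: invoke the necessary direction of Theorem \ref{main thm} to obtain the barycentre condition (\ref{0316}) (equivalently (\ref{0316+})), then feed this into the properness machinery from the sufficiency proof (Proposition \ref{main-proposition}, Lemma \ref{J normalization}, and the chain (\ref{proper-maximal-group})) to get the linear lower bound, with $Z'(T^c)$ identified as $Z(H)$ for $H=H_0$ determined by $\gamma=\gamma_0$. One small terminological slip: $\mathcal L(u)$ is the linear part of the reduced K-energy, not the reduced analogue of $I$; the quantity that compares directly with $J(\psi)$ (hence $I(\psi)$) is $\int_{P_+} u\,\pi\,dv$ via Lemma \ref{J normalization}, and $\mathcal L$ controls it through Lemma \ref{linear prop} and (\ref{0502+})---but this does not affect your argument.
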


For a general Sasaki manifold which admits a transverse Sasaki-Einstein metric, we propose the following conjecture.

\begin{conj}\label{conjecture-strong-sasaki}Let $(M, g)$ be a $(2n+1)$-dimensional Sasaki manifold with $\omega_g^T={\frac{\pi}{n+1}}c_1^B(M)$. Suppose that $M$ admits a transverse Sasaki-Einstein metric. Then there
are $\delta, C_\delta>0$ such that for any $K$-invariant transverse K\"ahler potential $\psi$ of $\omega_g^T$ it holds
\begin{eqnarray}\begin{aligned}\label{conjecture-strong-2}
\mathcal K(\psi)
&\ge \delta \inf_{\tau\in Z({\rm Aut}^T(M))} I(\psi_\tau)-C_\delta,
\end{aligned}\end{eqnarray}
where $K$ is a maximal compact subgroup of ${\rm Aut}^T(M)$.
\end{conj}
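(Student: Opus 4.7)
The plan is to port the variational and metric approach that resolved the K\"ahler-Einstein properness conjecture (Darvas-Rubinstein, Tian, Berman-Boucksom-Jonsson) to the transverse setting. Since $\omega_g^T\in\tfrac{\pi}{n+1}c_1^B(M)$, the K-energy $\mathcal K(\cdot)$ is, up to normalisation, the transverse analogue of the Mabuchi energy for the K\"ahler-Einstein problem, and the hypothesis that a transverse Sasaki-Einstein metric exists guarantees that $\mathcal K$ attains its minimum. The natural ambient space is $\mathcal H^T(\omega_g^T)$, the space of $K$-invariant transverse K\"ahler potentials, and the first task is to construct its Finsler $d_1$-completion through weak transverse geodesics; this has already been developed in the foliated setting (He-Sun, Zheng, and others) parallel to the work of Chen, Darvas and Berman-Berndtsson in the K\"ahler case.

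The next step is to establish the geodesic convexity of $\mathcal K$ along weak transverse geodesics, together with a transverse analogue of the Berman-Berndtsson uniqueness of transverse Sasaki-Einstein metrics modulo $Z({\rm Aut}^T(M))$. Granting both, the scheme of Darvas-Rubinstein reduces (\ref{conjecture-strong-2}) to the following rigidity statement: every $d_1$-unit-speed minimising geodesic ray for $\mathcal K$ must come from a holomorphic one-parameter subgroup of $Z({\rm Aut}^T(M))$. The transverse holomorphic invariance of $\mathcal K$ would then absorb the escape direction into $\inf_{\tau\in Z({\rm Aut}^T(M))}$, giving coercivity of $\mathcal K$ modulo $Z({\rm Aut}^T(M))$ and hence (\ref{conjecture-strong-2}).

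The main obstacle is precisely this rigidity step for minimising rays. In the $G$-Sasaki case treated in Corollary \ref{corollary-strong-sasaki} the $K\times K$-symmetry reduces the matter to a finite-dimensional convex-analysis problem on the polytope $\mathcal P_+$, where the escape direction of any minimising sequence is visibly a translation in $\mathfrak a_z^*$ coming from $Z'(T^c)$. Without such symmetry one needs instead a non-Archimedean compactness argument in the spirit of Berman-Boucksom-Jonsson: identify the limiting geodesic ray with a non-Archimedean transverse K\"ahler potential on the K\"ahler cone $C(M)$, and show via transverse Ding-stability and the hypothesised Sasaki-Einstein metric that its only possible non-trivial source is a one-parameter subgroup of $Z({\rm Aut}^T(M))$. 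Developing the requisite non-Archimedean transverse pluripotential theory on $C(M)$, together with a uniform transverse $K$-stability statement for the Sasaki pair, is the most serious technical input; once in place, the remainder of the argument should parallel Corollary \ref{corollary-strong-sasaki}.
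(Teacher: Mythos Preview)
The statement you are attempting to prove is labelled \emph{Conjecture} in the paper and is explicitly left open: the paper does not supply a proof.  Immediately after stating it, the authors remark that it is a Sasaki analogue of Tian's properness conjecture, that Darvas--Rubinstein established the K\"ahler version only after replacing $Z({\rm Aut}(M))$ by the full group ${\rm Aut}(M)$, and that the conjecture is known (via Zhang) only when ${\rm Aut}^T(M)$ is finite.  There is therefore no ``paper's own proof'' to compare your attempt against.

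Your proposal is not a proof but a research programme, and you yourself identify the decisive gap: the rigidity of minimising geodesic rays and the non-Archimedean transverse pluripotential theory needed to force any escape direction into a one-parameter subgroup of $Z({\rm Aut}^T(M))$.  Even granting all the foliated analogues you invoke (transverse $d_1$-completion, convexity of $\mathcal K$, Berman--Berndtsson uniqueness), the Darvas--Rubinstein machinery would at best yield coercivity modulo the full group ${\rm Aut}^T(M)$, not modulo its centre $Z({\rm Aut}^T(M))$ as the conjecture demands; the paper singles out exactly this distinction when citing \cite{DR}.  Reducing from the full group to the centre is an additional, genuinely open, step that your outline does not address.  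In short, what you have written is a plausible line of attack on an open problem, consistent with the remarks the authors make, but it is not a proof and should not be presented as one.
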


Conjecture \ref{conjecture-strong-sasaki} can be regarded as a version of Tian's conjecture for $K$-invariant K\"ahler potentials in case of transverse Sasaki-Einstein manifolds \cite{T1}. Recently, Darvas and Rubinstein proved Tian's conjecture when
$Z({\rm Aut}(M))$ is replaced by ${\rm Aut}(M)$ in case of K\"ahler-Einstein manifolds \cite{DR}. We note that
Conjecture \ref{conjecture-strong-sasaki} is true by a result of Zhang \cite{Zhang}, if ${\rm Aut}^T(M)$ is finite.

An analogy of Theorem \ref{main thm} will be established for
$G$-Sasaki Ricci solitons (cf. Theorem \ref{soliton theorem}). Then by deformation of Reeb vector fields as in \cite{Martelli-Sparks-Yau, Martelli-Sparks-Yau 2}, we prove
the following openness theorem for transverse Sasaki-Ricci solitons.

\begin{theo}\label{soliton deformation}Let $\mathfrak C^\vee$ be the interior of the dual cone of $\mathfrak C$ and
$$\Sigma=\mathfrak C^\vee\cap\mathfrak a_z.$$
Set
\begin{eqnarray}\begin{aligned}\label{xi-condition}\Sigma_O=\Sigma\cap\{\xi|~\gamma_0( \xi)=-(n+1)\}.
\end{aligned}\end{eqnarray}
Suppose that a $G$-Sasaki manifold $(M,g_0)$ with the Reeb field $\xi_0$ admits a transverse Sasaki-Ricci soliton. Then for $\xi\in\Sigma_O$ sufficiently close to $\xi_0$, there is a deformation of $G$-Sasaki manifold $(M,\xi)$ with the Reeb vector field $\xi$ from $(M,g_0, \xi_0)$   such that $(M,\xi)$ admits a transverse Sasaki-Ricci soliton.
\end{theo}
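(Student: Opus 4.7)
The plan is to combine the soliton analogue of Theorem \ref{main thm} (Theorem \ref{soliton theorem}) with the Martelli-Sparks-Yau deformation of Reeb vector fields, exploiting the fact that the polytope criterion for existence is an open condition. The constraint $\Sigma_O=\Sigma\cap\{\xi\mid \gamma_0(\xi)=-(n+1)\}$ is precisely the affine slice of the Reeb cone on which the basic first Chern class condition $\omega_g^T\in \frac{\pi}{n+1}c_1^B(M)$ is preserved under the deformation; here $\gamma_0$ is the rational vector from Proposition \ref{Fano condition} that encodes the normalisation coming from the canonical bundle.

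First I would recall from \cite{Martelli-Sparks-Yau, Martelli-Sparks-Yau 2} that for any $\xi\in \Sigma$ the Kähler cone $C(M)$ admits a $K\times K$-invariant radial function whose corresponding Reeb field is exactly $\xi$, producing a well-defined $G$-Sasaki structure $(M,g_\xi,\xi)$ with the same underlying CR structure on its toric sub-cone $Z$. Next I would verify that the restricted moment polytope $\mathcal P(\xi)_+=\mathcal P(\xi)\cap\mathfrak a_+^*$, obtained by slicing the moment polytope cone $\mathfrak C$ by the affine hyperplane $\{y\mid \langle y,\xi\rangle=1\}$, together with its image polytope $P(\xi)$ under the map $\iota^*$ from Section 3, varies smoothly in $\xi\in\Sigma_O$. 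Consequently, the weighted barycentre $bar(\mathcal P(\xi)_+)$ (or more precisely, its soliton analogue involving the soliton vector field $X$ determined by the modified Futaki invariant) depends continuously on $\xi$.

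Then I would invoke Theorem \ref{soliton theorem}, which should read (in parallel with Theorem \ref{main thm}) that $(M,\xi)$ admits a transverse Sasaki-Ricci soliton if and only if the soliton-modified barycentre satisfies an inclusion
\[
bar_X(\mathcal P(\xi)_+)-\tfrac{2}{n+1}\sigma+\tfrac{1}{n+1}\gamma_0\in\Xi,
\]
where $X=X(\xi)$ is uniquely determined by the polytope data. By hypothesis, this inclusion holds at $\xi=\xi_0$. Since $\Xi$ is the relative interior of the positive root cone, and both $\mathcal P(\xi)_+$ and $X(\xi)$ depend continuously on $\xi$, the inclusion is an open condition on $\Sigma_O$. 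Hence there is a neighbourhood of $\xi_0$ in $\Sigma_O$ on which the criterion persists, and Theorem \ref{soliton theorem} then produces the transverse Sasaki-Ricci soliton on $(M,\xi)$.

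The main obstacle will be the continuous dependence of the soliton vector field $X(\xi)$ on $\xi$, since $X$ is defined only implicitly as the unique zero of a modified Futaki invariant on $\mathfrak a_z$; one must check that this defining equation is transverse in $X$ (so that the implicit function theorem applies in the polytope data) and that the resulting $X(\xi)$ keeps the soliton-weighted barycentre in $\Xi$. A secondary technical point is to confirm that the class condition $\omega_{g_\xi}^T\in\frac{\pi}{n+1}c_1^B(M)$ is preserved along the family, which is exactly why the hyperplane $\gamma_0(\xi)=-(n+1)$ is built into the definition of $\Sigma_O$; once this normalisation is in place, the continuity of all polytope quantities follows from the smooth dependence of the slice of $\mathfrak C$ on $\xi$.
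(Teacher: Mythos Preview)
Your proposal is correct and follows essentially the same route as the paper: deform the Reeb field within $\Sigma_O$ via Martelli--Sparks--Yau, observe that the barycentre condition of Theorem \ref{soliton theorem} is open because $\Xi$ is a relative interior, and conclude. The paper resolves precisely the ``main obstacle'' you flag---continuous dependence of the soliton vector $X(\xi)$---by computing the Jacobian $\partial\Psi_i/\partial X^j=\int_{\mathcal P_+}y_iy_je^{X^ky_k}\pi\,d\sigma_c$ of the modified Futaki equations and noting it is strictly positive definite (citing \cite[Lemma 2.2]{TZ02}), so the implicit function theorem applies exactly as you anticipate.
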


It is clear that $\xi\in \Sigma_O$ in (\ref{xi-condition}) may not be rational. Thus by Theorem \ref{soliton deformation}, one can construct many irregular
$G$-Sasaki Ricci solitons from a $G$-Sasaki Einstein metric in ${\frac{\pi}{n+1}} c_1^B(M)$, see examples in  Section 8, for details.

The organization of paper is as follows. In Section 1, we  recall some basic knowledge in Sasaki geometry and in Section 2, we introduce the notion of $G$-Sasaki manifolds $M$. In Section 3, we begin to study $K\times K$-invariant metrics and discuss the moment map restricted on a torus orbit in $M$. The reduced K-energy $\mu(\cdot)$ will be computed in Section 4 and a criterion for the properness of $\mu(\cdot)$ will be given (cf. Theorem \ref{proper general class}). In Section 5, we prove Theorem \ref{proper general class} in case of $ \omega_g^T\in {\frac{\pi}{n+1}} c_1^B(M)$, and then in Section 6, we prove Theorem \ref{main thm}. Theorem \ref{soliton deformation} will be proved in Section 7. In Section 8, we discuss some examples of $G$-Sasaki Einstein metrics and $G$-Sasaki Ricci solitons.

\section{Sasaki geometry}\label{2}

By definition, a $(2n+1)$-dimensional Riemannian manifold $(M,g)$ is called a \emph{Sasaki manifold} if and only if its cone manifold $(C(M),~\bar{g})$ is a K\"ahler manifold, where
$$C(M)=M\times \mathbb R_+,\,\bar g=d\rho^2+\rho^2g,$$
and $\rho\in \mathbb{R}_+$.
Following \cite{Futaki-Ono-Wang}, we denote
\begin{eqnarray}\label{0201+}
\xi=J\rho{\frac{\partial}{\partial \rho}},~\eta(\cdot)={\frac{1}{\rho^2}}\bar g(\cdot,\xi),
\end{eqnarray}
where $J$ is the complex structure of $C(M)$. The restriction of $\xi$ on $M$ is called the \emph{Reeb vector field} of $M$. Let $\nabla^g$ be the Levi-Civita connection of $g$. Then $\Phi(X)=\nabla^g_X\xi$ defines an $(1,1)$-tensor $\Phi$ on $TM$. We call $(g,~\xi,~\eta,~\Phi)$ the \emph{Sasaki structure} of $(M,g)$.

By \cite{Boyer}, the relationship between $J$ and $\Phi$ on $M$
is given by
\begin{eqnarray}\label{J-Phi}
J(X)=\left\{\begin{aligned}
&\Phi(X)-\eta(X)\rho{\frac\partial{\partial \rho}},~& X\in TM,\\
&\xi,&X=\rho{\frac\partial{\partial \rho}}.
\end{aligned}\right.
\end{eqnarray}
Thus there are local coordinates $(x^0_{(\alpha)},z^1_{(\alpha)},...,z^n_{(\alpha)})$ on each chart $U_{(\alpha)}\subset M$, where $z^i_{(\alpha)}= x^i_{(\alpha)} +\sqrt{-1} x^{i+n}_{(\alpha)} \in\mathbb C$, $i=1,..., n$, such that
\begin{eqnarray*}
\xi=\frac{\partial}{\partial x_{(\alpha)}^0},
\end{eqnarray*}
and
$${\frac{\partial z^i_{(\beta)}}{\partial x^0_{(\alpha)}}}=0,~{\frac{\partial z^i_{(\beta)}}{\partial \bar z^j_{(\alpha)}}}=0,~\forall 1\leq i,j\leq n,$$
whenever $U_{(\alpha)}\cap U_{(\beta)}\not=\emptyset$ \cite[Chapter 6]{Boyer}. These local coordinates form a transverse holomorphic structure on $M$. Then the corresponding complex structure $\Phi^T_{(\alpha)}$ is given by
\begin{eqnarray}\begin{aligned}\label{almost-complex-structure}
\Phi^T_{(\alpha)}\left(\frac{\partial}{\partial x^{i}_{(\alpha)}}\right)=\frac{\partial}{\partial x^{i+n}_{(\alpha)}},
\end{aligned}\end{eqnarray}
which forms a global \emph{transverse complex structure} $\Phi^T$ on $M$.

Denote the transverse holomorphic group of   $M$ by ${\rm Aut}^T(M)$ and the holomorphic transformations group of $C(M)$ by ${\rm Aut}^\xi(C(M))$, which commutes  with the holomorphic flow generated by $\xi-\sqrt{-1}J\xi$. Then
$${\rm Aut}^T(M)\cong {\rm Aut}^\xi(C(M)).$$
To see this isomorphism, we note that for any $f\in {\rm Aut}^\xi(C(M))$, $f$ commutes with $\pi$, where $\pi$ is the projection from $C(M)$ to its level set $M\cong\{\rho=1\}$. Then one can define a map $\hat f$ by
\begin{eqnarray}\label{action}
\hat f=\pi_r\circ f:M\to M.
\end{eqnarray}
It is easy to see that $f_*\xi=\xi$ and ${\pi}_*\xi=\xi$. This implies that $\hat f$ preserves $\xi$. On the other hand, the complex structure $J$ on $C(M)$ is preserved by $f$.   Thus by (\ref{almost-complex-structure}), the transverse holomorphic structure $\Phi^T$ is preserved by $\hat f$.

\subsection{Basic forms and transverse K\"ahler structure}
An $m$-form $\Omega$ on $M$ is called basic if
\begin{eqnarray}\begin{aligned}\label{basic-m-form}L_{\xi}\Omega=0,~i_{\xi}\Omega=0.
\end{aligned}\end{eqnarray}
This means that
$$\Omega_{(\alpha)0i_2...i_m}dx^{i_1}_{(\alpha)}=0,~{\frac{\partial}{\partial x^0_{(\alpha)}}}\Omega_{(\alpha)i_1...i_m}=0,~\forall i_1,...,i_m, $$
if we let $\Omega=\Omega_{(\alpha)i_1...i_m}dx^{i_1}_{(\alpha)}\wedge...\wedge dx^{i_m}_{(\alpha)}$ under local transverse holomorphic coordinates $z^0_{(\alpha)},...,z^n_{(\alpha)}$.
Thus $\partial_B,~\bar\partial_B$ operators are well-defined for any basic $m$-form $\Omega$. As same as Hodge-Laplace operator, we introduce
$$\bigtriangleup_B=\sqrt{-1} (\bar \partial_B^*\bar \partial_B+\bar\partial_B\bar\partial_B^*).$$
In particular, for a basic function $f$, we have
$$\bigtriangleup_Bf=\sqrt{-1} {\rm tr}_g( \partial_B\bar \partial_B f).$$

We are interested in basic $(1,1)$-forms. Since $\bar g$ is a cone metric, we have
\begin{equation}\label{0201}
\omega_{\bar g}={\frac12}\sqrt{-1}\partial\bar\partial \rho^2.
\end{equation}
It follows
\begin{eqnarray}\begin{aligned}\label{d-eta}
d\eta=2\sqrt{-1}\partial\bar\partial\log\rho.
\end{aligned}\end{eqnarray}
This means that ${\frac12}d\eta|_{M}=\omega_g^T$ is a positive basic $(1,1)$-form. Usually, $ \omega_g^T$ is called the \emph{transverse K\"ahler form}. The following lemma shows that $\hat f\in \rm{Aut}^T(M)$ preserves transverse K\"ahler class.

\begin{lem}\label{0506+}
Let $(M,g)$ be a compact $(2n+1)$-dimensional Sasaki manifold.
Then $f^*d\eta|_M$ is a basic form for any $f\in {\rm Aut}^\xi(C(M))$. Consequently, $\hat f^*\omega_g^T \in [\omega_g^T ]_B$.
\end{lem}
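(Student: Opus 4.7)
The plan is to extract the shape of $f$ from the hypothesis, then pull back the explicit potential $d\eta=2\sqrt{-1}\partial\bar\partial\log\rho$ of (\ref{d-eta}) on $C(M)$ and read off both conclusions. The defining property of ${\rm Aut}^\xi(C(M))$ gives $f_*\xi=\xi$, and since $f$ is holomorphic we also get $f_*(J\xi)=Jf_*\xi=J\xi$, i.e.\ $f_*(\rho\partial/\partial\rho)=\rho\partial/\partial\rho$. Using the product description $C(M)=M\times\mathbb R_+$ with $\rho$ the radial coordinate, these two invariances force
\[
f(x,\rho)=(\hat f(x),\,\rho\,\beta(x)),
\]
where $\hat f:M\to M$ is the map (\ref{action}) and $\beta:M\to\mathbb R_+$ is a positive function invariant under the Reeb flow, hence basic.

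For the first assertion, I regard $d\eta$ as a form on $C(M)$; by (\ref{d-eta}) it satisfies $i_\xi d\eta=0$ and $L_\xi d\eta=0$. Together with $f_*\xi=\xi$ this gives
\[
i_\xi(f^*d\eta)=f^*(i_\xi d\eta)=0,\qquad L_\xi(f^*d\eta)=f^*(L_\xi d\eta)=0,
\]
so after restriction to $M$ the form $(f^*d\eta)|_M$ satisfies (\ref{basic-m-form}) and is basic.

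For the second assertion, the key point is that on $C(M)$ the form $d\eta=2\sqrt{-1}\partial\bar\partial\log\rho$ is also annihilated by $i_{\rho\partial/\partial\rho}$ and invariant under the dilation flow, so $d\eta=\pi^*(d\eta|_M)$ for the projection $\pi:C(M)\to M\cong\{\rho=1\}$. Combined with $(\pi\circ f)|_M=\hat f$, this yields $(f^*d\eta)|_M=\hat f^*(d\eta|_M)$. On the other hand, substituting $f^*\rho=\rho\beta(x)$ into the potential formula gives
\[
f^*d\eta=2\sqrt{-1}\partial\bar\partial\log(\rho\beta)=d\eta+2\sqrt{-1}\partial\bar\partial\log\beta,
\]
and restriction to $M$ yields
\[
\hat f^*\omega_g^T-\omega_g^T=\sqrt{-1}\partial_B\bar\partial_B(\log\beta|_M),
\]
which is basic-exact because $\log\beta|_M$ is a basic function; hence $\hat f^*\omega_g^T\in[\omega_g^T]_B$.

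The only delicate step is the extraction of the product form $f(x,\rho)=(\hat f(x),\rho\beta(x))$ together with the basicness of $\beta$; once that is in place the remaining steps are routine pullback computations, and no further obstacle is anticipated.
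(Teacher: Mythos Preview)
Your argument is correct and is essentially the same as the paper's, just packaged differently. The paper sets $\psi=\log f^*\rho-\log\rho$ and checks directly that $\xi(\psi)=0$ and $\partial_\rho\psi=0$ using $f_*\xi=\xi$ and $f_*(\rho\partial_\rho)=\rho\partial_\rho$; your product decomposition $f(x,\rho)=(\hat f(x),\rho\beta(x))$ with $\beta$ basic is exactly the geometric restatement of those two vanishing conditions, and your $\log\beta$ is the paper's $\psi$. The one organizational difference is that you handle the first assertion (basicness of $f^*d\eta|_M$) separately via the naturality formulas $i_\xi f^*d\eta=f^*(i_{f_*\xi}d\eta)$ and $L_\xi f^*d\eta=f^*(L_{f_*\xi}d\eta)$, whereas the paper gets both conclusions at once from the potential computation; either route is fine.
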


\begin{proof}
Note that
\begin{eqnarray*}\begin{aligned}
f^*(d\eta)=f^*(\sqrt{-1}\partial\bar\partial\log\rho)
=\sqrt{-1}\partial\bar\partial\log f^*\rho.\notag
\end{aligned}\end{eqnarray*}
Then to prove the lemma, it suffices to show that
$\psi=\log f^*\rho-\log\rho$
satisfies
$$ \xi(\psi)(x) =0~{\rm and}~{\frac{\partial}{\partial\rho}}(\psi) (x)=0, ~\forall~x\in ~M.$$
In fact,
\begin{eqnarray}\begin{aligned}
\xi(\psi)(x)&={\frac1{f^*\rho(x)}}f_*\xi(\rho) ({f(x)})-{\frac1\rho}\xi(\rho) (x)\notag\\
&={\frac1{f^*\rho(x)}}\xi(\rho) ({f(x)})-{\frac1\rho}\xi(\rho) (x). \notag
\end{aligned}\end{eqnarray}
Then $
\xi(\psi) (x)=0$, since $\xi(\rho)\equiv0$.
On the other hand, by $\xi=J\rho{\frac{\partial}{\partial \rho}}$, we have
\begin{eqnarray*}
f_*\left(\rho{\frac{\partial}{\partial\rho}}\right)=\left(\rho{\frac{\partial}{\partial\rho}}\right).
\end{eqnarray*}
It follows
\begin{eqnarray}\begin{aligned}
{\frac{\partial}{\partial\rho}}(\psi)(x)&={\frac1{f^*\rho(x)}}\left(f_*{\frac{\partial}{\partial\rho}}\right)(\rho)({f(x)})-{\frac1{\rho(x)}}\notag\\
&={\frac1{f^*\rho(x)}}{\frac{f^*\rho(x)}{\rho(x)}}\left({\frac{\partial}{\partial\rho}}\right)\rho ({f(x)})-{\frac1{\rho(x)}}\notag\\
&=0.\notag
\end{aligned}\end{eqnarray}

\end{proof}

For a transverse K\"ahler form $ \omega_g^T$, its \emph{ transverse Ricci form} is defined by
\begin{eqnarray*}\begin{aligned}
{\rm Ric}^T(g)=-\sqrt{-1}\partial_B\bar\partial_B\log\det(g^T_{(\alpha)i\bar j}), ~{\rm on}~ U_{(\alpha)}, \notag
\end{aligned}\end{eqnarray*}
where $\omega_{ g}^T=\sqrt{-1}g_{(\alpha)i\bar j}^Tdz_{(\alpha)}^i\wedge dz^j_{(\alpha)}$ on $U_{(\alpha)}$. Clearly,
${\rm Ric}^T(g)$ is also a basic $(1,1)$-form.
Similar to the K\"ahler form, ${\rm Ric}^T(g)$ is $d_B$-closed and the basic cohomology class $[{\rm Ric}^T(g)]_B$ is independent with the choice of $\omega_g^T$ in   $[\omega_{ g}^T]_B$. We call $c_1^B(M)={\frac1{2\pi}}[{\rm Ric}^T(g)]_B$ the \emph{basic first Chern class}. It was proved in \cite[Proposition 4.3]{Futaki-Ono-Wang} that

\begin{prop}\label{C_1(D)=0} The first basic Chern class is represented by $cd\eta$ for some constant $c$ if and only if $c_1(\mathcal D)=0$, where $\mathcal D={\rm ker}(\eta)$.
\end{prop}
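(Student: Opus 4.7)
The plan is to exploit the canonical forgetful map
\[
\iota\colon H^\ast_B(M)\longrightarrow H^\ast(M,\mathbb R),
\]
which sends the basic cohomology class of a basic form to its ordinary de Rham class on $M$. The proof reduces to two compatibility facts: $\iota\bigl([d\eta]_B\bigr)=0$ and $\iota\bigl(c_1^B(M)\bigr)=c_1(\mathcal D)$. The first is immediate since $\eta$ is a globally defined $1$-form on $M$, so $d\eta$ is exact in $H^2(M,\mathbb R)$ even though $[d\eta]_B$ is a nonzero basic class (as $\eta\wedge(d\eta)^n$ is a volume form on $M$). The second is a Chern--Weil identification: the subbundle $\mathcal D=\ker\eta$, equipped with the complex structure $\Phi|_{\mathcal D}$, is isomorphic as a smooth complex vector bundle to the transverse holomorphic tangent bundle; the metric $g^T$ gives it a Hermitian structure; and the trace of the curvature of the resulting unitary connection, viewed as an ordinary $2$-form on $M$, equals $\mathrm{Ric}^T(g)$.

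With these ingredients the forward implication is a one-line diagram chase: if $c_1^B(M)=c\,[d\eta]_B$, applying $\iota$ yields $c_1(\mathcal D)=0$ in $H^2(M,\mathbb R)$. For the converse, assume $c_1(\mathcal D)=0$; then $c_1^B(M)$ lies in $\ker\iota$ in degree two, and one concludes using the structural identification
\[
\ker\bigl(\iota\colon H^2_B(M)\to H^2(M,\mathbb R)\bigr)=\mathbb R\cdot[d\eta]_B,
\]
which is standard for compact Sasaki manifolds. This follows from the Gysin-type long exact sequence of the characteristic foliation generated by $\xi$, the relevant segment being $H^0_B(M)\xrightarrow{\wedge[d\eta]_B}H^2_B(M)\to H^2(M,\mathbb R)$; exactness gives the claim since $H^0_B(M)=\mathbb R$ and $[d\eta]_B\neq 0$.

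The principal obstacle is precisely this last structural statement about $\ker\iota$. Everything else reduces to a routine Chern--Weil computation or a formal diagram chase; the real work, carried out either through the foliation Gysin sequence in the quasi-regular case or by a direct Reeb-flow averaging argument on a primitive of a basic exact $2$-form in the general case, sits in identifying this kernel. I would spell out the averaging version if the Gysin sequence is not already in the paper's toolkit: average the primitive over the Reeb flow, use the splitting induced by $\eta$ to write it as $f\,\eta+\beta$ with $i_\xi\beta=0$ and $f$ basic, then observe that the $d\beta$ part supplies a basic primitive modulo a multiple of $d\eta$.
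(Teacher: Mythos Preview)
The paper does not give its own proof of this proposition; it is quoted verbatim as \cite[Proposition 4.3]{Futaki-Ono-Wang}. Your argument is correct and is essentially the proof given in that reference: the forward direction is the trivial observation $\iota([d\eta]_B)=0$, the identification $\iota(c_1^B(M))=c_1(\mathcal D)$ is Chern--Weil for the transverse connection, and the converse rests on the exact segment $H^0_B(M)\xrightarrow{\wedge[d\eta]_B}H^2_B(M)\xrightarrow{\iota}H^2(M,\mathbb R)$ of the Gysin-type sequence for the Reeb foliation, giving $\ker\iota=\mathbb R[d\eta]_B$ since $H^0_B(M)=\mathbb R$.
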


A Sasaki metric $g$ is called a \emph{Sasaki-Einstein metric } on $M$ if it  satisfies
$${\rm Ric}(g)=2ng.$$
In case of $c_1(\mathcal D)=0$, the above equation is equivalent to the following transverse Sasaki-Einstein equation (cf. \cite{Futaki-Ono-Wang}),
\begin{equation}\label{0205}
{\rm Ric}^T(g)=2(n+1)\omega_g^T.
\end{equation}
In particular, $c_1^B(M)>0$.

To solve (\ref{0205}), it turns to find a basic $C^{\infty}$-function
$\psi$
in the following class ( the space of  transverse K\"ahler potentials),
\begin{eqnarray*}
\mathcal H\left(\frac{1}{2}d\eta\right)=\{\psi\in C^\infty(M) \text{ is a basic function}|~ \omega_{g_\psi}^T=\frac{1}{2}d\eta+\sqrt{-1}\partial _B \bar \partial_B\psi>0\}
\end{eqnarray*}
such that
$\omega_{g_\psi}^T={\frac12}d{\eta_\psi} =   \omega_{ g}^T +\sqrt{-1}\partial _B \bar \partial_B\psi $ satisfies (\ref{0205}).
Then (\ref{0205}) is reduced to a complex Monge-Amp\`ere equation on each $U_{(\alpha)}$ with transverse holomorphic coordinates $(z^1_{(\alpha)},...,z^n_{(\alpha)})$,
\begin{align}\label{0209}
\det(g_{(\alpha)i\bar j}^T+\psi_{(\alpha),i\bar j})=\exp(-2(n+1)\psi+h)\det(g_{(\alpha)i\bar j}^T),
\end{align}
where $h$ is a basic Ricci potential  determined by
\begin{equation}\label{0206}
{\rm Ric}^T(g)=2(n+1)\omega_g^T+\sqrt{-1}\partial_B\bar{\partial}_Bh.
\end{equation}
We will discuss (\ref{0209}) in Section 6 for details.

\subsection{Futaki invariant} In general, there is no solution of (\ref{0209}) since there are some obstructions to the existence of transverse Sasaki-Einstein metrics, such as Futaki invariant (cf. \cite{Boyer-Galicki-Simanca, Futaki-Ono-Wang}). As in K\"ahler geometry, the Futaki invariant is defined for Hamiltonian vector fields. We call a complex vector field $X$ a Hamiltonian holomorphic vector field on a Sasaki manifold if $X$ satisfies (cf. \cite[Definition 4.5]{Futaki-Ono-Wang}):
\begin{itemize}
\item[(1)] On each $U_{(\alpha)}$, $\pi_{(\alpha)*}(X)$ is a (local) holomorphic vector field on $\mathbb C^n$, where
$\pi_{(\alpha)}(\cdot)$ is the projection given by
$$\pi_{(\alpha)}(x^0_{(\alpha)},z^1_{(\alpha)},...,z^n_{(\alpha)})=(z^1_{(\alpha)},...,z^n_{(\alpha)});$$
\item[(2)] The complex-valued function $U_X=\sqrt{-1}\eta(X)$ satisfies
\begin{eqnarray*}
\bar\partial_B U_X={-\frac{\sqrt{-1}}{2}}i_Xd\eta.
\end{eqnarray*}
\end{itemize}

Denote by $\mathfrak {ham}(M)$ the Lie algebra of the Hamiltonian holomorphic vector fields. The \emph{Futaki invariant} ${\rm Fut}(X)$ is defined by
\begin{eqnarray}\label{Futaki inv}
{\rm Fut}(X)=-\int_M X(h)\left({\frac12}d\eta\right)^n\wedge\eta, ~\forall ~X\in\mathfrak {ham}(M).
\end{eqnarray}
Clearly, ${\rm Fut}(X)=0$ for any $X\in \mathfrak {ham}(M)$ if $M$ admits a \emph{transverse Sasaki-Einstein metric }.
In case $c_1^B(M)>0$, it has been showed that
$\mathfrak{aut}^T(M)\cong\mathfrak {ham}(M)$,
where $\mathfrak{aut}^T(M)$ is the space of transverse holomorphic vector fields, which can be identified with the Lie algebra of ${\rm Aut}^T(M)$ (cf. \cite[Proposition 2.2]{Cho-Futaki-Ono}).

There is also a definition of Futaki invariant for general Sasaki metrics without assumption of $\omega_g^T\in{\frac{\pi}{n+1}}c_1^B(M)$. We refer the reader to \cite[Sect. 5]{Boyer-Galicki-Simanca}.

\section{Sasaki manifolds with group structure}\label{2.4}

In this section, we introduce \emph{$G$-Sasaki manifolds}.
Let $G$ be a complex, connected, reductive group of complex dimension $(n+1)$, which is the complexification of a  maximal compact subgroup $K$ as before. Assume that the centre $\mathfrak z(\mathfrak k)$ of Lie algebra of $K$ is nontrivial.

\begin{defi}\label{definition}
A \emph{$G$-Sasaki manifold} $(M,g, \xi)$ is a $(2n+1)$-dimensional Sasaki manifold with a holomorphic $G\times G$-action on $C(M)$ such that the following properties are satisfied:
\begin{itemize}
\item[(1)] There is an open and dense orbit $\mathcal O$ in $C(M)$ which is isomorphic to $G$ as a $G\times G$-homogeneous space (we will identify it with $G$);
\item[(2)] The $K\times K$-action preserves $\rho$ invariant;
\item[(3)] $\xi\in\mathfrak z(\mathfrak k).$
\end{itemize}
\end{defi}

By (\ref{0201}), the conditions (2) and (3) imply that the group $K\times K$  acts on $M$ and preserves its Sasaki structure $(g,~\xi,~\eta,~\Phi)$ invariant. Clearly, if we take $G$ an $(n+1)$-dimensional complex torus $T^c$,
then $M$ is a $(2n+1)$-dimensional toric Sasaki manifold (cf. \cite{Futaki-Ono-Wang}). We will discuss more  examples of $G$-Sasaki manifolds
 in Section 8 in the end of this paper.

Let $Z$ be the closure of $T^c$ in $C(M)$. By \cite{AB1, AB2}, $Z$ is a toric manifold. Since $\xi\in\mathfrak z(\mathfrak k)\subset\mathfrak t$, we have $\rho{\frac{\partial}{\partial \rho}}=-J\xi\in \mathfrak a$,  and so $Z$ is a K\"ahler cone over $Z\cap M$.
This implies that $Z\cap M$ is a toric Sasaki manifold with Sasaki structure $(g|_{Z\cap M},\xi|_{Z\cap M},\eta|_{Z\cap M},$
\newline $\Phi|_{Z\cap M})$ \cite{Futaki-Ono-Wang, Martelli-Sparks-Yau, Boyer}.
As in \cite{AB1, AB2, AK}, the structure of $G$-K\"ahler manifold (a polarized $G$-group compactification) is determined by its toric submanifold, the structure of $G$-Sasaki manifold $(M,g)$ will be determined by its toric Sasaki submanifold $Z\cap M$. In fact, we have

\begin{prop}\label{g-sasaki-structure}
Let $(\hat M,\omega)$ be a $K\times K$-invariant K\"ahler manifold with holomorphic $G\times G$-action which satisfies (1) in Definition \ref{definition}. Let $Z$ be the closure of $T^c$ in $\hat M$. Suppose that $(Z,\omega|_Z)$ is the K\"ahler cone over some toric Sasaki manifold $M_Z$ such that the Reeb vector field $\xi$ satisfies (3). Then $\hat M$ is a K\"ahler cone over some $G$-Sasaki manifold $M$.
\end{prop}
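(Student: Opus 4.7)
The plan is to transport the cone structure on $Z$ to all of $\hat M$ using $K\times K$-invariance together with the generalised Cartan (KAK) decomposition $G=KAK$, and then to identify $M=\{\rho=1\}$ as the Sasaki base.

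First I would extract from the hypothesis a radial function $\rho_Z\in C^\infty(Z;\mathbb R_+)$ satisfying $\omega|_Z=\tfrac{\sqrt{-1}}{2}\partial\bar\partial\rho_Z^2$ and $-J\xi=\rho_Z\partial_{\rho_Z}$ on $Z$. Because $\omega$ is $K\times K$-invariant and $N_K(T)$ normalises $T^c$ with quotient the Weyl group $W$, the potential $\rho_Z^2$ is automatically $W$-invariant on $T^c$; this is the crucial compatibility that permits an extension. I would then define $\rho$ on the open orbit $\mathcal O\cong G$ by $\rho(k_1ak_2):=\rho_Z(a)$ via KAK (well defined by $W$-invariance, smooth on the principal stratum), and extend by $K\times K$-invariance to all of $\hat M$ using $(K\times K)\cdot Z=\hat M$.

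Next I would verify the global identity $\omega=\tfrac{\sqrt{-1}}{2}\partial\bar\partial\rho^2$. Both sides are smooth $K\times K$-invariant $(1,1)$-forms on $\hat M$ which coincide on the toric slice $Z$; by the standard polar-slice reduction for spherical $K\times K$-actions, agreement on $Z$ propagates to all of $\hat M$. Since $\xi\in\mathfrak z(\mathfrak k)$, the holomorphic vector field $-J\xi$ is globally defined on $\hat M$ and commutes with the $K\times K$-action; its flow then realises a diffeomorphism $\hat M\cong M\times\mathbb R_+$ with $\rho$ the second coordinate and $M:=\rho^{-1}(1)$. The induced metric makes $M$ Sasaki with Reeb field $\xi$, and the three conditions of Definition \ref{definition} descend from $(\hat M,\omega)$ to $M$ by construction.

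The principal obstacle is the smoothness and compatibility check across the non-principal $G\times G$-orbits in $\hat M$, where KAK degenerates and one must ensure both that $\rho$ extends smoothly and that the identity $\omega=\tfrac{\sqrt{-1}}{2}\partial\bar\partial\rho^2$ continues to hold at boundary strata. I expect this to be handled by the Alexeev--Brion local normal form at closed $G\times G$-orbits in equivariant compactifications, combined with a density argument: on the dense open orbit $\mathcal O$ the equalities hold by construction, and smoothness of $\omega$ and $\rho$ together with the closedness of the coincidence locus forces the identity on all of $\hat M$.
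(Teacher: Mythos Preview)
Your overall strategy---extend $\rho$ from $Z$ to $\hat M$ by $K\times K$-invariance, verify the global potential identity $\omega=\tfrac{\sqrt{-1}}{2}\partial\bar\partial\rho^2$, then deduce the cone structure---is exactly the paper's route for the first two steps. The paper cites \cite[Proposition~3.2]{AK} for the potential identity (your ``polar-slice reduction''), so your worry about smoothness across non-principal orbits is already absorbed into that reference; you are over-investing there.

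The genuine gap is in your third step. You assert that the flow of $-J\xi$ ``realises a diffeomorphism $\hat M\cong M\times\mathbb R_+$ with $\rho$ the second coordinate'' and that ``the induced metric makes $M$ Sasaki''. A diffeomorphism transverse to the level sets of $\rho$ is \emph{not} enough to conclude that $\bar g=d\rho^2+\rho^2 g$: you must know that $-J\xi$ is actually the Euler field $\rho\,\partial_\rho$, equivalently that $\omega(\xi,X)=0$ for all $X$ tangent to $\{\rho=1\}$. You never verify this. The paper does it explicitly: invoking the Martelli--Sparks--Yau criterion \cite[Sect.~2.1]{Martelli-Sparks-Yau 2}, it reduces to checking $\omega(\xi,X)=0$ on $M\cap\mathcal O$, moves a point to $Z\cap M$ via KAK, and then splits $T_p\hat M$ into the toric directions (where the toric cone hypothesis gives the vanishing) and the root directions (where the block-diagonal Hessian formula of Lemma~\ref{Hessian} gives it, since $\xi\in\mathfrak z(\mathfrak k)$ lies in the toric block).

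Your argument can be repaired along your own lines: observe that $(-J\xi)\rho$ and $\rho$ are both $K\times K$-invariant and agree on $Z$, hence agree everywhere; combined with $\xi(\rho)=0$, a short computation with $\iota_\xi\omega$ then yields $\omega(\xi,TM)=0$. But as written, this step is missing, and it is precisely the content the paper supplies via Lemma~\ref{Hessian}.
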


The proof of Proposition \ref{g-sasaki-structure} depends on the KAK-decomposition of the reductive group $G$ \cite[Sect. 7.3]{Kna}.
Let us choose a basis of right-invariant vector fields $\{E_1,...,E_{n+1}\}$ on $G\subset C(M)$ such that $\{E_1,...,E_{r+1}\}$ spans $\mathfrak t^c$, where $(r+1)$ is the dimension of $T^c$ (cf. \cite[Sect. 1]{Del2}). Denote the set of positive roots by $R_G^+$ with roots $\{\alpha_i\}_{i=1,\ldots,{\frac{n-r}2}}$. For each $\alpha=\alpha_i$, we set
$M_{\alpha}$
\[
M_{\alpha}(x) = \frac{1}{2}\left<\alpha , \nabla \psi(x)\right>
\begin{pmatrix}
\coth\alpha( x) & \sqrt{-1} \\
-\sqrt{-1} & \coth\alpha( x) \\
\end{pmatrix},~ x\in \mathfrak{a}_+,
\]
where $\mathfrak{a}_+=\{ x\in  \mathfrak{a}|~\alpha(x)>0,~\forall~\alpha\in R_G^+\}$  is the  positive Weyl chamber of $\mathfrak a$.

Let $W$ be the Weyl group of $(G, T^c)$.  The following lemma gives a formula of complex Hessian for $K\times K$-invariant functions on $G$ due to \cite{Del2}.

\begin{lem}\label{Hessian}
Any $K\times K$-invariant function $\psi$ on $G$ can descend to a $W$-invariant function (still denoted by $\psi$) on $\mathfrak a$. Moreover, there are local holomorphic coordinates on $G$ such that for $x\in \mathfrak{a}_+$,
the complex Hessian matrix of $\psi$ is diagonal by blocks as follows,
\begin{equation}\label{+21}
\mathrm{Hess}_{\mathbb{C}}(\psi)(\exp(x)) =
\begin{pmatrix}
\frac{1}{4}\mathrm{Hess}_{\mathbb{R}}(\psi)(x)& 0 & & & 0 \\
0 & M_{\alpha_1}(x) & & & 0 \\
0 & 0 & \ddots & & \vdots \\
\vdots & \vdots & & \ddots & 0\\
0 & 0 & & & M_{\alpha_p}(x)\\
\end{pmatrix}.
\end{equation}

\end{lem}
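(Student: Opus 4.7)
The plan is to prove the lemma in two stages: first, a descent statement via the KAK decomposition; second, a block-diagonal computation of the complex Hessian in holomorphic coordinates adapted to the root space decomposition.

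For the descent, I would invoke the KAK decomposition $G=K\exp(\overline{\mathfrak a_+})K$ of the reductive group, which shows that a $K\times K$-invariant function $\psi$ on $G$ is completely determined by its restriction to $\exp(\mathfrak a)$. Viewing this restriction as a function on $\mathfrak a$, the Weyl group $W$ acts on $\mathfrak a$ by conjugation via lifts in $K$, and any two $W$-equivalent points of $\mathfrak a$ lie in the same $K\times K$-orbit inside $\exp(\mathfrak a)$. The descended function is therefore automatically $W$-invariant, giving the first assertion.

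For the Hessian, the plan is to fix $x_0\in\mathfrak a_+$, set $a=\exp(x_0)$, and use the right-invariant frame $\{E_1,\ldots,E_{n+1}\}$ split so that $\{E_1,\ldots,E_{r+1}\}$ spans $\mathfrak t^c$ while the remaining generators are paired as $E_{\pm\alpha}\in\mathfrak g_{\pm\alpha}$, one pair per $\alpha\in R_G^+$. Exponentiation then yields local holomorphic coordinates $(z_j,w_\alpha^{\pm})$ near $a$, of total complex dimension $(r+1)+2\cdot\tfrac{n-r}{2}=n+1$. The block-diagonal structure of $\mathrm{Hess}_{\mathbb C}\psi$ should follow from the fact that $T^c$ acts on $\mathfrak g$ with distinct weights on distinct root spaces: averaging $K\times K$-invariance over the maximal torus forces the mixed second partials between different root blocks, and between any root block and the Cartan block, to vanish. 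The Cartan block is then immediate, since on $\mathfrak t^c$ the function $\psi\circ\exp$ depends only on the real part and the Wirtinger relation $\partial_{z_j}\partial_{\bar z_k}=\tfrac14(\partial_{x_j}\partial_{x_k}+\partial_{y_j}\partial_{y_k})$ collapses to $\tfrac14\mathrm{Hess}_{\mathbb R}\psi(x_0)$.

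The computational core, and the main obstacle, is the explicit form of each root block $M_\alpha(x_0)$. I would parametrize a surface $g(s,t)=a\exp(sE_\alpha+tE_{-\alpha})$ for small $s,t\in\mathbb C$ and expand its KAK decomposition $g(s,t)=k_1(s,t)\exp(x(s,t))k_2(s,t)$ to second order. Since $\mathrm{Ad}(\exp(-x_0))$ acts on $\mathfrak g_{\pm\alpha}$ by $e^{\mp\alpha(x_0)}$, the component of $x(s,t)$ along the $\alpha$-direction picks up the factor $\coth\alpha(x_0)$ once the $K$-factors absorb the pieces transverse to $\mathfrak a$, while the mismatch between the natural complex structure on $\mathfrak g_\alpha\oplus\mathfrak g_{-\alpha}$ and the compact real form produces the off-diagonal $\pm\sqrt{-1}$ entries. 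Substituting $\psi(g(s,t))=F(x(s,t))$, where $F$ denotes the descended $W$-invariant function on $\mathfrak a$, and differentiating at $(s,t)=(0,0)$ should yield $M_\alpha(x_0)$ exactly as stated. The delicate point is tracking, to second order, the interplay between the KAK projection and the complex structure on the root spaces, which amounts to an $\mathfrak{sl}_2$-level calculation; this computation appears in \cite{Del2}, and the plan is to carry it out by following that reference.
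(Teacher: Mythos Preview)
The paper does not supply its own proof of this lemma; it simply attributes the result to Delcroix \cite{Del2} and uses it as a black box. Your outline is exactly the Delcroix approach---KAK decomposition for the descent, root-space-adapted holomorphic coordinates, torus-averaging to kill the off-block entries, and the $\mathfrak{sl}_2$-level expansion for each $M_\alpha$---and you yourself note that the core computation is to be carried out by following \cite{Del2}. So your proposal is correct and aligned with what the paper invokes; there is nothing further to compare.
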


We apply Lemma \ref{Hessian} to prove Proposition \ref{g-sasaki-structure}.

\begin{proof}[Proof of Proposition \ref{g-sasaki-structure}]
Since $(Z,\omega|_Z)$ is a K\"ahler cone manifold, there is a smooth function $\rho$ on $Z$ such that
$$\omega|_Z=\frac{\sqrt{-1}}{2}\partial\bar\partial\rho^2$$
as in (\ref{0201}). Then $\rho$ can be extended to a smooth $K\times K$-invariant function on $\hat M$ (still denoted by $\rho$). Note that $\omega$ is a $K\times K$-invariant K\"ahler metric by the assumption. Thus $\omega=\frac{\sqrt{-1}}{2}\partial\bar\partial\rho^2$ on $\hat M$ (cf. \cite[Proposition 3.2)]{AK}). Let $M$ be the level set $\{\rho=1\}$ in $\hat M$. Then $M_Z=M\cap Z$. We need to prove that $\omega$ is a cone metric over $M$. By \cite[Sect. 2.1]{Martelli-Sparks-Yau 2}, it is equivalent to
show that
\begin{eqnarray}\label{cone metric}
\omega\left(\xi,X\right)=0,~\forall X\in TM.
\end{eqnarray}
It suffices to check (\ref{cone metric}) on $M\cap \mathcal O$. Note that (\ref{cone metric}) is true for any $X\in TZ$.

By the KAK-decomposition, for any $p\in M\cap \mathcal O$, there exists $k_1,k_2\in K$ such that $p'=(k_1,k_2)p\in Z$. Moreover, $p'\in M$ since $\rho(p')= \rho(p)=1$. Then by the $K\times K$-invariance of $\omega$, it holds
\begin{eqnarray*}
\omega (\xi,X)|_p=\omega (\xi,(k_1^{-1},k_2^{-1})_*X)|_{p'}, ~\forall~ X\in T_pM.
\end{eqnarray*}
We need to check (\ref{cone metric}) in the following two cases:\\
\emph{Case 1}, $(k_1^{-1},k_2^{-1})_*X\in {\rm Span}$
$\{E_{r+2},...,E_{n+1}\}$. Applying $\rho^2$ to $\psi$ in Lemma \ref{Hessian}, we see that (\ref{+21}) implies (\ref{cone metric}) since $\xi\in\mathfrak z(\mathfrak k)\subset\mathfrak t$.\\
\emph{Case 2}, $(k_1^{-1},k_2^{-1})_*X\in T_{p'}Z$. Then $(k_1^{-1},k_2^{-1})_*X$ must lie in $T_{p'}M_Z$ since $M$ is $K\times K$-invariant. Thus
$$\omega(\xi,(k_1^{-1},k_2^{-1})_*X)|_{p'}=\omega (\xi, TZ)|_{p'}= 0.$$
(\ref{cone metric}) is also true.

\end{proof}

Let $\{e^{t\xi}\}_{t\in\mathbb R}$ be the one-parameter group generated by $\xi$. We call a Sasaki manifold \emph{quasi-regular} if any orbit generated by $e^{t\xi}$ is closed. Otherwise, it is called \emph{irregular}. If the action $e^{t\xi}$ is in addition free, a quasi-regular Sasaki manifold is further called \emph{regular} (cf. \cite{Boyer, Futaki-Ono-Wang}). We note that the regularity property of $M$ is also determined by the toric Sasaki submanifold $Z\cap M$. In fact, this follows from a result of Alexeev and Brion \cite[Theorem 4.8]{AB1}: For any $p\in \hat M$, there exists $g_1,g_2\in G$ such that $p''=(g_1,g_2)p\in Z$. Then, for any $p\in M$ there is a $\rho_p\in\mathbb R$ such that $p'=e^{\rho_pJ\xi}p''\in Z\cap M$. Since both $e^{t\xi}$ and $e^{tJ\xi}$ commute with the action of $(g_1,g_2)$ by (3) of Definition \ref{definition},
\begin{eqnarray*}
e^{t\xi}p=(g_1^{-1},g_2^{-1})e^{-\rho_pJ\xi}e^{t\xi}p',~\forall t~\in\mathbb R.
\end{eqnarray*}
This means that the orbits of $p$ and $p'$ generated by $e^{t\xi}$ are isomorphic. Hence, $Z\cap M$ is regular (or quasi-regular, irregular) implies that $M$ is regular (or quasi-regular, irregular).

In the remainder  of this section, we discuss the moment map $\mu_Z$ of $(Z,\bar g|_Z)$. It is known that the image of $\mu$ is a cone minus the origin in $\mathbb R^{r+1}\cong \mathfrak a^*$ (cf. \cite{Lerman, Futaki-Ono-Wang}). Denote this cone by
\begin{equation}\label{cone def}
\mathfrak C=\bigcap_{A=1}^d\{y\in\mathfrak a^*|~ l_A(y)=u_A^iy_i\geq0\}.
\end{equation}
Without loss of  generality, we may assume that this set of $\{u_A\}$ is \emph{minimal}, which means that $\mathfrak C$ will be changed if removing any $u_A$ in (\ref{cone def}).
Since $Z\cap M$ is smooth, the cone $\mathfrak C$ is \emph{good} in sense of \cite{Lerman} (cf. \cite[Sec. 2]{Martelli-Sparks-Yau}). Namely, $\mathfrak C$ satisfies:
\begin{itemize}
\item[(C1)] Each $u_A=(u_A^1,...,u_A^{r+1})$ is a prime vector in the lattice of one-parameter groups $\mathfrak N$;
\item[(C2)] Each codimension $N$ face $\mathfrak F\subset \mathfrak C$ can be realized uniquely as the intersection of some facets $\mathfrak F_A=\{y| ~l_A(y)=0\}, A\in\{1,...,N\}\subset\{1,...,d\}$ and $$\text{Span}_{\mathbb R}\{u_1,...,u_N\}\cap\mathfrak N=\text{Span}_{\mathbb Z}\{u_1,...,u_N\}.$$
\end{itemize}

Let
\begin{eqnarray}\label{l-infty}
l_{\infty}(y)=\sum_Al_A(y).
\end{eqnarray}
Set
\begin{eqnarray}\label{0210+}
U_0^{\xi}(y)={\frac12}\sum_Al_A(y)\log l_A(y) + \frac{1}{2}l_\xi(y)\log l_\xi(y)-{\frac12}l_{\infty}(y)\log l_{\infty}(y).
\end{eqnarray}
$U_0^{\xi}(y)$ is usually called Guillimin's function on $\mathfrak C\setminus \{O\}$ \cite{Gui}. Then the Legendre function $\hat F_0$ of $U_0^{\xi}$ defined by
\begin{eqnarray}\label{0003.3+}
\hat F_0(x)=y_i\frac{\partial U_0^{\xi}}{\partial y^i}-U_0^{\xi}
\end{eqnarray}
is a K\"ahler potential on $Z$ \cite{Gui}, where
$$\frac{\partial U_0^{\xi}}{\partial y^i}=x^i:~ (y^1,..., y^{r+1})\to (x^1,...,x^{r+1})$$
is a diffeomorphism from $\mathfrak C\setminus\{ O\}$ to $\mathbb R^{r+1}$.
Conversely, for any toric cone metric with K\"ahler potential $F$ on $Z=C(Z\cap M)$, one can define a
symplectic potential $U$ of $Z$ on $\mathfrak C\setminus \{O\}$ by the Legendre transformation,
$$U(y)=x_i \frac{\partial F}{\partial x^i}-F.$$
As a version of Abreu's result for toric cone metrics, the following proposition was proved in \cite{Martelli-Sparks-Yau}.

\begin{prop}\label{sym potential}
Any symplectic potential $U$ on $Z$ associated to a K\"ahler cone metric with the Reeb vector $\xi$ can be written as
\begin{equation}\label{0211}
U=U_0^{\xi}+U',
\end{equation}
where $U'$ is a smooth homogenous function of degree $1$ on $\mathfrak C\setminus \{O\}$ such that $U$ is strictly convex.
\end{prop}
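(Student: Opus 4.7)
The plan is to identify the singular structure of an arbitrary symplectic potential $U$ both at the boundary facets of $\mathfrak{C}\setminus\{O\}$ and at the apex $O$, and to verify that these singularities are exactly captured by the explicit model $U_0^\xi$. Then $U' := U - U_0^\xi$ will automatically be smooth on $\mathfrak{C}\setminus\{O\}$; the degree-one homogeneity of $U'$ will follow from the K\"ahler cone condition, and strict convexity of $U$ is inherited from its being a symplectic potential of a positive-definite K\"ahler metric.

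First I would handle the boundary of $\mathfrak{C}$. Since $Z\cap M$ is a smooth toric Sasaki manifold and $\mathfrak{C}$ satisfies the good cone conditions (C1)--(C2), the standard Abreu-Guillemin-Lerman analysis implies that near the relative interior of each facet $\mathfrak{F}_A$ the potential $U$ has a local expansion of the form $\tfrac{1}{2} l_A \log l_A + h_A$ with $h_A$ smooth. An induction on the codimension of faces, choosing at each face a local $\mathrm{SL}(r+1,\mathbb{Z})$ frame adapted to the primitive generators $u_{A_1},\dots,u_{A_N}$ furnished by (C2), then shows that $U - \tfrac{1}{2}\sum_A l_A \log l_A$ extends smoothly across the whole boundary of $\mathfrak{C}\setminus\{O\}$.

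Next I would use the K\"ahler cone structure to control the behavior at $O$. Since $\omega = \tfrac{1}{2}\sqrt{-1}\partial\bar\partial\rho^2$ on $Z$ and $\xi = J\rho\,\partial/\partial\rho$, the radial flow scales the K\"ahler potential $F$ quadratically, which under the Legendre transform $y=\nabla F(x)$ is equivalent to a prescribed logarithmic asymptotic of $U$ along the Reeb moment ray: precisely the content of the remaining terms $\tfrac{1}{2} l_\xi \log l_\xi - \tfrac{1}{2} l_\infty \log l_\infty$ in $U_0^\xi$, which are smooth on $\mathfrak{C}\setminus\{O\}$ since $l_\xi,l_\infty>0$ there. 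Combining the two steps, $U' = U - U_0^\xi$ is smooth on $\mathfrak{C}\setminus\{O\}$, and applying the same cone-scaling argument to the identity $U = U_0^\xi + U'$ yields the claimed degree-one homogeneity of $U'$.

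Strict convexity of $U$ follows from the positivity of the K\"ahler metric via the Legendre relation $\mathrm{Hess}_y\, U = (\mathrm{Hess}_x\, F)^{-1}$, and is preserved under the splitting $U = U_0^\xi + U'$. The main obstacle I expect is the boundary regularity step: one must verify that the $l\log l$-singularities produced at facets meeting in high codimension combine correctly, without introducing extra singular terms, into the explicit model $U_0^\xi$, and this is where the good cone hypothesis (C2) plays an essential role.
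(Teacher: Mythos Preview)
The paper does not give its own proof of this proposition: it is stated with the attribution ``the following proposition was proved in \cite{Martelli-Sparks-Yau}'' and no argument is supplied. So there is no in-paper proof to compare against; what you have written is essentially a correct sketch of the standard Martelli--Sparks--Yau argument (an Abreu--Guillemin boundary analysis combined with the cone scaling), which is exactly what the paper is citing.

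Two minor points on your write-up. First, describing the role of the terms $\tfrac{1}{2}l_\xi\log l_\xi-\tfrac{1}{2}l_\infty\log l_\infty$ as controlling ``the behavior at $O$'' is slightly misleading: both $l_\xi$ and $l_\infty$ are strictly positive on $\mathfrak C\setminus\{O\}$, so these terms are smooth there and contribute nothing to boundary regularity. Their true job is to match scaling anomalies. Under $y\mapsto\lambda y$ the boundary part $\tfrac12\sum_A l_A\log l_A$ acquires the extra term $\tfrac12\lambda\log\lambda\cdot l_\infty(y)$, whereas any symplectic potential of a K\"ahler cone metric with Reeb vector $\xi$ acquires $\tfrac12\lambda\log\lambda\cdot l_\xi(y)$ (this is the Legendre dual of $\xi^iF_{,i}=2F$). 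The correction $\tfrac12 l_\xi\log l_\xi-\tfrac12 l_\infty\log l_\infty$ exactly repairs this mismatch, forcing $U'=U-U_0^\xi$ to be genuinely homogeneous of degree~$1$. You essentially say this, but it is worth separating clearly from any ``apex'' issue. Second, the phrase ``strict convexity \dots\ is preserved under the splitting'' is unnecessary: the proposition only claims $U$ itself is strictly convex, and as you note this is immediate from $\mathrm{Hess}_y U=(\mathrm{Hess}_x F)^{-1}$.
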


Since the cone metric $\bar g$ is $K\times K$-invariant in our case, $\mathfrak C$ is $W$-invariant \cite{AB2}. We will further assume that all $U'$ are $W$-invariant.

\section{$K\times K$-invariant metrics  in a transversely holomorphic orbit}

In this section, we reduce a $K\times K$-invariant Sasaki metric $g$ in a transversely holomorphic $n$-dimensional orbit. Let $\gamma\in\mathfrak a_z^*$ be a rational element such that $J\gamma(\xi)\not=0$. Set
$$\mathfrak k'=\ker\{J\gamma:\mathfrak k\to\mathbb R\}=\{\zeta\in \mathfrak k|~J\gamma(\zeta)=0 \}.$$
  Then $\mathfrak k'$ is a rational Lie subalgebra of $\mathfrak k$. It follows that the subgroup $K'$ generated by $\exp(\mathfrak k')$ is a closed codimension $1$ subgroup of $K$ \cite{Borel}.
Hence its complexification $H=(K')^c$ is a closed (complex) codimension $1$ reductive subgroup of $G$.
Since $\xi\in\mathfrak z(\mathfrak k)$, we see that $H\times H\subset {\rm Aut}^\xi(C(M))$.

Take a generic point $p\in M\cap\mathcal O$. Then its $H\times H$-orbit ${\rm Orb}_{C(M)}(p)$ is a complex submanifold of $C(M)$ and it is isomorphic to $H$ as a $H\times H$-homogenous space. By the isomorphism (\ref{action}), $H\times H$ can be identified with a subgroup of ${\rm Aut}^T(M)$, and ${\rm Orb}_M(p)=\pi\left({\rm Orb}_{C(M)}(p)\right)$ is its orbit of $p$ in $M$. Since $\xi\not\in \mathfrak k'$, $\xi\not\in T{\rm Orb}_M(p)$. Thus we can equip ${\rm Orb}_M(p)$ with the transverse complex structure $\Phi^T$ so that $\omega_g^T$ is a K\"ahler form on it. It can be shown that $\pi$ is a bi-holomorphic between ${\rm Orb}_{C(M)}(p)$ and ${\rm Orb}_M(p)$ (cf. \cite{Cho-Futaki-Ono, Futaki-Ono-Wang}).

We claim that $\pi$ is an isometry between $({\rm Orb}_{C(M)}(p), {\frac12}d\eta|_{{\rm Orb}_{C(M)}(p)})$ and
\newline $({\rm Orb}_M(p), \omega_g^T)$. This is because
\begin{eqnarray*}
{\pi}_*(X)=X-{\frac1{\rho^2}}\bar g\left(X,\frac{\partial}{\partial\rho}\right)\frac{\partial}{\partial\rho}, ~\forall~X\in TC(M),
\end{eqnarray*}
and by (\ref{d-eta}),
\begin{eqnarray*}
i_{\rho\frac{\partial}{\partial\rho}}d\eta
=-2\mathcal L_{\rho\frac{\partial}{\partial\rho}}(Jd\log\rho)=0.
\end{eqnarray*}
Thus for any $X, Y\in TC(M)$, we get
\begin{eqnarray}\begin{aligned}
\frac{1}{2} d\eta (X,Y)= \frac{1}{2} d\eta ({\pi_r}_*(X), {\pi_r}_*(Y))=\omega_{\bar g}({\pi_r}_*(X), {\pi_r}_*(Y))=\omega_g^T({\pi_r}_*(X), {\pi_r}_*(Y)).
\notag\end{aligned}\end{eqnarray}
This verifies the claim. Hence to study $\omega_g^T$ on $M$, it suffices to compute $\frac12d\eta$ on ${\rm Orb}_{C(M)}(p)$.

As in Section 3, we consider the closure $Z'$ of $(T')^c$-orbit ${\rm Orb}_{M}(p)$. Since $T'=\exp(\mathfrak t')$ is a maximal compact torus of $K'$, $Z'$ is just the torus orbit corresponding to $(T')^c$ in $Z$. By (\ref{d-eta}), we see that
$$\omega_g^T(p)=\sqrt{-1}\partial\overline \partial\log \rho (p). $$
Then by the above claim, we get
\begin{align}\label{potential-P}\omega_g^T|_{ (T')^c \subset Z'}=\sqrt{-1}\partial\overline \partial\log \rho( (T')^c(p)).
\end{align}
It implies that $\varphi((T')^c) =\log \rho|_{ (T')^c(p) }$ is a K\"ahler potential of the restriction of $\omega_g^T$ on the orbit $(T')^c \subset Z'$. Thus $\log \rho$ can be regarded as a convex function in $\mathbb R^r$ since $\omega_g^T|_{Z'}$ is $K'$-invariant. We shall compute the polytope of moment $\mu'$ associated to $\omega_g^T|_{Z'}$ with the action $(T')^c$ below.

\subsection{Moment polytope of $(Z', \omega_g^T|_{Z'})$}\label{4.2}

Let $\mu_Z$ be the moment map of $(Z,\bar g|_Z)$ as in Section 2.
Since $\rho=1$ on $M$, by a direct computation, the image of $Z\cap M$ under $\mu_Z$ is an intersection of $\mathfrak C$ with the \emph{characteristic hyperplane} $\{y| ~l_{\xi}(y)=\xi^iy_i=1\}$, which is a polytope $\mathcal P$ in $\mathfrak a^*$,
\begin{eqnarray}\label{eq P}
\mathcal P=\{y|~ l_\xi(y)=1, u_A^iy_i\geq0,~\forall A\}.
\end{eqnarray}
Thus $\mathfrak C$ is a cone over it. Since $M$ is compact, $\mathcal P$ must be bounded. Hence $\xi$ lies in the interior of the dual cone of $\mathfrak C$.

Let $\mathfrak a'=J\mathfrak k'$. Let $\iota:\mathfrak a'\to\mathfrak a$ be the inclusion and $\iota^*:\mathfrak a^*\to ({\mathfrak a'})^*$  its dual map.
Then $\gamma (\mathfrak a')=0$. It follows that
\begin{eqnarray}\label{0305}
\iota^*(y)=y-{\frac{\langle y, \gamma\rangle  }{\langle\gamma,\gamma\rangle}}\gamma,~\forall ~y\in\mathfrak a^*.
\end{eqnarray}
Thus we can identify $ ({\mathfrak a'})^*$ with the image of $\iota^*$ in $\mathfrak a^*$, which is a codimension 1 subspace orthogonal to $\gamma$. Let $P$ be the image of $\mu'$ in $({\mathfrak a'})^*$. The following proposition shows that $P$ is equal to $\iota^*(\mathcal P)$, and consequently $P$ depends only on the choice of $\gamma$.

\begin{prop}\label{polytope P}
$P$ is equal to $\iota^*(\mathcal P)$, which is a bounded, convex and $W$-invariant polytope. More precisely,
\begin{eqnarray}\label{P def}
P=\{v\in(\mathfrak a' )^*|~l'_A(v)\geq0,~A=1,...,d\},
\end{eqnarray}
where
\begin{eqnarray}\label{4.4+}
l'_A(v)=\left(u_A^i-{ \frac{\gamma(u_A)}{\gamma(\xi)} }\xi^i\right)v_i+{\frac{\gamma(u_A)}{\gamma(\xi)}}.
\end{eqnarray}
Furthermore, each codimension $N$ face of $P$ is exactly intersections of $N$ facets. In particular, each vertex of $P$ is exactly the intersection of $r$ facets.
\end{prop}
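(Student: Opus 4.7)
The plan is to identify the moment map $\mu'$ on $(Z',\omega_g^T|_{Z'})$ with the composition $\iota^*\circ\mu_Z|_{Z'}$, then show $\mu_Z(Z')=\mathcal P$, and finally derive (\ref{P def}) and the face structure from the fact that $\iota^*|_{\mathcal P}:\mathcal P\to P$ is an affine bijection. For the identification, the standard identities $\omega_g^T=\tfrac12 d\eta$ and $\omega_{\bar g}=\tfrac12 d(\rho^2\eta)$, together with the $T$-invariance of $\eta$ and $\rho$, give for any $v\in\mathfrak t'\subset\mathfrak t$ the Hamiltonians $\mu'_v=\tfrac12\eta(v)$ on $(Z',\omega_g^T|_{Z'})$ and $\mu_Z(v)=\tfrac12\rho^2\eta(v)$ on $(Z,\omega_{\bar g})$ (here $v$ denotes the induced vector field; the normalisation is fixed by $\mu_Z(\xi)\equiv 1$ on $M$, matching the characteristic hyperplane in (\ref{eq P})). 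Restricting to $M=\{\rho=1\}$ yields $\mu'_v=\mu_Z(v)|_M$ for all $v\in\mathfrak t'$, hence $\mu'=\iota^*\circ\mu_Z|_{Z'\subset M}$ and $P=\iota^*(\mu_Z(Z'))\subset\iota^*(\mathcal P)$.

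The heart of the argument is the reverse inclusion $\mu_Z(Z')\supset\mathcal P$, which is delicate because $Z'$ has real codimension one in $Z\cap M$ yet must cover the whole polytope. Introduce the auxiliary map $\phi:\mathfrak a\to\mathfrak C^\circ$, $\phi(x):=\mu_Z(\exp x)/\rho^2(\exp x)$, and observe that since $\exp(tJ\xi)$ acts on $Z$ by the radial scaling $\rho\mapsto e^{-t}\rho$ (with $\eta$ preserved), both $\rho^2(\exp\cdot)$ and $\mu_Z(\exp\cdot)$ scale by $e^{-2t}$ under $x\mapsto x+tJ\xi$, so $\phi(x+tJ\xi)=\phi(x)$. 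The identity $l_\xi(\mu_Z(\exp x))=\rho^2(\exp x)$ shows $\phi$ lands in $\mathcal P^\circ$, and a direct check (using that $\mu_Z\circ\exp:\mathfrak a\to\mathfrak C^\circ$ is the Legendre transform diffeomorphism of the strictly convex Sasaki cone potential) verifies that $\phi$ descends to a bijection $\bar\phi:\mathfrak a/\mathbb R J\xi\to\mathcal P^\circ$. The hypothesis $J\gamma(\xi)=\gamma(J\xi)\ne 0$ is exactly what guarantees $J\xi\notin\mathfrak a'=\ker\gamma$, producing the direct sum $\mathfrak a=\mathfrak a'\oplus\mathbb R J\xi$ and hence a bijection $\phi|_{\mathfrak a'}:\mathfrak a'\to\mathcal P^\circ$. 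For a generic base point $p=\pi(\exp x_0)\in Z\cap M$, decomposing $x_0=x_0'+t_0J\xi$ with $x_0'\in\mathfrak a'$, the $\mu_Z$-image of the open $(T')^c$-orbit through $p$ equals $\{\phi(x_0+x'):x'\in\mathfrak a'\}=\phi(x_0'+\mathfrak a')=\mathcal P^\circ$; closing up gives $\mu_Z(Z')=\mathcal P$, whence $P=\iota^*(\mathcal P)$.

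The remaining assertions reduce to elementary linear algebra. Since $\gamma(\xi)\ne 0$, the line $\mathbb R\gamma$ is transverse to $H=\{l_\xi=1\}\subset\mathfrak a^*$, so $\iota^*|_H:H\to(\mathfrak a')^*$ is an affine isomorphism; for $v\in(\mathfrak a')^*$ the unique preimage in $H$ is $y=v+\tfrac{1-v(\xi)}{\gamma(\xi)}\gamma$, and substituting into $l_A(y)=u_A^iy_i\ge 0$ and dividing by $\gamma(\xi)$ gives exactly the inequalities $l'_A(v)\ge 0$ of (\ref{4.4+}), establishing (\ref{P def}). Boundedness, convexity and $W$-invariance of $P$ transfer from $\mathcal P$ via linearity of $\iota^*$ (the $W$-invariance uses that $\xi\in\mathfrak z(\mathfrak k)$ and $\gamma\in\mathfrak a_z^*$ are $W$-fixed while $\{u_A\}$ is $W$-stable). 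For the face structure, the bijection $\iota^*|_{\mathcal P}$ sends the facet $\mathcal P\cap\{l_A=0\}$ to $P\cap\{l'_A=0\}$, and condition (C2) of the good cone $\mathfrak C$ — carried to $\mathcal P$ because $\xi$ lies in the interior of the dual cone so $H$ meets every face of $\mathfrak C$ transversally — implies that each codimension-$N$ face of $P$ is the intersection of exactly $N$ facets, and in particular each vertex is the intersection of $r=\dim P$ facets.

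The principal obstacle is precisely the surjectivity in the second paragraph: a single codimension-one orbit must exhaust the full polytope. The resolution lies in the dilation invariance of $\phi$ in the direction $J\xi$ together with the hypothesis $\gamma(J\xi)\ne 0$, which places this missing direction outside $\mathfrak a'$; without either ingredient the image of $\phi|_{\mathfrak a'}$ would be strictly smaller than $\mathcal P^\circ$ and the equality $P=\iota^*(\mathcal P)$ would fail.
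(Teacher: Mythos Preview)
Your proof is correct and follows essentially the same route as the paper's: both compute the moment map of $(Z',\omega_g^T|_{Z'})$ as $\mu'=\iota^*(y/l_\xi(y))$, invert $\iota^*|_{\{l_\xi=1\}}$ via $y=v+\tfrac{1-v(\xi)}{\gamma(\xi)}\gamma$ to obtain (\ref{4.4+}), and read off the face structure from condition (C2) for the good cone $\mathfrak C$.

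The one place where you go further than the paper is the surjectivity $\mu_Z(Z')=\mathcal P$. The paper writes down $\mu'=\iota^*(y)/l_\xi(y)$ and passes directly to $P=\{v=\iota^*(y):y\in\mathfrak C,\ l_\xi(y)=1\}$ without saying why the codimension-one slice $x_0+\mathfrak a'$ already produces every point of $\mathcal P^\circ$. Your argument via the dilation-invariant map $\phi=\mu_Z/\rho^2$ and the splitting $\mathfrak a=\mathfrak a'\oplus\mathbb R J\xi$ (from $\gamma(J\xi)\neq 0$) makes this explicit; the paper's homogeneity computation $2y_k\,\partial x^i/\partial y_k=\xi^i$ is exactly the ingredient needed, but the paper uses it only to derive $l_\xi(y)=\rho^2$ and leaves the surjectivity implicit. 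One small slip: in deriving (\ref{4.4+}) there is no ``dividing by $\gamma(\xi)$''; the substitution gives $l_A(\iota^{*-1}(v))=l'_A(v)$ directly.
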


\begin{proof}
Note that the inclusion $\iota:(\mathfrak t')^c\to\mathfrak t^c$ of Lie algebras induces a holomorphic embedding (still denoted by $\iota$) of toric manifold $Z'$ into $Z$.
Then for any holomorphic vector field $X$ on $Z'$, by (\ref{potential-P}), we have
\begin{eqnarray*}
i_{X}\left(\iota^*\frac12d\eta\right)&=&\iota^*\left(i_{\iota_*X}\left(\frac12d\eta\right)\right)\\
&=&{\sqrt{-1}}\iota^*(\bar\partial(\iota_*X(\log \rho))).
\end{eqnarray*}
Thus, by the definition of moment map, 
%\footnote{Usually, the moment map is given by the gradient map as
%\begin{eqnarray*}\mu'={\frac12}\iota^*\left(\nabla\log \rho\right).\end{eqnarray*}
%We delate the coefficient ${\frac12}$ to make it coincides with the contact moment map.}, 
it follows that
\begin{eqnarray*}
\mu'=\iota^*\left(\nabla\log\rho\right).
\end{eqnarray*}
On the other hand, by Proposition \ref{sym potential}, $U'$ is homogenous of degree $1$. Then
\begin{eqnarray}\label{0303+}
\begin{aligned}
2y_k{\frac{\partial x^i}{\partial y_k}}&=2y_k{\frac{\partial^2 U}{\partial y_k\partial y_i}}\notag\\
&=2y_k{\frac{\partial^2 U_0^{\xi}}{\partial y_k\partial y_i}}\notag\\
&=\xi^i.
\end{aligned}
\end{eqnarray}
Thus
\begin{eqnarray}\label{0306+}
l_{\xi}(y)=\xi^iy_i=\rho^2,
\end{eqnarray}
and so
$${\frac12}d\eta={\frac{\sqrt{-1}}{2}}\partial\bar\partial\log l_{\xi}(y). $$
As a consequence,
\begin{eqnarray*}
\mu'={\frac{\iota^*(y)}{l_{\xi}(y)}}.
\end{eqnarray*}
This means that
\begin{eqnarray}\label{0307}
P=\{v=\iota^*(y)|~y\in\mathfrak C,~l_{\xi}(y)=1\},
\end{eqnarray}
which is equivalent to $\iota^*(\mathcal P)$. In particular, $P$ is bounded, convex and $W$-invariant.

By (\ref{eq P}) and (\ref{0305}), it is easy to see that the inverse of $ \iota$ is given by
\begin{eqnarray}\label{0306}
\iota^{*-1}(v)=v+{\frac{1-v(\xi)}{\gamma(\xi)}}\gamma,~\forall v~\in\mathfrak {a'}^*.
\end{eqnarray}
Thus by (\ref{0307}), we obtain (\ref{P def}) immediately.

The second part in the proposition follows from the property of $\mathcal P$. In fact, according to (\ref{0307}), any codimension $N$ face of $\mathcal P$ is an intersection of a codimension $N$ face of $\mathfrak C$ with the {characteristic hyperplane} $\{l_{\xi}=1\}$. Then by the property (C2) of $\mathfrak C $ in Section \ref{2.4}, each codimension $N$ face of $\mathcal P$ is exactly intersections of $N$ facets.
\end{proof}

\subsection{Space of Legendre functions}

In this subsection, we determine the space of Legendre functions on $P$ associated to $K\times K$-invariant transverse  K\"ahler potentials of $\omega^T_\psi\in[\omega_g^T]_B$. %${\frac{\pi}{n+1}} c_1^B(M)$.
For convenience, we set the class of $K\times K$-invariant K\"ahler potentials of $(M, \frac{1}{2}d\eta)$ by $\mathcal H_{K\times K}\left(\frac{1}{2}\eta\right)$. Namely,
\begin{eqnarray*}
\mathcal H_{K\times K}\left(\frac{1}{2}d\eta\right)=\{\psi~ |\psi\in \mathcal H\left(\frac{1}{2}d\eta\right)| ~\psi~\text{ is }K\times K\text{-invariant}\}.
\end{eqnarray*}

Let $\hat F_0$ be the K\"ahler potential associated to the symplectic potential $U_0^{\xi}$ on $Z$ in (\ref{0003.3+}). Then
by Proposition \ref{g-sasaki-structure}, $\hat F_0$ extends to a function $\frac{1}{2}\rho_0^2$ on $C(M)$ which induces a
$G$-Sasaki manifold $(M,\frac{1}{2}d\eta_0)$ with
$$\frac{1}{2}d\eta_0=\sqrt{-1} \partial\bar\partial \log\rho_0.$$
By (\ref{potential-P}), $\frac{1}{2}\log\hat F_0$ is a K\"ahler potential on $Z'$. Let
$u_0$ be its Legendre function. Then
\begin{eqnarray}\label{0308}
u_0(x)&=&{\frac12}\left((\log \hat F_0)_{,i}x^i-\log \hat F_0\right)\notag\\
&=&{\frac12}\left({\frac{\hat F_{0,i}x^i}{\hat F_0}}-\log \hat F_0\right)\notag\\
&=&{\frac12}\left({\frac{U_0^{\xi}(\nabla \hat F_0)}{\hat F_0}}-\log \hat F_0+1\right).%~\forall x\in\mathfrak a_H.
\end{eqnarray}

On the other hand, by (\ref{0306+}), we have
$$2\hat F_0=l_\xi(y),$$
where $y=\nabla \hat F_0.$
Then by (\ref{0210+}), we get
\begin{eqnarray}\begin{aligned}
&{\frac{U_0^{\xi}(\nabla \hat F_0)}{\hat F_0}}-\log \hat F_0\notag\\
&=\sum_A{\frac{l_A(y)}{l_\xi(y)}}\log l_A(y)+\log l_{\xi}(y)-{\frac{l_\infty(y)}{l_\xi(y)}}\log l_{\infty}(y)\notag\\
&-\log l_{\xi}(y)+\log2\notag\\
&=\sum_A{\frac{l_A(y)}{l_\xi(y)}}\log {\frac{l_A(y)}{l_\xi(y)}}
+\sum_A{\frac{l_A(y)}{l_\xi(y)}}\log l_\xi(y)\notag\\
&-{\frac{l_\infty(y)}{l_\xi(y)}}\log {\frac{l_\infty(y)}{l_\xi(y)}}
-{\frac{l_\infty(y)}{l_\xi(y)}}\log{l_\xi(y)}+\log2.\notag
\end{aligned}\end{eqnarray}
Hence, by (\ref{l-infty}), it follows that
\begin{align}\label{f-0-u}
&{\frac{U_0^{\xi}(\nabla \hat F_0)}{\hat F_0}}-\log \hat F_0\notag\\
&
=\sum_A{\frac{l_A(y)}{l_\xi(y)}}\log {\frac{l_A(y)}{l_\xi(y)}}
-{\frac{l_\infty(y)}{l_\xi(y)}}\log {\frac{l_\infty(y)}{l_\xi(y)}}+\log2.
\end{align}

Let
\begin{eqnarray*}
v={\frac{\iota^*(\nabla \hat F_0)}{2\hat F_0}}
={\frac{\iota^*(y)}{\xi^iy_i}}.
\end{eqnarray*}
Then by (\ref{0306}),
\begin{eqnarray*}
{\frac{y}{\xi^iy_i}}
=v+{\frac{1-v(\xi)}{\gamma(\xi)}}\gamma.
\end{eqnarray*}
Thus by (\ref{4.4+}), we see that
\begin{eqnarray*}
{\frac{l_A(y)}{l_\xi(y)}}
=l'_A(v),
\end{eqnarray*}
and
\begin{eqnarray*}
{\frac{l_\infty(y)}{l_\xi(y)}}=\sum_Al_A'(v)=l'_\infty(v).
\end{eqnarray*}
Plugging (\ref{f-0-u}) and the above two equalities into (\ref{0308}), we derive
\begin{eqnarray}\label{u-0-function}
&&u_0(v)={\frac12}\sum_Al'_A(v)\log l'_A(v)-{\frac12}l'_\infty(v)\log l'_\infty(v)+\log2+{\frac12}.
\end{eqnarray}
Note that
$l'_\infty(v)$ has strictly positive lower bound on $\overline P$. Then
\begin{eqnarray*}
l_\infty'(v)\log l'_\infty(v)\in C^\infty(\overline P).
\end{eqnarray*}
Set
\begin{eqnarray}\label{u-g}
u_G={\frac12}\sum_Al'_A(v)\log l'_A(v).
\end{eqnarray}
We see that $u_0-u_G\in C^{\infty}(\overline P)$.

Set
\begin{eqnarray*}
\mathcal C_{P,W}=\{u|~u-u_G\in C^{\infty}(\overline{P}), u\text{ is strictly convex and $W$-invariant}\}.
\end{eqnarray*}
We prove

\begin{lem}\label{Legendre functions}
Let $\psi\in\mathcal H_{K\times K}\left(\frac{1}{2}d\eta\right)$ and $\varphi_\psi$ be the K\"ahler potential of $\omega_\psi^T\in[\omega_g^T]_B$.
Then the Legendre function $u_\psi$ of $\varphi_\psi$ belongs to $C_{P,W}$.
\end{lem}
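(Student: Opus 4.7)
The plan is to reproduce the explicit computation of $u_0$ in (\ref{0308})--(\ref{u-0-function}) for a general $\psi\in\mathcal H_{K\times K}\bigl(\tfrac{1}{2}d\eta\bigr)$, using the splitting of the toric symplectic potential supplied by Proposition \ref{sym potential}. As a first step, I would descend the data to the toric cone $Z$: let $\rho_\psi$ be the new cone radial function associated to $\omega_\psi^T$, so that on the open orbit $\varphi_\psi=\log\rho_\psi$. Since $\psi$ is $K\times K$-invariant, so is $\rho_\psi$, and by Lemma \ref{Hessian} it descends to a $W$-invariant function on $\mathfrak a$; restricting to $\mathfrak a'\subset\mathfrak a$ and using (\ref{potential-P}) on the $(T')^c$-orbit, $\varphi_\psi|_{\mathfrak a'}$ is precisely the K\"ahler potential whose Legendre transform on $P=\iota^*(\mathcal P)$ is $u_\psi$. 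On the toric side, $\hat F_\psi:=\tfrac{1}{2}\rho_\psi^2$ is the K\"ahler potential on $\mathfrak a$ with Legendre dual $U_\psi$ on $\mathfrak C$, and Proposition \ref{sym potential} gives $U_\psi=U_0^{\xi}+U'_\psi$ with $U'_\psi\in C^\infty(\overline{\mathfrak C}\setminus\{O\})$ homogeneous of degree one, and $W$-invariant by the $K\times K$-invariance of $\omega_\psi$.

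Next I would rerun the identities (\ref{0303+})--(\ref{u-0-function}) with $U_\psi$ in place of $U_0^{\xi}$. Two ingredients carry over cleanly: Euler's identity applied to the degree-one homogeneous $U'_\psi$ gives $y_k\,\partial^2 U'_\psi/(\partial y_k\partial y_i)=0$, so (\ref{0303+}) still produces $2\hat F_\psi=l_\xi(y)$; and a direct computation from the Guillemin form (\ref{0210+}) yields $y_i\,\partial U_0^{\xi}/\partial y_i=U_0^{\xi}+\tfrac{1}{2}l_\xi$, which combined with Euler for $U'_\psi$ gives $y_i x^i=U_\psi+\tfrac{1}{2}l_\xi$. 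Substituting these into the Legendre expression as in (\ref{0308}), splitting $U_\psi=U_0^{\xi}+U'_\psi$, and invoking the identity
\begin{align*}
\frac{U'_\psi(y)}{l_\xi(y)} \;=\; U'_\psi\!\left(\frac{y}{l_\xi(y)}\right) \;=\; U'_\psi\bigl(\iota^{*-1}(v)\bigr)
\end{align*}
(where the first equality uses degree-one homogeneity and the second uses (\ref{0306})), the chain (\ref{f-0-u})--(\ref{u-0-function}) is reproduced with a single extra summand,
\begin{align*}
u_\psi(v) \;=\; u_G(v) \;-\; \tfrac{1}{2}l'_\infty(v)\log l'_\infty(v) \;+\; U'_\psi\bigl(\iota^{*-1}(v)\bigr) \;+\; \mathrm{const}.
\end{align*}

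The three properties defining $\mathcal C_{P,W}$ then follow. For $u_\psi-u_G\in C^\infty(\overline P)$, note that $l'_\infty$ is bounded away from zero on $\overline P$ so $l'_\infty\log l'_\infty\in C^\infty(\overline P)$, and $\iota^{*-1}(\overline P)=\overline{\mathcal P}$ is a compact subset of $\overline{\mathfrak C}\setminus\{O\}$ on which $U'_\psi$ is smooth by Proposition \ref{sym potential}. Strict convexity of $u_\psi$ is inherited from the Legendre transform of $\log\rho_\psi|_{\mathfrak a'}$, which is strictly convex because $\omega_\psi^T>0$ via (\ref{+21}). Finally, $W$-invariance of $u_\psi$ follows from that of $\varphi_\psi$ together with the fact that $W$ acts trivially on $\mathfrak a_z$, hence fixes $\gamma\in\mathfrak a_z^*$ and preserves both $\mathfrak a'$ and $P$.

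The main obstacle is the bookkeeping in the second step: although the splitting $U_\psi=U_0^{\xi}+U'_\psi$ is additive, the Legendre transform is not, and one must verify that division by $l_\xi(y)$ interacts with the homogeneity of $U'_\psi$ to produce a genuinely smooth correction to $u_0$ on $\overline P$. This works exactly because the singular behaviour of $U_\psi$ at $\partial\mathfrak C$ is absorbed entirely into $U_0^{\xi}$ (and hence into $u_G$ after the Legendre reduction to $P$), while $U'_\psi$, smooth on all of $\overline{\mathfrak C}\setminus\{O\}$ by Proposition \ref{sym potential}, contributes a smooth term after composition with $\iota^{*-1}$.
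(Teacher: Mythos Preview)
Your proposal is correct and follows essentially the same route as the paper: decompose the cone symplectic potential via Proposition~\ref{sym potential} as $U_\psi=U_0^{\xi}+U'_\psi$, use degree-one homogeneity of $U'_\psi$ to preserve the identities \eqref{0303+}--\eqref{0306+}, and then rerun the Legendre computation \eqref{0308}--\eqref{u-0-function} to obtain $u_\psi=u_0+c\,U'_\psi(\iota^{*-1}(v))$ for a constant $c$, from which $u_\psi-u_G\in C^\infty(\overline P)$ follows. Your discussion of strict convexity and $W$-invariance is slightly more explicit than the paper's, which simply refers to the K\"ahler case; note also that the paper records the constant as $c=\tfrac12$ whereas your computation gives $c=1$---this is immaterial for the conclusion and in fact your bookkeeping via $F_\psi=l_\xi(y)/2$ appears to be the correct one.
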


\begin{proof}
Without loss of  generality, we may assume (cf. \cite[Proposition 4.2]{Futaki-Ono-Wang}),
\begin{eqnarray*}
F_\psi=e^{2\psi}\hat F_0.
\end{eqnarray*}
Then $\varphi_\psi={\frac12}\log \hat F_0+\psi$ is a toric K\"ahler potential of $\omega_\psi^T$. Since $\psi$ is basic,
$\xi(\psi)=0$.
Thus we get
\begin{eqnarray}\label{0306++}
\xi^i(e^{2\psi}\hat F_0)_{,i}=e^{2\psi}\xi^i\hat F_{0,i}
=2F_\psi.
\end{eqnarray}
Hence, if we set $y=\nabla F_\psi$, then
\begin{eqnarray*}
\xi^iy_i=2F_\psi.
\end{eqnarray*}

On the other hand, by (\ref{0211}), the Legendre function $U_\psi$ of $F_\psi$ can be written as
\begin{eqnarray*}
U_\psi=U_0^{\xi}+U'
\end{eqnarray*}
for some smooth, homogenous degree 1 function $U'$ on $\mathfrak C$. Then analogous to (\ref{0308}), the Legendre function of $\varphi_\psi={\frac12}\log F_\psi|_{\mathfrak a'}$, is given by
\begin{eqnarray*}
u_\psi(x)
&=&{\frac12}\left({\frac{U_\psi(\nabla F_\psi)}{F_\psi}}-\log F_\psi+1\right)\notag\\
&=&{\frac12}\left({\frac{U_0^{\xi}(y)}{l_\xi(y)}}-\log l_\xi(y)+\log2+1+U'\left(\frac{y}{l_\xi(y)}\right)\right).
\end{eqnarray*}
Similarly as in the proof of (\ref{u-0-function}), for $v=\iota^*\left(\frac{y}{l_\xi(y)}\right)$, we get
\begin{eqnarray*}
u_\psi(v)=u_0(v)+{\frac12}U'(v).
\end{eqnarray*}
The lemma then follows from the fact that $u_0-u_G\in C^{\infty}(\overline P)$.

The convexity and $W$-invariance of $\varphi_\psi$ follows exactly as in the K\"ahler case (cf. \cite{Del2, LZZ}).

\end{proof}

\section{The reduced K-energy $\mu(\cdot)$}

In this section, we compute the K-energy $\mathcal K(\cdot)$ on $\mathcal H_{K\times K}\left(\frac{1}{2}d\eta\right)$ on a Sasaki manifold $(M, \frac{1}{2}d\eta)$ in terms of Legendre functions on $P$ as in \cite{LZZ}. Recall that the average   $\bar S^T$ of transverse scalar curvature
  $S^T$ of $\frac{1}{2}d\eta$ is given by
\begin{eqnarray*}
\bar S^T={\frac{1}{V}}\int_MS^T( \frac{1}{2} d\eta)^n\wedge \eta,
\end{eqnarray*}
where
\begin{eqnarray*}
V=\int_M \left(\frac{1}{2} d\eta \right)^n\wedge \eta
\end{eqnarray*}
is the volume of $\frac{1}{2} d\eta=\omega_g^T$.
As same as $V$, $\bar S^T$ is independent of the choice of $ \frac{1}{2}d\eta_{\psi}$ with $\psi\in \mathcal H\left(\frac{1}{2}d\eta\right)$.
The K-energy on $(M, \frac{1}{2} d\eta)$ is introduced by Futaki-Ono-Wang as follows \cite{Futaki-Ono-Wang},
\begin{eqnarray}\label{K-energy}
\mathcal K(\psi)=-{\frac1V}\int_0^1\int_M\dot\psi_t(S_t^T-\bar S^T)(\frac{1}{2}d\eta_{\psi_t})^n\wedge \eta_{\psi_t}\wedge dt,
 \end{eqnarray}
for any  $\psi\in\mathcal H( \frac{1}{2} d\eta)$,  where $\{\psi_t\}_{t\in[0,1]}$ is any smooth path in $\mathcal H\left(\frac{1}{2}d\eta\right)$ joining $0$ and $\psi$, and $S^T_t$
is the transverse scalar curvature of $\frac{1}{2} d\eta_{\psi_t}$.

Let $dh$ be a Haar measure of $H$. Write the complex Monge-Amp\`ere operator measure on ${\rm Orb}_{M}(p)$, induced by $   \frac{1}{2} d\eta_\psi$ as
$$(\sqrt{-1}\partial\bar{\partial}\varphi_\psi)^n={\rm MA}_{\mathbb C}(\varphi_\psi)dh.$$
Note that $H$ and $G$ have the same roots system. Then
by Lemma \ref{Hessian}, we have
\begin{equation}\label{MA}
\mathrm{MA}_{\mathbb{C}}(\varphi_\psi)(\exp(x))= \frac{1}{4^{r+p}}
\mathrm{MA}_{\mathbb{R}}(\varphi_\psi)(x)\frac{1}{\mathbf J(x)}\prod_{\alpha \in R_G^+} \left<\alpha,\nabla \varphi_\psi(x)\right>^2,~\forall x\in\mathfrak a_+',
\end{equation}
where $\mathbf J(x)=\prod_{\alpha\in R_G^+}\sinh^2\alpha(x)$.

The following lemma gives a version of KAK-integration formula on a $G$-Sasak manifold.

\begin{lem}\label{KAK int}
Let $(M, \frac{1}{2} d\eta)$ be a $G$-Sasaki manifold with Reeb vector field $\xi$. Then there is a constant $C_0$ which depends only on $\xi$ and $H$ such that for any $K\times K$-invariant function $f$,
\begin{eqnarray}\label{kka-formula-sasaki}
\int_Mf(d\eta)^n\wedge\eta=C_0\int_{\mathfrak a_{+}'}f\mathbf \rm{MA_{\mathbb R}}(\varphi_0)\prod_{\alpha\in R_G^+}\langle\alpha,\nabla\varphi_0(x)\rangle^2dx, 
\end{eqnarray}
where $\varphi_0$ is  a transverse K\"ahler  potential of  $ \frac{1}{2} d\eta=\sqrt{-1}\partial\bar\partial\varphi_0$.
\end{lem}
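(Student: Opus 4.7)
The plan is to pull the $M$-integral back to the K\"ahler cone $C(M)$, apply the $G$-analogue of Delcroix's Hessian/KAK formula there, and then eliminate the radial direction. First, an elementary computation starting from $\omega_{\bar g}=\tfrac12 d(\rho^2\eta)=\rho\,d\rho\wedge\eta+\tfrac12\rho^2 d\eta$ together with $(d\rho\wedge\eta)^2=0$ gives
\[\omega_{\bar g}^{n+1}=\tfrac{n+1}{2^n}\rho^{2n+1}\,d\rho\wedge(d\eta)^n\wedge\eta.\]
Extending $f$ to $C(M)$ by $\rho$-invariance and integrating over the slab $\{1\le\rho\le e\}$ identifies $\int_{\{1\le\rho\le e\}}f\,\omega_{\bar g}^{n+1}$ with $\tfrac{n+1}{2^n}\bigl(\int_1^e\rho^{2n+1}d\rho\bigr)\int_M f(d\eta)^n\wedge\eta$.

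Next, the K\"ahler potential of $\omega_{\bar g}$ on the open dense orbit $\mathcal O\cong G\subset C(M)$ is $\Phi=\rho^2/2$, and the $G$-version of Lemma \ref{Hessian} (torus dimension $r+1$) combined with the Weyl integration formula on $G$ for $K\times K$-invariant functions yields
\[\int_{\{1\le\rho\le e\}}f\,\omega_{\bar g}^{n+1}=c_K\int_{\{x\in\mathfrak a_+\,:\,1\le\rho(\exp x)\le e\}}f(\exp x)\,\mathrm{MA}_{\mathbb R}(\Phi)(x)\prod_{\alpha\in R_G^+}\langle\alpha,\nabla\Phi(x)\rangle^2\,dx,\]
for a constant $c_K$ absorbing $\mathrm{Vol}(K)$ and Delcroix's $4^{-(r+1+p)}$ factor, where $p=|R_G^+|$.

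Now switch to the moment coordinate $y=\nabla\Phi\in\mathfrak a^*$ so that $\mathrm{MA}_{\mathbb R}(\Phi)\,dx=dy$; by \eqref{0306+}, $l_\xi(y)=\rho^2$. Introduce the polar-type change $y\mapsto(s,v)$ with $s=l_\xi(y)$ and $v=\iota^*(y)/s\in P$, so by \eqref{0306}, $y=s\bigl(v+\tfrac{1-v(\xi)}{\gamma(\xi)}\gamma\bigr)$. The decisive feature is that $\gamma\in\mathfrak a_z^*$ is orthogonal to every root of $G$, whence $\langle\alpha,y\rangle=s\langle\alpha,v\rangle$; a direct Schur-complement computation of the $(r+1)\times(r+1)$ Jacobian then yields $dy=\tfrac{s^r}{\gamma(\xi)}\,ds\,dv$. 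Since $f$ is $\rho$-invariant, $f=f(v)$, and the $s$-integral separates; using $r+2p=n$ and $2n+2=2(n+1)$, the factor $\int_1^{e^2}s^n\,ds$ exactly cancels the radial factor $\int_1^e\rho^{2n+1}d\rho$ from the first step.

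Finally, rewrite the remaining integral over $P$ as one over $\mathfrak a_+'$ by the Legendre transform: by \eqref{potential-P} the transverse K\"ahler potential of $\tfrac12 d\eta$ on the $H$-orbit through a generic point is $\varphi_0=\log\rho$, and as a $W$-invariant convex function on $\mathfrak a_+'$ its gradient $\nabla\varphi_0:\mathfrak a_+'\to P$ satisfies $\mathrm{MA}_{\mathbb R}(\varphi_0)\,dx=dv$ and $\langle\alpha,\nabla\varphi_0\rangle=\langle\alpha,v\rangle$, producing exactly the claimed identity. The main obstacle is constant-tracking so that $C_0$ depends only on $\xi$ and $H$; this ultimately rests on $J_0\equiv 1/\gamma(\xi)$ being a constant, itself a manifestation of the orthogonality $\gamma\perp R_G^+$ guaranteed by $\gamma\in\mathfrak a_z^*$ and the convention $\mathfrak a_{ss}\perp\mathfrak a_z$.
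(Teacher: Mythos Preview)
Your proof is correct but follows a genuinely different route from the paper's. The paper works directly on $M$: it observes that every point of $M\cap\mathcal O$ lies on a Reeb flow line meeting the $H$-orbit ${\rm Orb}_M(p)$, so $M\cap\mathcal O$ is (measure-theoretically) a product of ${\rm Orb}_M(p)$ with a Reeb orbit; writing $(\tfrac12 d\eta)^n\wedge\eta=(\omega_g^T)^n\wedge dx^0$ in adapted coordinates, the integral splits as a constant times $\int_{{\rm Orb}_M(p)}f(\omega_g^T)^n$, and then formula (\ref{MA}) is applied to $H$ (not $G$). A small case distinction is needed according to whether the Reeb orbits are closed, and if not an auxiliary $\xi'=\xi+\varsigma'$ generating a circle is chosen.

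Your approach instead lifts everything to the cone $C(M)$, applies the $G$-version of Lemma \ref{Hessian} and the Weyl integration formula on the full group $G$, and then removes the extra radial variable via the moment map and the polar change $(s,v)$. The advantage is that you avoid the compact/non-compact Reeb orbit dichotomy entirely and the constant $C_0$ becomes traceable (it visibly depends only on $\mathrm{Vol}(K)$, dimensional factors, and $1/\gamma(\xi)$). The paper's argument is shorter and more intrinsic to the Sasaki picture, but leaves $C_0$ implicit. One small remark: your claim that the $s$-integral ``exactly cancels'' the $\rho$-integral is slightly loose---$\int_1^{e^2}s^n\,ds$ and $\int_1^e\rho^{2n+1}\,d\rho$ differ by a factor of $2$---but of course this is absorbed into $C_0$ and does not affect the argument.
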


\begin{proof}
It suffices to do the integration on the open dense orbit $M\cap\mathcal O$. We claim that for any $q\in M\cap\mathcal O$, the flow line generated by $\xi$ through $q$ intersects $Orb_M(p)$. In fact, by using KAK-decomposition, we may assume $q\in Z\cap\mathcal O$ without loss of generality. The claim then follows from \cite[Proposition 7.2]{Futaki-Ono-Wang}.

Note that all $e^{t\xi}$-orbits in $M\cap\mathcal O$ are isomorphic to each other.  Then we have  two cases.\\
\emph{Case1.} The $e^{t\xi}$-orbits in $M\cap\mathcal O$ are all compact, so they can be parameterized by $S^1$. In this case, the integration can be taken first along each $e^{t\xi}$-orbit and then over $Orb_M(p)$. On the other hand, in the coordinates chosen in Section 1,
\begin{eqnarray*}
(\frac{1}{2}d\eta)^n\wedge\eta=(\omega_g^T)^n\wedge dx_{(\alpha)}^0.
\end{eqnarray*}
Since $f$ is $e^{t\xi}$-invariant, we have $f=f(z_{(\alpha)})$, which is a constant along each $e^{t\xi}$-orbit. Thus
\begin{eqnarray*}
\int_Mf(\frac{1}{2} d\eta)^n\wedge\eta=C_0\int_{Orb_M(p)}f(\omega_g^T|_{Orb_M(p)})^n,
\end{eqnarray*}
where $C_0$ is a constant independent of $f$.  By  (\ref{MA}),  we get (\ref{kka-formula-sasaki}).\\
\emph{Case 2.} The $e^{t\xi}$-orbits in $M\cap\mathcal O$ are non-compact. In this case, let $T_\xi$ be the closure of $e^{t\xi}$. It is a compact torus in $Z(K)$ whose dimension is at least $2$. Thus $\mathfrak t_\xi\cap\mathfrak k'\not=\emptyset$. Take an $\varsigma'\in\mathfrak t_\xi\cap\mathfrak k'$ such that $\xi'=\xi+\varsigma'$ generates a compact group and let $\theta'$ be the dual of $\xi'$. Then
\begin{eqnarray*}
(\frac{1}{2}d\eta)^n\wedge\eta=(\omega_g^T)^n\wedge\theta'.
\end{eqnarray*}
Since $f$ is also $e^{t\xi'}$-invariant,  (\ref{kka-formula-sasaki}) follows from the proof in
Case 1.
\end{proof}

For any $u\in\mathcal C_{P,W}$, we denote
\begin{eqnarray*}
u_{,i}={\frac{\partial u}{\partial v_i}},~u_{,ij}={\frac{\partial^2 u}{\partial v_i\partial v_j}}
\end{eqnarray*}
and $(u^{ij})$ the inverse matrix of $(u_{,ij})$. By Proposition \ref{polytope P}, near any point $p\in\partial P$, there exists local adapt coordinates introduced by \cite{D2}. That is, for any $p\in\partial P$, we can choose affine coordinates $\{v_i\}_{i=1,...,r}$ on $\mathbb R^r$ such that a neighbourhood of $p$ in $P$ is given by
$$v_1,...,v_N\geq0$$
for some $1\leq N\leq r$. Thus by (\ref{P def}) and \cite[Proposition 2]{D2} we have for any $u\in\mathcal C_{P,W}$,
\begin{eqnarray}\label{boundary behavior of u^{ij}}
u^{ij}\nu_{Ai} \to 0~\text{ and }~u^{ij}_{,j}\nu_{Ai} \to -\frac{2}{\lambda_A}\langle v, \nu_{ A}\rangle,
\end{eqnarray}
as $v$ goes to a facet $\mathfrak F'_A=\{v  \in(\mathfrak a' )^*|~l'_A(v)=0\}$ of $P_+=P\cap \mathfrak a_+^*$. Here
\begin{eqnarray*}
\lambda_A=\frac{\gamma( u_A)}{\gamma(\xi)},
\end{eqnarray*}
and $\nu_A$ denotes the unit outer normal vector of $\mathfrak F'_A$ .

Let
\begin{eqnarray}\label{constant-a}
\Lambda_A={\frac2{\lambda_A}}(1-2\sigma (u_A)), ~\forall~\mathfrak F_A\cap  \mathfrak a_+^*\neq \emptyset.
\end{eqnarray}
For any $u\in\mathcal C_{P,W}$, we define a functional $\mu(\cdot)$ by
\begin{align}\label{reduced-energy-formula}
&\mu(u) \notag\\
&={\frac{1}{V_P}}\sum_{\{A|~ \mathfrak F_A\cap  \mathfrak a_+^*\neq \emptyset\}}    \Lambda_A\int_{\mathfrak F'_A\cap P_+}u\left\langle v,\nu_A\right\rangle\pi\,d\sigma_0  \notag\\
&-\frac{\bar S}{V_P}\int_{P_+}u\pi\,dv-{\frac{4}{V_P}}\int_{P_+}\sigma(\nabla u)\pi\,dv\notag\\
&-{\frac{1}{V_P}}\int_{P_+}\log\det\left(u_{ij}\right)\pi\,dv
+{\frac{1}{V_P}}\int_{P_+}[\chi\left(\nabla u \right)+4\sigma(\nabla u) ]\pi\,dv,
\end{align}
where $\pi(v)=\prod_{\alpha\in R_G^+}\langle\alpha,v\rangle^2,\,\chi(x)=-\log\mathbf J(x)$ and $V_P=\int_{P_+}\pi\,dv$. 
%with  constant $C_0$  given in Lemma \ref{KAK int}.
The following proposition shows that K-energy is same to the functional $\mu(\cdot)$.

\begin{prop}\label{mu(u)}
\begin{align}\label{reduced-energy}
\mathcal K(\psi)=\mu(u_\psi) +const., ~\forall \psi\in\mathcal H_{K\times K}\left(\frac{1}{2} d\eta\right).
\end{align}

\end{prop}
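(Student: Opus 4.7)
The plan is to differentiate both sides of (\ref{reduced-energy}) along a smooth path $\{\psi_t\}_{t\in[0,1]}\subset\mathcal H_{K\times K}(\tfrac{1}{2}d\eta)$ with $\psi_0=0$ and $\psi_1=\psi$, and to verify that $\tfrac{d}{dt}\mu(u_{\psi_t})=\tfrac{d}{dt}\mathcal K(\psi_t)$ at each $t$. Once this is established, integrating in $t$ gives (\ref{reduced-energy}) with the constant being the value of $\mu(u_0)$. Using Lemma \ref{Legendre functions}, the Legendre duality between $\varphi_{\psi_t}$ and $u_t=u_{\psi_t}$ yields $\dot\psi_t(x)=-\dot u_t(v)$ at $v=\nabla\varphi_{\psi_t}(x)$, so both variations only need to be expressed as integrals of $\dot u_t$ against the same density on $P_+$.

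First I would reduce
$$\frac{d}{dt}\mathcal K(\psi_t)=-\frac{1}{V}\int_M\dot\psi_t(S^T_t-\bar S^T)(\tfrac{1}{2}d\eta_{\psi_t})^n\wedge\eta_{\psi_t}$$
to an integral on $\mathfrak a_+'$ via Lemma \ref{KAK int} and (\ref{MA}), and then push forward to $P_+$ along $v=\nabla\varphi_{\psi_t}(x)$. This converts $\mathrm{MA}_{\mathbb R}(\varphi_{\psi_t})\,dx$ into $dv$ and $\prod_\alpha\langle\alpha,\nabla\varphi_{\psi_t}\rangle^2$ into $\pi(v)$. The remaining factor $\mathbf J(x)^{-1}=e^{\chi(x)}$ at $x=\nabla u_t(v)$ furnishes the $\chi(\nabla u)$ term in (\ref{reduced-energy-formula}), while the linear asymptotic $\log\mathbf J(x)=4\sigma(x)+O(1)$ produces the compensating $4\sigma(\nabla u)$ summand needed so that the resulting density is integrable against $\pi(v)$ on $P_+$.

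Next, differentiating (\ref{reduced-energy-formula}) directly, the delicate term is the variation $\int_{P_+}u^{ij}\dot u_{,ij}\,\pi\,dv$ of the entropy piece. I would integrate this by parts twice: the first application kills the boundary by $u^{ij}\nu_{Ai}\to 0$ in (\ref{boundary behavior of u^{ij}}); the second produces a boundary integral on each facet $\mathfrak F'_A\cap\mathfrak a_+^*$ whose coefficient comes from $u^{ij}_{,j}\nu_{Ai}\to -2\langle v,\nu_A\rangle/\lambda_A$. Combining this with the variations of the $-4\sigma(\nabla u)$ and $\chi(\nabla u)$ terms yields exactly the facet coefficient $\Lambda_A=\tfrac{2}{\lambda_A}(1-2\sigma(u_A))$ from (\ref{constant-a}). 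The interior polytope density then matches by the toric identity $S^T\pi=-\partial_i\partial_j(\pi\,u^{ij})$, adapted to the $G$-setting via (\ref{MA}), together with the $\bar S^T$ pairing that cancels against $-\bar S^T/V_P$.

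The main obstacle is the boundary analysis on the non-Delzant polytope $P$: one has to use the adapted-coordinate formalism encoded in (\ref{boundary behavior of u^{ij}}) and carefully track how the Weyl weight $\pi(v)$ and its normal derivatives along each facet combine with the Ricci correction coming from $\chi$, so that the combinatorial constant $(1-2\sigma(u_A))$ in $\Lambda_A$ is recovered rather than some extra $v$-dependent correction. A secondary subtlety is that $\chi(\nabla u)$ diverges near the walls of $\mathfrak a_+^*$; this is compensated by the second-order vanishing of $\pi(v)$ there, but the integrability and the integration-by-parts identities must be justified by approximation on a compact exhaustion of the interior of $P_+$.
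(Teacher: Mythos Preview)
Your proposal is correct and follows essentially the same route as the paper: reduce the K-energy integral to $\mathfrak a'_+$ via the KAK integration formula (Lemma \ref{KAK int}), express $S^T_t$ in symplectic coordinates via the Abreu-type identity (the paper's (\ref{+S}), which is your ``toric identity adapted to the $G$-setting''), and then integrate by parts using the boundary behavior (\ref{boundary behavior of u^{ij}}) to produce the facet coefficients $\Lambda_A$. The only organizational difference is that the paper first writes down the closed formula for $S^T_t$ and substitutes, whereas you differentiate $\mu(u_t)$ directly and match densities; the underlying integration-by-parts computation is identical, and the paper likewise defers the details to \cite[Proposition 3.1]{LZZ}.
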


\begin{proof}
By Lemma \ref{KAK int}, we have
\begin{eqnarray}\label{0501+}
\mathcal K(\psi)=-{\frac {C_0}V}\int_0^1\int_{(\mathfrak a')_{+}}\dot\psi_t(S^T_t-\bar S^T)(\omega^T_t)^n\wedge dt.
\end{eqnarray}
On the other hand,
analogous to \cite[Lemma 2.4]{LZZ}, we see that
\begin{align}\label{+S}
S^T_t=&-u^{ij}_{t,ij}-2u^{ij}_{t,j}{\frac{\pi_{,i}}{\pi}}-u_t^{ij}{\frac{\pi_{,ij}}{\pi}} \notag\\
&-u_{t,ik}
\left.{\frac{\partial^2 \chi}{\partial x^i\partial x^k}}\right|_{x=\nabla{u_t}}-\left.{\frac{\partial \chi}{\partial x^i}}\right|_{x=\nabla{u_t}}{\frac{\pi_{,i}}{\pi}}.
\end{align}
%where $\chi(x)=-2\sum_{\alpha\in R^+}\log\sinh\alpha(x)$.
Consequently,
\begin{align}\label{average-r}\bar S^T={\frac 1{V_P}}\sum_{\{A|~ \mathfrak F_A\cap  \mathfrak a_+^*\neq \emptyset\}}   \Lambda_A\int_{\mathfrak F'_A\cap P_+}\langle v,\nu_A\rangle\pi~d\sigma_0.
\end{align}
Then substituting (\ref{+S}) and (\ref{average-r}) into (\ref{0501+}), and taking integration by parts together with (\ref{boundary behavior of u^{ij}}), we get
\begin{align} &{V_P}\cdot\mathcal K(\psi)\notag\\
&=\sum_{\{A|~ \mathfrak F_A\cap  \mathfrak a_+^*\neq \emptyset\}} {\Lambda_A}\int_{\mathfrak F'_A\cap P_+}u_\psi\left\langle v,\nu_A\right\rangle\pi\,d\sigma_0-\bar S\cdot\int_{P_+}u_\psi\pi\,dv\notag\\
&-\int_{P_+}\log\det\left(u_{\psi,ij}\right)\pi\,dv+\int_{P_+}[\chi\left(\nabla u_\psi\right)+4\sigma(\nabla u_\psi) ]\pi\,dv+const.\notag
\end{align}
Note that
\begin{eqnarray*}
V=\int_M (d\eta)^n\wedge\eta=C_0\cdot V_P.
\end{eqnarray*}
Thus (\ref{reduced-energy}) is true.
A detailed proof can be found in \cite[Proposition 3.1]{LZZ}.
\end{proof}

We call $\mu(\cdot)$ the \emph{reduced K-energy} of $\mathcal K(\cdot)$ as in \cite{D1, ZZ, LZZ}. By Proposition \ref{mu(u)}, $\mu(\cdot)$ is well-defined on $\mathcal C_{P,W}$. Note that the nonlinear part
$$-{\frac{1}{V_P}}\int_{P_+}\log\det\left(u_{ij}\right)\pi\,dv+{\frac{1}{V_P}}\int_{P_+}[\chi\left(\nabla u \right)+4\sigma(\nabla u)]\pi\,dv$$
is invariant by adding a linear function whcih depends only on $\mathfrak a'_z=\mathfrak a'\cap\mathfrak z(\mathfrak h)$.
We will use the Futaki invariant to normalize $u$ in $\mathcal C_{P,W}$.
By Proposition \ref{mu(u)}, we observe

\begin{lem}\label{Fut=0 and L=0}Let
\begin{align}
\mathcal L(u) &={\frac{1}{V_P}}\sum_{\{A|~ \mathfrak F_A\cap  \mathfrak a_+^*\neq \emptyset\}} \Lambda_A\int_{\mathfrak F'_A\cap P_+}u\left\langle v,\nu_A\right\rangle\pi\,d\sigma_0  \notag\\
&-\frac{\bar S}{V_P}\int_{P_+}u\pi\,dv-{\frac{4}{V_P}}\int_{P_+}\sigma(\nabla u)\pi\,dv.\notag
\end{align}
Then $M$ has vanishing Futaki invariant if and only if
\begin{eqnarray}\label{0507}
\mathcal L(a^iv_i)=0
\end{eqnarray}
for any $a=(a^i)$ in $\mathfrak a'_z$.
\end{lem}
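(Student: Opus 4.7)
The idea is that $\mathcal{L}$ is the first variation of $\mu$ at any $u\in\mathcal{C}_{P,W}$ along the direction $u\mapsto u+\epsilon\ell$, $\ell = a^iv_i$ with $a\in\mathfrak{a}'_z$, and that this variation matches (up to a nonzero constant) the Futaki invariant (\ref{Futaki inv}) of the holomorphic vector field generated by $a$. First I would differentiate $\mu(u+\epsilon\ell)$ in the formula (\ref{reduced-energy-formula}) term by term. The linear terms give exactly $\mathcal{L}(\ell)$. The $\log\det(u_{,ij})$ term contributes $-\int_{P_+}u^{ij}\ell_{,ij}\pi\,dv$, which vanishes because $\ell$ is linear. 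For the $\chi(\nabla u)+4\sigma(\nabla u)$ term, I would use that $H=(K')^c$ and $G$ share the same root system $R_G^+$ (since $K' = \ker(J\gamma)$ with $\gamma\in\mathfrak{a}_z^*$ kills only a central direction), so every $\alpha\in R_G^+$ annihilates $\mathfrak{a}'_z\subset\mathfrak{z}(\mathfrak{h})$. Consequently $\chi(\nabla u+\epsilon a) = \chi(\nabla u)$ and $\sigma(a) = \tfrac12\sum_{\alpha}\alpha(a) = 0$, giving zero first variation. Thus
\begin{equation*}
\frac{d}{d\epsilon}\bigg|_{\epsilon=0}\mu(u+\epsilon\ell) \;=\; \mathcal{L}(\ell).
\end{equation*}

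Next I would identify this variation on the K\"ahler side. Under the Legendre duality of Lemma \ref{Legendre functions}, adding $\epsilon\ell$ to the symplectic potential corresponds to translating the K\"ahler potential by $\epsilon a$ on $T^c$, which extends via the $G\times G$-action to pullback on $M$ by the one-parameter subgroup $\exp(\epsilon a)\subset H\subset{\rm Aut}^T(M)$. Its infinitesimal generator is a Hamiltonian holomorphic vector field $X\in\mathfrak{ham}(M)$ whose Hamiltonian $U_X=\sqrt{-1}\eta(X)$ matches, up to a nonzero normalizing constant, the linear function $\ell$ on the moment polytope $P$. By Proposition \ref{mu(u)} and the definition (\ref{K-energy}), together with integration by parts against the basic Ricci potential $h$ (see (\ref{0206}) in the Einstein-normalized case, or \cite[Sect.~5]{Boyer-Galicki-Simanca} in general), I then obtain $\mathcal{L}(\ell) = c\cdot{\rm Fut}(X)$ for some nonzero constant $c$.

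This establishes the forward implication. For the converse, I would use that ${\rm Fut}$ is a character on $\mathfrak{ham}(M)\cong\mathfrak{aut}^T(M)$: it vanishes on the semi-simple part automatically, so ${\rm Fut}\equiv 0$ reduces to checking vanishing on the center. The latter contains $\mathfrak{a}_z$, which decomposes as $\mathfrak{a}'_z$ together with one additional direction transversal to $\ker\gamma$ (nonempty since $J\gamma(\xi)\neq 0$); on that extra direction ${\rm Fut}$ vanishes automatically because $h$ is basic, hence annihilated by the Reeb flow. Thus vanishing of $\mathcal{L}$ on $\mathfrak{a}'_z$-linears propagates to ${\rm Fut}\equiv 0$ on all of $\mathfrak{ham}(M)$. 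The main obstacle is the Legendre-to-Hamiltonian identification in the second paragraph---specifically the precise matching $U_X\leftrightarrow\ell$ and the value of $c$---but this parallels the standard toric Sasaki computation of \cite{Futaki-Ono-Wang}.
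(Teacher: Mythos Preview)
Your approach is essentially the same as the paper's: both compute the derivative of the K-energy along the one-parameter subgroup generated by $a\in\mathfrak{a}'_z$ and identify it with $\mathcal{L}(a^iv_i)$ on one side and a multiple of ${\rm Fut}(X)$ on the other. The paper is slightly more economical: rather than differentiating $\mu(u+\epsilon\ell)$ term by term, it directly invokes $\mathcal{K}(\psi)=\mu(u_\psi)+\text{const.}$ (Proposition~\ref{mu(u)}) and cites \cite[Proposition~5.2]{Boyer-Galicki-Simanca} for the identity $\frac{d}{dt}\mathcal{K}(\psi_t)=-\frac{1}{V}{\rm Fut}(X)$, noting only that ``as in \cite{LZZ}'' the symplectic-side derivative equals $\mathcal{L}(a^iv_i+c)=\mathcal{L}(a^iv_i)$. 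Your explicit verification that the nonlinear part of $\mu$ has zero first variation (via $\alpha(a)=0$ for $a$ central, so $\chi$ and $\sigma$ are unaffected) is precisely the content hidden behind that citation. Your treatment of the converse---reducing to the center by the character property of ${\rm Fut}$ and then disposing of the Reeb direction via $\xi(h)=0$---is more detailed than the paper's, which simply writes ``combining the above two relations, we prove the lemma.'' So your plan is correct and fills in steps the paper leaves implicit; you could shorten the Futaki side by citing \cite{Boyer-Galicki-Simanca} as the paper does.
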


\begin{proof}Let $\sigma_X(t)$ be a one parameter subgroup of ${\rm Aut}^T(M)$ generated by some $X=\sum a^iE_i^0 \in\mathfrak a_z'$.
Then by Lemma \ref{0506+}, we have
\begin{eqnarray*}
[\sigma_X(t)]^*\omega_g^T=\omega_g^T+\sqrt{-1}\partial\bar\partial \phi_t
\end{eqnarray*}
for some basic function $\phi_t$. By \cite[Proposition 5.2]{Boyer-Galicki-Simanca}, it follows
\begin{eqnarray*}%\label{Fut=0}
{\frac{d}{dt}}\mathcal K(\psi_t)=-\frac{1}{V}{\rm Fut}(X).
\end{eqnarray*}
On the other hand, by Proposition \ref{mu(u)}, as in \cite{LZZ}, we see that
$${\frac{d}{dt}}\mathcal K(\psi_t)=\mathcal L(a^iv_i+c)=\mathcal L(a^iv_i),$$
where $c$ is some constant. Combining the above two relations, we prove the lemma.
\end{proof}

Without loss of  generality, we may choose $\gamma$ such that $O\in P$. When the Futaki invariant vanishes, ${\mathcal C}_{P,W}$ can be normalized by a set
$$\hat{\mathcal C}_{P,W}=\{u\in\mathcal C_{P,W}|~u\geq u(O)=0\}.$$
In fact, we have

\begin{lem}\label{normalization of u} Assume that $Fut(\cdot)=0$. Then for any $\psi\in \mathcal H_{K\times K}(\frac{1}{2}d \eta)$, there is $\sigma\in Z(H)$ such that
the Legendre function $\hat u$ of $\mathbf \varphi_{\sigma}$ belongs to $\hat{\mathcal C}_{P,W}.$
\end{lem}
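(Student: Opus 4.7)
The idea is to show that after pulling back $\psi$ by a suitable element $\sigma\in Z(H)$ and shifting the resulting basic potential by a constant, the Legendre function $\hat u$ of the reduced K\"ahler potential $\varphi_\sigma$ has $O$ as its global minimum with value zero. Since $u_\psi\in\mathcal C_{P,W}$ is strictly convex, it suffices to arrange $\nabla\hat u(O)=0$ together with $\hat u(O)=0$; convexity then forces $\hat u\geq \hat u(O)=0$ on $P$, placing $\hat u$ in $\hat{\mathcal C}_{P,W}$.

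First I would observe that $O\in P$ is fixed by the Weyl group $W$, and that $u_\psi$ is $W$--invariant by Lemma \ref{Legendre functions}. Differentiating the identity $u_\psi(wv)=u_\psi(v)$ yields the equivariance $\nabla u_\psi(wv)=w\cdot\nabla u_\psi(v)$, so taking $v=O$ forces $a:=\nabla u_\psi(O)$ to be a $W$--fixed vector in $\mathfrak a'$; hence $a\in\mathfrak a'_z$ and $\sigma:=\exp(a)\in Z(H)$ is well--defined. By Lemma \ref{0506+}, the pullback $\sigma^*\omega_\psi^T$ lies in the basic class $[\omega_g^T]_B$, and using the KAK description of Section 3 its restriction to the torus orbit $(T')^c(p)$ has reduced K\"ahler potential $\varphi_\psi(x+a)$. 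A direct Legendre computation then yields
\begin{eqnarray*}
\tilde u(v)=u_\psi(v)-\langle a,v\rangle,\qquad \nabla\tilde u(O)=\nabla u_\psi(O)-a=0.
\end{eqnarray*}

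Finally, the basic potential $\psi_\sigma$ defined by $\sigma^*\omega_\psi^T=\omega_g^T+\sqrt{-1}\partial\bar\partial\psi_\sigma$ is determined only up to an additive constant, so I may subtract $\tilde u(O)$ to arrive at $\hat u\in\mathcal C_{P,W}$ with $\hat u(O)=0$ and $\nabla\hat u(O)=0$. Strict convexity makes $O$ the unique global minimum of $\hat u$, giving $\hat u\in\hat{\mathcal C}_{P,W}$ as required. The main obstacle is verifying the translation formula $\sigma^*\varphi_\psi(x)=\varphi_\psi(x+a)$ for $a\in\mathfrak a'_z$; this rests on identifying the noncompact real direction $\mathfrak a'_z$ in $Z(H)$ with translations of the symplectic coordinate on $\mathfrak a'$, which follows from the toric/KAK structure developed in Lemma \ref{Hessian} together with the identification of the restricted moment polytope in Proposition \ref{polytope P}.
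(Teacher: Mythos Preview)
Your proposal is correct and follows essentially the same approach as the paper: both identify $a=\nabla u_\psi(O)\in\mathfrak a'_z$ via $W$-invariance, pull back by the corresponding element of $Z(H)$ so that the Legendre function becomes $u_\psi-\langle a,\cdot\rangle$, and then absorb the additive constant to force $\hat u(O)=0$. The paper phrases the translation as the time-one flow of the one-parameter group generated by $-a$ (writing $\hat u=u-a^iv_i-u(O)$ directly), but the content is identical to yours.
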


\begin{proof}Let $u$ be the Legendre function of $\varphi_{\psi}$. Then
$u\in{\mathcal C}_{P,W}$. By the $W$-invariance, $a=\nabla u(O)\in\mathfrak a'_z$. Let $\sigma_t^a$ be the one parameter subgroup of $H^c$ generated by $-a$. By Lemma \ref{0506+}, there is a $\psi_\sigma\in\mathcal H_{K\times K}\left(\frac{1}{2}d\eta\right)$ with $(\varphi_0+\psi_\sigma)(O)=0$ such that
\begin{eqnarray*}
(\sigma_1^a)^*{\frac12}d\eta_\psi={\frac12}d\eta+\sqrt{-1}\partial_B\bar\partial_B\psi_\sigma,
\end{eqnarray*}
where $\sigma=\sigma_1^a$.
Then one can check that the Legendre function $\hat u$ of $\varphi_0+\psi_\sigma$ is given by
$$\hat u= u-a^iv_i -u(O).$$ Thus $\hat u\in\hat{\mathcal C}_{P,W}.$
\end{proof}

\subsection{A criterion for the properness of  K-energy}
Recall $I$-functional,
\begin{align}\label{i-functional}
I(\psi)=I(\omega_g^T, \psi)={\frac1V}\int_M\psi[(\frac{1}{2}d\eta)^n\wedge \eta-(\frac{1}{2}d\eta_{\psi})^n\wedge \eta_{\psi}],
\end{align}
where  $\psi \in\mathcal H\left(\frac{1}{2}d\eta\right)$.
We call $\mathcal K(\cdot)$ \emph{proper on $\mathcal H\left(\frac{1}{2}d\eta\right)$} if there is an increasing function $f(t):\mathbb R_{\geq0}\to\mathbb R$ which satisfies $\lim_{t\to+\infty}f(t)=+\infty$ such that
\begin{align}\label{f-function}
\mathcal K(\psi)\geq f(I(\psi)),~\forall\psi\in\mathcal H\left(\frac{1}{2}d\eta\right).
\end{align}
%For $K\times K$-invariant functions $\psi$ in $\mathcal H_{K\times K}(d \eta)$,

In view of Lemma \ref{0506+}, the action of ${\rm Aut}^T(M)$ on $M$ preserves $[\omega_g^T]_B$. We introduce

\begin{defi}\label{properness-definition}Let  $K$ be a   maximal compact
subgroup  of  ${\rm Aut}^T(M)$  and  $\mathcal H_{K}\left(\frac{1}{2}d\eta\right)$  the subset of $K$-invariant Sasaki metrics in $\mathcal H (\frac{1}{2}d \eta)$.    Let $G_0$ be another   subgroup of  ${\rm Aut}^T(M)$.
$\mathcal K(\cdot)$ is called \emph{proper on $\mathcal H_{K}\left(\frac{1}{2}d\eta\right)$ modulo $G_0$} if there is a $f$ as in (\ref{f-function}) such that
\begin{eqnarray*}
\mathcal K(\psi)\geq \inf_{\sigma\in G_0}f(I(\psi_\sigma)),~\forall\psi\in\mathcal H_{K}\left(\frac{1}{2}d\eta\right),
\end{eqnarray*}
where $\psi_\sigma$ is defined by $\frac{1}{2}d\eta_{\psi_\sigma}=\frac{1}{2}\sigma^*(d \eta_\psi)={\frac12}d\eta+\sqrt{-1}\partial_B\bar\partial_B\psi_\sigma$.
\end{defi}

Let $bar$ and $\widetilde{bar}$ be the weighted barycenters of $P_+$ and $\partial P_+$, respectively, which are defined by
\begin{eqnarray}\begin{aligned}
bar&=\frac{\int_{P_+}v \pi\,dv}{\int_{ P_+} \pi\,dv},\notag\\
\widetilde{bar}&=\frac{\sum_{\{A|~ \mathfrak F_A\cap  \mathfrak a_+^*\neq \emptyset\}} \Lambda_A\int_{\mathfrak F'_A\cap P_+}v\langle v,\nu_A\rangle\pi\,d\sigma_0}{\sum_{\{A|~ \mathfrak F_A\cap  \mathfrak a_+^*\neq \emptyset\}} \Lambda_A\int_{\mathfrak F'_A\cap P_+}\langle v,\nu_A\rangle\pi\,d\sigma_0}.\notag
\end{aligned}\end{eqnarray}
Let $bar_{ss}$ and $\widetilde{bar}_{ss}$ are projections of $bar$ and $\widetilde{bar}$ to the semi-simple part $\mathfrak a_{ss}^*$ in $\mathfrak a^*$, respectively. Then following the argument in the proof of main theorem in \cite[Theorem 1.2]{LZZ}, we have

\begin{theo}\label{proper general class}
Let $(M,g)$ be a compact $G$-Sasaki manifold with vanishing Futaki invariant. Suppose that there is a $\gamma\in\mathfrak a_z^*$ such that $\gamma (\xi)\not=0$ and
\begin{eqnarray}
&&\left(\min_A\Lambda_A\cdot\widetilde{bar}_{ss}-4\sigma\right)\in\Xi,   ~\forall ~\mathfrak F_A\cap  \mathfrak a_+^*\neq \emptyset, \label{tildebar1} \\
&&\left(\widetilde{bar}_{ss}-bar_{ss}\right)\in\bar{\Xi}, \label{tildebar2}\\
&&(n+1)\cdot\min_A\Lambda_A-\bar S>0,  ~\forall ~ \mathfrak F_A\cap  \mathfrak a_+^*\neq \emptyset,  \label{barS}
\end{eqnarray}
where $\Lambda_A$ are constants defined by (\ref{constant-a}).  Then the K-energy is proper on $\mathcal H_{K\times K}(\frac{1}{2}d \eta)$ modulo $Z(H)$.
\end{theo}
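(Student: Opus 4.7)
The plan is to translate the properness of $\mathcal{K}(\cdot)$ on $\mathcal{H}_{K\times K}(\frac{1}{2}d\eta)$ modulo $Z(H)$ into a convexity estimate for the reduced K-energy $\mu(\cdot)$ on $\hat{\mathcal C}_{P,W}$, closely following the scheme of \cite{LZZ} but with adjustments for the (non-Delzant) polytope $P$ in (\ref{P def}). Given $\psi \in \mathcal H_{K\times K}(\frac{1}{2}d\eta)$, Lemma \ref{normalization of u} (which requires the vanishing Futaki assumption) yields $\sigma \in Z(H)$ such that the Legendre function $u$ of $\varphi_{\psi_\sigma}$ lies in $\hat{\mathcal C}_{P,W}$, and Proposition \ref{mu(u)} gives $\mathcal K(\psi_\sigma) = \mu(u) + C_0$. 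A Legendre-transform computation based on the KAK-integration formula of Lemma \ref{KAK int} identifies $I(\psi_\sigma)$ up to bounded error with $V_P^{-1}\int_{P_+} u\,\pi\,dv$, so it suffices to prove
$$\mu(u) \geq \delta\, V_P^{-1}\int_{P_+} u\,\pi\,dv - C, \quad u \in \hat{\mathcal C}_{P,W},$$
for some $\delta,C>0$.

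For the linear part $\mathcal L(u)$, Lemma \ref{Fut=0 and L=0} shows that $\mathcal L$ annihilates affine functions coming from $\mathfrak a'_z$, so only the semi-simple components of gradients matter. I would then use integration by parts to rewrite $V_P^{-1}\int_{P_+}\sigma(\nabla u)\pi\,dv$ as a boundary contribution $\sum_A V_P^{-1}\int_{\mathfrak F'_A\cap P_+} u\,\sigma(\nu_A)\pi\,d\sigma_0$ plus an interior term with $u\,\sigma(\nabla\log\pi)$, combine it with the boundary term already present in $\mathcal L(u)$, and invoke (\ref{tildebar1}) at each facet meeting $\mathfrak a_+^*$ so that the combined boundary integrand becomes a non-negative multiple of $u$; assumption (\ref{tildebar2}) then allows replacement of $\widetilde{bar}_{ss}$ by $bar_{ss}$ modulo $\bar\Xi$, and assumption (\ref{barS}) absorbs the term $-\bar S V_P^{-1}\int_{P_+}u\pi\,dv$, producing $\mathcal L(u) \geq \delta V_P^{-1}\int_{P_+} u\,\pi\,dv - C$. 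For the nonlinear part, the crucial point is that $\chi(x)+4\sigma(x) = -2\sum_{\alpha\in R_G^+}\log\sinh\alpha(x) + 4\sigma(x)$ stays bounded above on $\mathfrak a'_+$ since $\log\sinh t = t - \log 2 + O(e^{-2t})$; this controls $V_P^{-1}\int_{P_+}[\chi(\nabla u)+4\sigma(\nabla u)]\pi\,dv$ from above uniformly. The remaining term $-V_P^{-1}\int_{P_+}\log\det(u_{ij})\pi\,dv$ is bounded below by $-C$ via a Jensen inequality against the log-concave measure $\pi(v)\,dv$, using the boundary behavior (\ref{boundary behavior of u^{ij}}) together with the fixed logarithmic blow-up of $u$ near $\partial P$ encoded in $u_G$ of (\ref{u-g}).

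The main obstacle is that $P$ need not be Delzant, since the defining linear forms (\ref{4.4+}) involve the generally irrational ratios $\gamma(u_A)/\gamma(\xi)$. Consequently, the polytope integration-by-parts identities from \cite{D1,D2} that power the estimates in \cite{LZZ} must be recast in terms of the weights $\Lambda_A = \frac{2}{\lambda_A}(1-2\sigma(u_A))$ of (\ref{constant-a}) and the refined boundary asymptotics (\ref{boundary behavior of u^{ij}}). The technical heart of the argument is to track these weights consistently so that the boundary contributions combine with the weighted barycenter $\widetilde{bar}_{ss}$ in a fashion compatible with (\ref{tildebar1}) \emph{uniformly} along every facet meeting $\mathfrak a_+^*$; once this matching is achieved, the combination with (\ref{tildebar2}) and (\ref{barS}) delivers the strict positivity $\delta>0$ needed to conclude.
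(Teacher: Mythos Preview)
Your treatment of the nonlinear part $\mathcal N(u) = -V_P^{-1}\int_{P_+}\log\det(u_{,ij})\,\pi\,dv + V_P^{-1}\int_{P_+}[\chi(\nabla u)+4\sigma(\nabla u)]\,\pi\,dv$ has a genuine gap. First, $\chi(x)+4\sigma(x)=\sum_{\alpha\in R_G^+}\bigl(-2\log\sinh\alpha(x)+2\alpha(x)\bigr)$ is bounded \emph{below} on $\mathfrak a'_+$, not above: each summand equals $2\log 2-2\log(1-e^{-2\alpha(x)})$, which blows up to $+\infty$ as $x$ approaches the Weyl wall $\{\alpha=0\}$, so your large-$t$ asymptotic is irrelevant there. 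Second, and more seriously, $-\int_{P_+}\log\det(u_{,ij})\,\pi\,dv$ is \emph{not} uniformly bounded below on $\hat{\mathcal C}_{P,W}$: for $u=u_0+N|v|^2$ (which, after subtracting the value at $O$, still lies in $\hat{\mathcal C}_{P,W}$ since $u-u_G\in C^\infty(\overline P)$) one has $\det(u_{,ij})\sim(2N)^r$ in the interior of $P_+$, so this integral tends to $-\infty$ as $N\to\infty$; no Jensen-type inequality can salvage this. Hence $\mathcal N(u)$ cannot be bounded below by a constant independent of $u$, and your strategy of bounding $\mathcal L$ and $\mathcal N$ separately cannot succeed.

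The paper's route is essentially different here. One linearizes $-\log\det(u_{,ij})+\chi(\nabla u)$ around a fixed reference $u_0$ using the convexity of $-\log\det$ and of $\chi$ (inequality (\ref{+42})), integrates by parts with the boundary behavior (\ref{boundary behavior of uij}), and controls the resulting singular quantity $Q$ of (\ref{59}) near the Weyl walls by $\sigma(\nabla\pi)$ (Lemma \ref{0504+}); combined with (\ref{dpi estimate}) this yields $\mathcal N^+(u)\ge -C_L\,\mathcal L(u)-C_0$ (Proposition \ref{0505+}), i.e.\ the nonlinear part is controlled \emph{by the linear part}, not by a constant. The linear part itself is estimated through the boundary integral $\int_{\partial P_+}u\langle v,\nu\rangle\,\pi\,d\sigma_0$ (Lemma \ref{linear prop} together with (\ref{0502+})), which is where the barycenter hypotheses enter. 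Finally one invokes the scaling inequality $\mathcal N(u)\ge\mathcal N(\epsilon u)+n\log\epsilon$ and chooses $\epsilon$ small so that the net coefficient $\Lambda_L^{-1}-\epsilon C_L$ of $\mathcal L(u)$ remains positive. This coupling of $\mathcal N$ to $\mathcal L$ and the subsequent $\epsilon$-scaling are the key mechanisms your proposal is missing.
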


We will give a proof of Theorem \ref{proper general class} in case of $\omega_g^T\in{\frac{\pi}{n+1}} c_1^B(M)$ in next section. In this case, $\Lambda_A=2(n+1)$ are all and $\bar S=2n(n+1)$. Thus (\ref{tildebar2}) and (\ref{barS}) are both automatically satisfied. Since $P$ does not satisfy the Delzant condition in general \cite{Gui}, we need to modify the argument in the proof of \cite[Theorem 1.2]{LZZ}. For a general transverse K\"ahler class $ [\omega_g^T]_B$, we left the proof to the reader.

\begin{rem}\label{mu-depends} Since the polytope $P$ in Theorem \ref{proper general class} depends on the choice of $\gamma$, we do not know whether the condtions (\ref {tildebar1})-(\ref{barS}) depend on $\gamma$ or not. But in case of $\omega_g^T\in{\frac{\pi}{n+1}} c_1^B(M)$, the conditions are independent of $\gamma$ (cf. Section 5).
\end{rem}

\section{Properness of $\mu(\cdot)$}

In this section, we prove Theorem \ref{proper general class} in case of $\omega_g^T\in{\frac{\pi}{n+1}} c_1^B(M)$. First
we use Lemma \ref{Hessian} to give a criterion to verfiy $\omega_g^T\in{\frac{\pi}{n+1}} c_1^B(M)$ in terms of the moment cone $\mathfrak C$ given by (\ref{cone def}). This criterion is in fact similar to that a  $G$-manifold being Fano is determined by its moment polytope \cite{LZZ}. We need to introduce some notations below.

Set $\mathfrak C_+=\mathfrak C\cap\mathfrak a^*_+$. We call a facet $\mathfrak F_A$ satisfies $\mathfrak F_A\cap\mathfrak a^*_+\not=\emptyset$ an \emph{outer facet} of $\mathfrak C_+$. Note that for any Weyl chamber ${\mathfrak a^*_+}'$, there exists a unique $w'\in W$ such that $w'(\mathfrak a^*_+)={\mathfrak a^*_+}'$. Thus for any $\mathfrak F_{A'}$ which intersects ${\mathfrak a^*_+}'$, $w'^{-1}(\mathfrak F_{A'})$ is an outer facet by $W$-invariance of $\mathfrak C$, and it has prime normal vector $w'^{-1}(u_{A'})$. We associate to $\mathfrak F_{A'}$ a vector $\sigma_{A'}:=w'(\sigma)$. Obviously
\begin{eqnarray}\begin{aligned}\label{0303++}
\sigma(w^{-1}u_{A'})=\sigma_{A'}(u_{A'}).
\end{aligned}\end{eqnarray}

\begin{prop}\label{Fano condition}
\begin{eqnarray}\label{transverse fano}\omega_g^T\in{\frac{\pi}{n+1}} c_1^B(M)\end{eqnarray}
holds if and only if there is a $\gamma_0\in  \mathfrak a_z^*$ such that
\begin{eqnarray}\begin{aligned}\label{0303}
\gamma_0(u_A)=-1+2\sigma_A(u_A),~\forall~ A,
\end{aligned}\end{eqnarray}
and
\begin{eqnarray}\begin{aligned}\label{0304}
\gamma_0(\xi) =-(n+1).
\end{aligned}\end{eqnarray}
\end{prop}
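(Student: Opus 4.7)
The plan is to translate the cohomological condition (\ref{transverse fano}) into an explicit identity on the open dense orbit $M \cap \mathcal O$ via Lemma \ref{Hessian}, then extract the linear relations (\ref{0303})--(\ref{0304}) by matching the singularities of the transverse Ricci potential across facets of $\mathfrak C$ and Weyl walls.

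First I would compute $\mathrm{Ric}^T$ on the orbit $\mathrm{Orb}_M(p) \cong H$. With $\omega_g^T = \sqrt{-1}\partial\bar\partial\varphi$ for $\varphi = \log \rho$, Lemma \ref{Hessian} and (\ref{MA}) give on $\mathfrak a'_+$
\begin{equation*}
-\log\det g^T = -\log\mathrm{MA}_{\mathbb R}(\varphi)(x) + \log\mathbf J(x) - 2\sum_{\alpha \in R_G^+}\log\langle\alpha,\nabla\varphi(x)\rangle + \mathrm{const}.
\end{equation*}
By (\ref{0206}), the condition (\ref{transverse fano}) is equivalent to the existence of a basic function $h$ on $M$ satisfying $-\log\det g^T = 2(n+1)\varphi + h + \mathrm{const}$ modulo a pluriharmonic term on the open orbit.

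Next I would transfer this identity to the polytope side using Section 4 and the homogeneity identity $l_\xi(y) = \rho^2$ from (\ref{0306+}). The symplectic potential on $Z$ has the Guillemin form $U_0^\xi + U'$ of Proposition \ref{sym potential}, and on each facet $\mathfrak F_A$ the singular piece $\tfrac{1}{2}l_A \log l_A$ produces, via the Legendre computation leading to (\ref{u-0-function}), a $-\log l'_A(v)$-type singularity of $\log \det(\mathrm{Hess}\,u)$ near $\mathfrak F'_A \subset P$. The root factor $-2\sum_\alpha\log\langle\alpha,\nabla\varphi\rangle$ is smooth inside each Weyl chamber after cancellation with $\log \mathbf J(x)$ by $W$-invariance of $\varphi$, but contributes an affine linear term $+2\sigma_A$ along the normal to each outer facet $\mathfrak F_A \cap \mathfrak a^*_+$, exactly as in the $G$-Fano derivation of \cite{LZZ}. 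Requiring the identity of the first step to be smooth and basic then forces a single linear functional $\gamma_0 \in \mathfrak a^*$ whose restriction to each outer facet normal $u_A$ equals $-1 + 2\sigma_A(u_A)$ (giving (\ref{0303}) for outer facets), and whose matching in the $\xi$-direction fixes $\gamma_0(\xi) = -(n+1)$, which is (\ref{0304}). The $W$-invariance of $\mathfrak C$ together with (\ref{0303++}) then extends (\ref{0303}) to every $A$ and forces $\gamma_0 \circ w = \gamma_0$ for all $w \in W$, i.e.\ $\gamma_0 \in \mathfrak a_z^*$; rationality of $\gamma_0$ follows from primitivity of each $u_A \in \mathfrak N$ and rationality of $\sigma$. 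The converse direction is symmetric: given $\gamma_0$ with (\ref{0303})--(\ref{0304}), reversing the matching defines a smooth basic $h$ on the open orbit which extends across $M \setminus \mathcal O$ by the goodness property (C2) of $\mathfrak C$, yielding (\ref{transverse fano}).

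The main obstacle will be the facet-by-facet bookkeeping in the preceding step: verifying that the Weyl-chamber assignment $\mathfrak F_A \mapsto \sigma_A$ is consistent and that the local identities $\gamma_0(u_A) = -1 + 2\sigma_A(u_A)$ at each outer facet can be realized simultaneously by a single $W$-invariant functional $\gamma_0$. The identity (\ref{0303++}) and the goodness property (C2) of $\mathfrak C$ are the key combinatorial tools that make this global matching consistent.
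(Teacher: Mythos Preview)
Your necessary direction is broadly the paper's approach, but you work on the transverse orbit and polytope $P$ whereas the paper works directly on the toric cone $Z$ with the symplectic potential $U$ on $\mathfrak C$. That change of venue has a cost: your derivation of (\ref{0304}) (``matching in the $\xi$-direction'') is vague, and on the $H$-orbit it is not clear what identity you are matching, since $\xi$ is transverse to that orbit. In the paper, (\ref{0304}) falls out of a clean Euler-homogeneity computation on the cone: $\det(U_{,ij})$ is homogeneous of degree $-(r+1)$ in $y$, so applying $y_i\partial_{y_i}$ to both sides of the Legendre-transformed identity (\ref{0302}) and using $\alpha(\xi)=0$ for all roots yields $\gamma_0(\xi)+(n-r)=-(r+1)$. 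You would need to supply an analogous argument; on $P$ this homogeneity is hidden.

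The real gap is in your converse. The paper does \emph{not} reverse the analytic matching. Instead it invokes Brion's formula
\[
-K_{C(M)}|_Z=\sum_A(1-2\sigma_A(u_A))D_A,
\]
observes that (\ref{0303}) rewrites this as $-\sum_A\gamma_0(u_A)D_A$, and uses rationality of $\gamma_0$ to conclude that $-lK_{C(M)}|_Z$ is the divisor of a global character for some $l\in\mathbb N_+$, hence trivial; this propagates to $-lK_{C(M)}$ by $K\times K$-invariance and then gives (\ref{transverse fano}) via the Cho--Futaki--Ono criterion. Your proposal---construct $h$ on the open orbit and extend across $M\setminus\mathcal O$ by (C2)---does not address the global topological obstruction $c_1(\mathcal D)=0$ of Proposition~\ref{C_1(D)=0}. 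The goodness condition (C2) controls the combinatorics of faces of $\mathfrak C$, but it does not by itself guarantee that a function with the correct facet asymptotics extends smoothly over the boundary divisors of $C(M)$; that is exactly what the line-bundle triviality argument supplies.
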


\begin{proof}
Suppose that (\ref{transverse fano}) is true. Then by the relations (\ref{0206}), \cite[(10)]{Futaki-Ono-Wang} and
$${\rm Ric}(\bar g)(X,Y)={\rm Ric}(g)(X,Y)-2ng(X,Y),~\forall X,Y\in TM,$$
we have
$${\rm Ric}(\bar g)=\sqrt{-1}\partial\bar{\partial}h.\footnote{For any basic function $h$ on $M$, we can extend it to $C(M)$ by assumming $\xi(h)={\frac{\partial}{\partial \rho}}h=0$.}$$
Using the K\"ahler potential $F={\frac12}\rho^2$, we get
\begin{equation*}
\partial\bar\partial (-\log {\rm det}(\partial\bar\partial F)- h)=0, ~{\rm in} ~Z,
\end{equation*}
where the operators $\partial, \bar\partial$ are both defined in the affine coordinates on $Z$.
On the other hand, by Proposition \ref{sym potential}, the growth behavior of $F$ on the torus cone $Z$ is same as $\hat F$ in (\ref{0003.3+}). Then one can check that $\log {\rm det}(\partial\bar\partial F)$ has at most the linear growth.
Thus there is $\gamma_0 \in \mathfrak a^*$ such that
$$-\log {\rm det}(\partial\bar\partial F)= h +2\gamma_0(x).$$
By (\ref{MA}), it follows that
\begin{equation}\label{0301}
-\log \mathrm{MA}_{\mathbb R}(F)-\log\prod_{\alpha\in R_G^+}\langle\alpha,\nabla F\rangle^2-\chi(x)=2
\gamma_0 (x) +h+C,~\forall x\in\mathfrak a_+,
\end{equation}
where $\chi(x)=-\log\mathbf J(x)$.
Note that the function $\gamma_{0}(x)$ is $W$-invariant. It follows that $\gamma_0\in   \mathfrak a_z^*\subset \mathfrak z^*(\mathfrak g)$.

Taking the Legendre transformation of $F$ in (\ref{0301}), we have
\begin{eqnarray}\label{0302}
\log\det(U_{,ij})-2\sum_{\alpha\in R_G^+}\log\langle\alpha,y\rangle-\chi(\nabla U)=2\gamma_{0i}U_{,i}+h+C,
\end{eqnarray}
where $U$ is the Legendre function of $F$ and $$U_{,i}=\frac{\partial U}{\partial y_i},~U_{,ij}=\frac{\partial^2 U}{\partial y_i\partial y_j}.$$
Since $\gamma_0$ is $W$-invariant, it suffices to prove (\ref{0303}) when $\mathfrak F_A$ is an outer facet. Let $y_0$ a point on  a facet  $\mathfrak F_A$  of $\mathfrak C$ in $\mathfrak a_+^*$,  which is away from other facets and all Weyl walls. Then by (\ref{0211}), it is easy to see that the sum of  singular terms at the left-hand side of (\ref{0302}) goes to
$$-\log l_A(y)+2 \sigma (u_A) \log l_A(y) $$
as as $y\to y_0$. Similar to the right-hand side of (\ref{0302}), we have
$$ \gamma_0(u_A) \log l_A(y), ~{\rm as}~ y\to y_0.$$
Thus combining the above two relations, we derive (\ref{0303}). Furthermore, one can verify that $\gamma_0$ is uniquely determined by (\ref{0303}) and (C2)-condition  for the
good cone $\mathfrak C$  in Section 2.  By (C1)-condition,  $\gamma_0$ is also rational.

Next we determine the quantity $  \gamma_0(\xi)$. We note that $ \alpha(\xi) =0$ for any $\alpha\in R_G$, since $\xi\in\mathfrak z(\mathfrak k)$. It follows
$$\xi^i{\frac{\partial\mathbf J}{\partial x^i}}=2\mathbf J(x)\sum_{\alpha\in R_G^+}\alpha(\xi)\cdot\coth\alpha(x)
=0.$$
Thus combining with (\ref{0303+}), we get
\begin{align}\label{det-u-homogeous}
&y_i{\frac{\partial}{\partial y_i}}\det(U_{,ij})\notag\\
&=y_i\frac{\partial}{\partial y_i}\left(e^{2\gamma_{0i}U_{,i}+h+C}{\frac{\prod_{\alpha\in R_G^+}\langle\alpha,y\rangle^2}{\mathbf J(\nabla U)}}\right)\notag\\
&=\left(\gamma_{0i}\xi^i+n-r-{\frac{1}{\mathbf J(\nabla U)}}{\frac{\partial\mathbf J}{\partial x^i}}{\frac{\partial x^i}{\partial y_k}}y_k\right)\left(e^{2\gamma_{0i}U_{,i}+h+C}{\frac{\prod_{\alpha\in R_G^+}\langle\alpha,y\rangle^2}{\mathbf J(\nabla U)}}\right)\notag\\
&=\left(\gamma_{0i}\xi^i+n-r\right)\det(U_{,ij}).
\end{align}
On the other hand, $\det(U_{,ij})$ is homogenous of degree $-(r+1)$. Hence by the Euler's equation,
we obtain (\ref{0304}) from (\ref{det-u-homogeous}) immediately.

To prove the sufficient part of proposition, it suffices to show that $-lK_{C(M)}$ is trivial for some $l\in\mathbb N_+$ as in the proof of \cite[Theorem 1.2]{Cho-Futaki-Ono}. We reduce the problem to show that $-lK_{C(M)}|_Z$ is trivial for some $l$. Then we extend the property to $C(M)$ by the $K\times K$-invariance through constructing a non-trivial meromorphic function on $Z$.

By the work of Brion \cite{Brion} (see also \cite[Sect. 1.8]{Ruzzi}), we have
\begin{eqnarray*}
-K_{C(M)}|_Z=\sum_A(1-2\sigma_A(u_A))D_A,
\end{eqnarray*}
where $D_A$ is the boundary prime divisor of $Z$ associated to $\mathfrak F_A$. By $(\ref{0303})$, it follows
\begin{eqnarray*}
-K_{C(M)}|_Z=-\sum_A\gamma_0(u_A) D_A.
\end{eqnarray*}
Recall that $\gamma_0$ is rational. This means that there is an $l\in\mathbb N_+$ such that $l\gamma_0$ is a lattice point in $\mathfrak N^*$. Thus there is a global meromorphic function which defines the divisor $-\sum_Al\gamma_0(u_A) D_A$ (cf. \cite[Chapter 3]{Fulton}). Hence $-lK_{C(M)}|_Z$ is trivial, and so $-lK_{C(M)}$ is. The proof is completed.
\end{proof}

By (\ref{0205}),
it is easy to see $\bar S=2n(n+1)$. To simplify the reduced energy $\mu(\cdot)$ in Proposition \ref{mu(u)}, which depends on the choice of $H$-orbit, we take a translation
$$v'=v+{\frac1{n+1}}\iota^*(\gamma_0),$$
where $\gamma_0$ is given by Proposition \ref{Fano condition}. Then we get a translated polytope $P'=P+{\frac1{n+1}}\iota^*(\gamma_0)$ from (\ref{4.4+}), which is defined by
\begin{eqnarray*}%\label{P' eq}
l'_A(v')=\left(u_A^i-{\frac{ \gamma(u_A)}{\gamma(\xi)}}\xi^i\right)v'_i+{\frac{1-2\sigma_A(u_A)}{n+1}}>0,~\forall A.
\end{eqnarray*}
It is also easy to see that the pull back of any function $u\in\mathcal C_{P,W}$ lies in $\mathcal C_{P',W}$.

 The advantage of choice of $P'$ is that $\Lambda_A=2(n+1)$ for all $A$. Then
\begin{eqnarray}\begin{aligned}
\mathcal L(u)={\frac{2(n+1)}{V_P}}\int_{P'_+}\left\langle v'-\frac2{n+1}\sigma,\nabla u\right\rangle\pi\,dv',\notag
\end{aligned}\end{eqnarray}
and
\begin{align}\label{k-energy-c1}
&\mu(u)\notag\\
&=\mathcal L(u)-{\frac{1}{V_P}}\int_{P'_+}\log\det\left(u_{ij}\right)\pi\,dv'+{\frac{1}{V_P}}\int_{P'_+}[\chi\left(\nabla u \right)+4\sigma(\nabla u) ]\pi\,dv'.
\end{align}
One can check that (\ref{k-energy-c1}) is just the reduced K-energy associated to the $H_0$-orbit determined by choosing $\gamma=\gamma_0$. In the later, we always assume that $H=H_0$.

By the fact that $\bar S=2n(n+1)$ and $\Lambda_A=2(n+1)$, we see that (\ref{tildebar1}) is equivalent to
\begin{eqnarray}\label{0316+}
bar(P_+)=\frac{\int_{ P_+}v \pi\,dv}{\int_{ P_+}\pi\,dv}\in{\frac2{n+1}}\sigma+\Xi.
\end{eqnarray}
Moreover, (\ref{0507}) is equivalent to
\begin{eqnarray}\label{5.8+}
bar(P_+)\in\mathfrak a_{ss,+},
\end{eqnarray}
where $\mathfrak a_{ss,+}$ is the semi-simple part of $\mathfrak a'_{+}$.
Hence, (\ref{0316+}) implies that $M$ has vanishing Futaki invariant by Lemma \ref{Fut=0 and L=0}.
By Lemma \ref{normalization of u}, Theorem \ref{proper general class} turns to prove the following proposition in case of (\ref{transverse fano}).

\begin{prop}\label{main-proposition} Assume that (\ref{0316+}) is satisfied. Then
$\mu(\cdot)$ is proper on $\hat{\mathcal C}_{P,W}$. More precisely, there are $\delta, C_\delta>0$ such that
$$\mu(u)\ge \delta \int_{P_+} u \pi(v)\,dv -C_\delta, ~\forall~ u\in \hat{\mathcal C}_{P,W}.$$
\end{prop}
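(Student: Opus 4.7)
The strategy is to adapt the argument from \cite[Theorem 1.2]{LZZ} to the present Sasaki setting, modifying the steps that rely on the Delzant condition (which $P$ generally fails). Splitting the reduced K-energy via (\ref{k-energy-c1}) as $\mu(u) = \mathcal L(u) + \mathcal N(u)$ with nonlinear part
\begin{equation*}
\mathcal N(u) = -\frac{1}{V_P}\int_{P'_+}\log\det(u_{ij})\,\pi\,dv' + \frac{1}{V_P}\int_{P'_+}\left[\chi(\nabla u) + 4\sigma(\nabla u)\right]\pi\,dv',
\end{equation*}
I plan to prove the two uniform bounds
\begin{equation*}
\mathcal N(u) \geq -C_1 \quad\text{and}\quad \mathcal L(u) \geq \delta\int_{P_+} u\,\pi\,dv - C_2,
\end{equation*}
for constants depending only on $(P, R_G^+, \sigma)$, which together give the claim.

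For the nonlinear bound, I first observe the identity
\begin{equation*}
\chi(x) + 4\sigma(x) = 2|R_G^+|\log 2 - 2\sum_{\alpha\in R_G^+}\log\left(1 - e^{-2\alpha(x)}\right),
\end{equation*}
which is uniformly nonnegative on $\mathfrak a_+$, so this term gives no downward contribution. To control $-\int\log\det(u_{ij})\pi\,dv'$, Jensen's inequality applied to the convex functional $-\log\det$ reduces the task to comparing $u$ with the reference Legendre function $u_G$ from (\ref{u-g}); near $\partial P'_+$ the asymptotics (\ref{boundary behavior of u^{ij}}) combined with the vanishing of $\pi$ on the Weyl walls ensure the requisite integrability, so that $\mathcal N$ is bounded uniformly from below.

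For the linear bound, the key input is the convexity of $u\in\hat{\mathcal C}_{P,W}$ together with the normalization $u\geq u(O)=0$, which yields $\langle v,\nabla u(v)\rangle \geq u(v)\geq 0$ pointwise. Since $\Xi$ is the relative interior of the cone generated by $R_G^+$, the hypothesis (\ref{0316+}), after the shift $v' = v + \frac{1}{n+1}\iota^*(\gamma_0)$, gives a strictly positive decomposition
\begin{equation*}
bar(P_+) - \frac{2}{n+1}\sigma = \sum_{\alpha\in R_G^+} c_\alpha\,\alpha, \quad c_\alpha > 0.
\end{equation*}
Using this decomposition together with the pointwise inequality for $u$, and performing a careful integration by parts to trade $\nabla u$ for $u$ in the term $\sigma(\nabla u)$, one extracts a strictly positive multiple of $\int_{P_+}u\,\pi\,dv$ from $\mathcal L(u)$, up to a bounded error.

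The main obstacle is the integration by parts on the non-Delzant polytope $P$: faces of codimension $\geq 2$ a priori contribute boundary terms not present in the Delzant case. I plan to overcome this by working in the adapted local affine coordinates near each facet $\mathfrak F'_A\cap P_+$ furnished by Proposition \ref{polytope P}, combined with a partition of unity, the decay rates (\ref{boundary behavior of u^{ij}}), and the $W$-invariance of $u$, so that the higher-codimension strata either cancel or remain uniformly bounded. Combining this with the two estimates then yields the desired properness $\mu(u)\geq\delta\int_{P_+}u\pi\,dv - C_\delta$ for some $\delta = \delta(P, R_G^+, \sigma)>0$.
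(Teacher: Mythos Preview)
Your splitting $\mu=\mathcal L+\mathcal N$ is correct, and your lower bound for the linear part is essentially the content of the paper's Lemma \ref{linear prop}. The genuine gap is your claimed uniform bound $\mathcal N(u)\ge -C_1$: this inequality is \emph{false} on $\hat{\mathcal C}_{P,W}$. A simple counterexample is the family
\[
u_k \;=\; u_G - \nabla u_G(O)\cdot v - u_G(O) + \tfrac{k}{2}|v|^2 \;\in\;\hat{\mathcal C}_{P,W},\qquad k\to\infty.
\]
On any compact subset of the interior of $P_+$ one has $\det\bigl((u_k)_{,ij}\bigr)\sim k^r$, so $-\int_{P_+}\log\det\bigl((u_k)_{,ij}\bigr)\pi\,dv\to -\infty$, whereas your identity for $\chi+4\sigma$ shows the other nonlinear term stays bounded as $k\to\infty$. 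Hence $\mathcal N(u_k)\to -\infty$. Your Jensen argument compares with $u_G$ but leaves a remainder $\int_{P_+}u_G^{ij}u_{,ij}\,\pi\,dv$; after integration by parts this produces terms involving $\int_{P_+}u\cdot(\text{bounded})\,dv$ and $\int_{\partial P_+}u\cdot(\text{bounded})\,d\sigma_0$, which are finite for each fixed $u$ (this is the ``integrability'' you mention) but are \emph{not} uniformly bounded over $\hat{\mathcal C}_{P,W}$.

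The paper circumvents this by proving the weaker but sufficient estimate $\mathcal N(u)\ge -C_L\,\mathcal L(u)-C_0$ (Proposition \ref{0505+}): the nonlinear part is controlled by the linear part, not by a constant. The extra ingredient is Lemma \ref{0504+}, which bounds the growth of the coefficient $Q$ near the Weyl walls by $\sigma(\nabla\pi)$; combined with (\ref{dpi estimate}) this feeds all the $u$-dependent error terms back into $\mathcal L(u)$. The proof is then completed by the $\epsilon$-scaling trick: since $\mathcal N(u)\ge \mathcal N(\epsilon u)+n\log\epsilon\ge -\epsilon\,C_L\,\mathcal L(u)-C_0+n\log\epsilon$, one chooses $\epsilon$ small enough that the coefficient of $\mathcal L(u)$ in $\mu(u)=\frac{1}{\Lambda_L}\mathcal L(u)+\mathcal N(u)$ remains strictly positive. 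Your strategy omits precisely this interplay between $\mathcal N$ and $\mathcal L$.
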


\subsection{A criterion for the properness of general functionals}

In this subsection, we will establish a criterion to verify the properness of general functionals $\mu(\cdot)$ for convex functions on a bounded polytope $P$. Let us introduce a setting for such a $P$ and related functionals as follows.

Let $H=(K')^c$ be a reductive Lie group of dimension $n$ with $T'$ its maximal compact torus, and assume that the rank of $H$ is $r$.
Let $R\subset J(\mathfrak t')^*=(\mathfrak a')^*$ be the root system and $R^+$ a chosen set of positive roots. Set $2\sigma=\sum_{\alpha\in R^+}\alpha$ and denote the corresponding Weyl group by $W$. We assume that a bounded polytope $P\subset(\mathfrak a')^*$, which can be described as
\begin{eqnarray*}
P=\bigcap_{\{A=1,...,d\}}\{l'_A(v):=\lambda_A-u_A^iv_i>0\}
\end{eqnarray*}
with each $\lambda_A>0$, which satisfies:
\begin{itemize}
\item[(P1)] $P$ is convex and $W$-invariant, which contains the origin $O$;
\item[(P2)] Each codimension $N$ face of $P$ is exactly intersections of $N$ facets. In particular, each vertex of $P$ is exactly the intersection of $r$ facets;
\item[(P3)] Each $u_A$ satisfies
\begin{eqnarray}\label{0802}
\alpha(u_A)\in\mathbb Z,~\forall ~\alpha\in R.
\end{eqnarray}
\end{itemize}
We note that $u_A$ need not to be a lattice vector in the lattice of one parameter groups. Also we remark that the moment polytope $P$ given in Section \ref{4.2} satisfies these conditions. As before, we set $P_+=P\cap(\mathfrak a')^*_+$, where $(\mathfrak a')^*_+$ is the positive Weyl chamber defined by $R^+$.

Define the Guillemin function of $P$ by
\begin{eqnarray*}
u_P(v)={\frac12}\sum_Al_A(v)\log l_A(v).
\end{eqnarray*}
Then it has properties:
\begin{itemize}
\item[(F1)] $u_P\in C^\infty(P)\cap C^0(\overline P)$;
\item[(F2)] $u_P$ is $W$-invariant and strictly convex;
\item[(F3)] The derivatives of $u_P$ satisfies
\begin{eqnarray*}
u_P^{ij}\in C^\infty(\overline P),
\end{eqnarray*}
where $u_{P,ij}=\frac{\partial^2}{\partial v_1\partial v_j}u_{P}$ and $(u_P^{ij})=(u_{P,ij})^{-1}$.
\end{itemize}

Let $a=(a^i)\in\mathfrak a_z'=\mathfrak a'\cap\mathfrak z(\mathfrak h)$, the central part of $\mathfrak a'$. Assume that
$$a_1\leq a^iv_i\leq a_2,~\forall ~v\in\overline P$$
for some $a_1,a_2$. Let $f(t):[a_1,a_2]\to\mathbb R$ be a function which satisfies:
\begin{itemize}
\item [(W1)] $f$ is smooth;
\item [(W2)] There are constants $m_f,M_f$ such that
\begin{eqnarray*}
0<m_f\leq f(t)\leq M_f,~\forall t\in [a_1,a_2];
\end{eqnarray*}
\item [(W3)] There is constants $C_f$ such that
\begin{eqnarray*}
||f(t)||_{C^2}\leq C_f.
\end{eqnarray*}
\end{itemize}
For simplicity, we denote $f_a(v)=f(a^iv_i)$.

Set a space of normalized $W$-invariant strictly convex functions by
\begin{eqnarray}\begin{aligned}
\hat {\mathcal C}'_{P,W}=\{u\in C^{\infty}(P)\cap C^{0}(\overline P)|~
&u\text{ is strictly convex and $W$-invariant on}~ P ,\notag\\
&u \geq u(O)=0\}.\notag
\end{aligned}\end{eqnarray}
Let $\pi, \chi$ be functions as before. Given $f_a$ and a constant $\Lambda_L>0$, we define a weighted functional $\mu(\cdot)$  associated to $f_a$ for any $u\in\hat{\mathcal C}'_{P,W}$ by
\begin{eqnarray*}
\mu (u)=\frac1{\Lambda_L}\mathcal L(u)+\mathcal N(u),
\end{eqnarray*}
where
\begin{eqnarray}\label{L(u)}
\mathcal L(u)=\int_{P_+}\left\langle v-{{4}{\Lambda_L}}\sigma,\nabla u\right\rangle f_a(v)\pi(v)dv,
\end{eqnarray}
and
\begin{eqnarray*}\begin{aligned}%\label{N(u)}
\mathcal N(u)
&=-\int_{P_+}\log\det\left(u_{,ij}\right)f_a(v)\pi(v)dv\notag\\&+\int_{P_+}[\chi\left(\nabla u\right)+4\sigma(\nabla u) ]f_a(v)\pi(v)dv.
\end{aligned}\end{eqnarray*}
Clearly, $\mathcal L(\cdot)$ is well-defined on $\hat{\mathcal C}'_{P,W}$. We will show that $\mathcal N(\cdot)$ is also well-defined, so is $\mu(\cdot)$ below. The following is the main result in this section.

\begin{theo}\label{proper principle} Let $\Xi$ is the relative interior of the cone generated by $R^+$.
Suppose that $P_+$ satisfies
\begin{eqnarray}\label{weighted bar condition}
bar_a(P_+)=\frac{\int_{P_+}vf_a(v)\pi(v)dv}{\int_{P_+}f_a(v)\pi(v)dv}\in{4{\Lambda_L}}\sigma+\Xi.
\end{eqnarray}
Then there is a $\delta>0$ and a constant $C_\delta$ such that
\begin{eqnarray}\label{0803}
\mu(u)\geq\delta\int_{P_+}u f_a(v)\pi(v)dv-C_\delta,~\forall u\in\hat{\mathcal C}'_{P,W} .\notag
\end{eqnarray}
\end{theo}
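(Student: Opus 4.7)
The plan is to adapt the strategy of \cite[Theorem 1.2]{LZZ} to the present more general setting, where $P$ need not be Delzant and the weight $f_a$ is variable. Writing $\mu(u) = \frac{1}{\Lambda_L}\mathcal{L}(u) + \mathcal{N}(u)$ and letting $I(u) := \int_{P_+} u\,f_a\,\pi\,dv$, I would bound the linear and nonlinear parts of $\mu$ separately, then combine.

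\textbf{Linear part.} For $u \in \hat{\mathcal{C}}'_{P,W}$, the $W$-invariance together with convexity forces $\nabla u(v) \in \bar{\mathfrak{a}}_+'$ for every $v \in P_+$, since $\bar{\mathfrak{a}}_+'$ is precisely the dual cone of the cone generated by $R^+$. The normalization $u(O) = 0 = \min u$ further yields the pointwise convexity inequality $u(v) \leq \langle v, \nabla u(v)\rangle$. The hypothesis (\ref{weighted bar condition}) places $bar_a(P_+) - \tfrac{4}{\Lambda_L}\sigma$ strictly inside $\Xi$; equivalently, it pairs strictly positively with every nonzero vector of $\bar{\mathfrak{a}}_+'$. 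The aim is to promote these pointwise and averaged facts to the global lower bound
\[
\mathcal{L}(u) \;\geq\; \delta_0\,I(u) - C_0, \qquad u \in \hat{\mathcal{C}}'_{P,W},
\]
for some $\delta_0 > 0$ depending only on $P$, $R^+$ and $f_a$. Concretely I would fix $\xi_0 \in \mathfrak{a}_+'$ with $\langle bar_a(P_+) - \tfrac{4}{\Lambda_L}\sigma, \xi_0\rangle \geq c > 0$ and decompose $\mathcal{L}(u)$ into a dominant contribution along $\xi_0$, controlled via integration by parts and the convexity inequality, plus remainder terms whose signs are compatible with $\nabla u \in \bar{\mathfrak{a}}_+'$.

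\textbf{Nonlinear part.} The function $\chi(x) + 4\sigma(x) = -\log\mathbf{J}(x) + 4\sigma(x)$ has only integrable logarithmic singularities along the Weyl walls (which are absorbed by the weight $\pi$) and tends to a finite constant as $x \to \infty$ inside $\mathfrak{a}_+'$, since $\prod_{\alpha \in R^+}\sinh^2\alpha(x) \sim 4^{-|R^+|}e^{4\sigma(x)}$; hence $\int_{P_+}[\chi(\nabla u) + 4\sigma(\nabla u)]\,f_a\,\pi\,dv \geq -C_1$. For the Monge--Amp\`ere term $-\int_{P_+}\log\det(u_{,ij})\,f_a\,\pi\,dv$ I would apply Jensen's inequality to the probability measure $f_a\,\pi\,dv / V_a$ and change variables $x = \nabla u(v)$, pushing $\det(u_{,ij})\,dv$ forward to Lebesgue measure on $\nabla u(P_+) \subset \bar{\mathfrak{a}}_+'$; this should yield
\[
\mathcal{N}(u) \;\geq\; -\varepsilon\,I(u) - C_\varepsilon \qquad \text{for every } \varepsilon > 0.
\]

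Combining the two bounds and choosing $\varepsilon < \delta_0/\Lambda_L$ gives the desired properness $\mu(u) \geq \delta\,I(u) - C_\delta$. The main obstacle is the linear estimate: in \cite{LZZ} the Delzant condition furnished clean boundary identities like (\ref{boundary behavior of u^{ij}}) during the integration by parts, whereas here only (P1)--(P3) are available. The weaker (P2) (simplicity of $P$) nevertheless guarantees that no face of $P$ lies in more facets than its codimension, keeping the corner analysis tractable, and the $W$-invariance of $u$ kills the boundary contributions along the Weyl walls, where $\nabla u$ is tangent to the walls. The weight $f_a$, being bounded above and below by (W2) and $C^2$-bounded by (W3), is carried through each integration by parts without serious difficulty.
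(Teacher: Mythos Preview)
Your treatment of the nonlinear part has a genuine gap. The Jensen step gives
\[
-\int_{P_+}\log\det(u_{,ij})\,f_a\pi\,dv \;\geq\; -V_a\log\Bigl(\tfrac{1}{V_a}\int_{P_+}\det(u_{,ij})\,f_a\pi\,dv\Bigr),
\]
so you need an \emph{upper} bound on $\int_{P_+}\det(u_{,ij})\,f_a\pi\,dv$. But after your change of variables this integral is at least $m_f\!\cdot\!\inf\pi\cdot|\nabla u(P_+)|$, and for the Guillemin function $u_P\in\hat{\mathcal C}'_{P,W}$ one has $\nabla u_P(P)=\mathfrak a'$, hence $|\nabla u_P(P_+)|=\infty$. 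Jensen therefore yields only the trivial bound $-\infty$. More generally, for any $u$ whose gradient image is unbounded (which is the generic situation in $\hat{\mathcal C}'_{P,W}$) the argument collapses. Treating $\chi(\nabla u)+4\sigma(\nabla u)$ and $-\log\det(u_{,ij})$ separately loses exactly the cancellation that makes $\mathcal N$ finite.

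The paper's route is structurally different. One linearizes $-\log\det(\cdot)+\chi(\cdot)$ at the reference $u_0=u_P$ using joint convexity, obtaining an inequality of the form (5.18); integrating by parts then produces first-order boundary terms (handled via (P2) through (5.20)) and a zeroth-order term $\int_{P_+}uQf_a\pi\,dv$ with the explicit curvature-like quantity $Q$ in (5.23). The crux is Lemma~5.5: $|Q|\pi\le C_Q\,\sigma(\nabla\pi)$, and this is precisely where the integrality hypothesis (P3), $\alpha(u_A)\in\mathbb Z$, is used---it controls the interaction between $\coth\alpha(\nabla u_0)$ and the facet structure near the Weyl walls. Your proposal never invokes (P3), which is a signal that the argument cannot close. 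Once (5.24) gives $\mathcal N^+(u)\ge -C_L\mathcal L(u)-C_0$, the paper finishes with the scaling trick $\mathcal N(u)\ge\mathcal N(\epsilon u)+n\log\epsilon$ and chooses $\epsilon$ small; there is no direct estimate $\mathcal N(u)\ge -\varepsilon I(u)-C_\varepsilon$.

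Your linear part is closer in spirit to the paper, but the paper does not proceed by fixing a direction $\xi_0$; it routes everything through the boundary functional $\int_{\partial P_+}u\langle v,\nu\rangle f_a\pi\,d\sigma_0$ via (5.15)--(5.16) and then invokes \cite[Proposition~4.3]{LZZ}.
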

Clearly, Proposition \ref{main-proposition} follows from Theorem \ref{proper principle} by taking $f\equiv1$, $\Lambda_L=(2(n+1))^{-1}$ and $P=\iota^*(\mathcal P)$. In the following, we will use the arguments in \cite{LZZ} to prove the theorem.

\subsection{The linear part $\mathcal L(\cdot)$.}

Let $d\sigma_0$ be the Lebesgue measure of $\partial P_+$ and $\nu$ the corresponding unit normal vector. By (W2)-condition for $f_a$ and convexity of $u$, there is a constant $\Lambda$ such that for any $W$-invariant convex function $u$ which is normalized at $O$,
\begin{eqnarray}\label{0502+}
\int_{P_+} uf_a(v)\pi(v)dv\leq\Lambda\int_{\partial P_+} u \langle v,\nu\rangle f_a(v)\pi(v)d\sigma_0.
\end{eqnarray}

Taking integration by parts in (\ref{L(u)}), and  using the fact that
\begin{eqnarray*}
v_i\pi_{,i}(v)=(n-r)\pi(v),
\end{eqnarray*}
we have
\begin{eqnarray}\begin{aligned}
&4\Lambda_L\int_{P_+}u\,\sigma(\nabla\pi)\rangle f_a(v)dv\notag\\
&=\mathcal L(u)-\int_{\partial P_+}u\langle v-{4\Lambda_L}\sigma,\nu\rangle f_a\pi d\sigma_0\notag\\
&+n\int_{P_+}u f_a(v)\pi(v)dv+\int_{P_+}u\langle v-4\Lambda_L\sigma,a\rangle f'\pi dv.\notag
\end{aligned}\end{eqnarray}
Then by using (W2), (W3) and (\ref{0502+}), we get a constant $C>0$ such that
\begin{align}\label{dpi estimate}
&\int_{P_+} u\,\sigma(\nabla\pi) f_a(v)dv\notag\\
&\leq {\frac{1}{4\Lambda_L}}\mathcal L(u)+C\int_{\partial P_+} u\langle v,\nu\rangle f_a(v)\pi(v)d\sigma_0,~\forall u \in\hat{\mathcal C}'_{P,W}.
\end{align}
Combining (\ref{0502+}), (\ref{dpi estimate}) and following the argument in the proof of \cite[Proposition 4.3]{LZZ}, we can prove

\begin{lem}\label{linear prop}
Under the assumption (\ref{weighted bar condition}), there is a constant $\lambda>0$, such that
\begin{eqnarray*}
\mathcal L(u)\geq\lambda\int_{\partial P_+}u\langle v,\nu\rangle f_a(v)\pi(v)d\sigma_0,~\forall u\in\hat{\mathcal C}'_{P,W}.
\end{eqnarray*}
\end{lem}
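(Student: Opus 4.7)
The plan is to closely follow the proof of \cite[Proposition 4.3]{LZZ}, making use of the convexity and $W$-invariance of $u$, the barycenter hypothesis (\ref{weighted bar condition}), and the auxiliary estimate (\ref{dpi estimate}). The strategy proceeds by a decomposition of $\mathcal L(u)$ around the weighted barycenter followed by an integration-by-parts argument that converts the residual into the desired boundary integral.

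First, I would split
\begin{eqnarray*}
\mathcal L(u)=\int_{P_+}\langle v-bar_a(P_+),\nabla u\rangle f_a\pi\,dv+\int_{P_+}\langle bar_a(P_+)-4\Lambda_L\sigma,\nabla u\rangle f_a\pi\,dv.
\end{eqnarray*}
The monotonicity inequality $\langle v-v_0,\nabla u(v)-\nabla u(v_0)\rangle\ge 0$ for convex $u$, applied at $v_0=bar_a(P_+)$, combined with the defining identity $\int_{P_+}(v-bar_a(P_+))f_a\pi\,dv=0$, shows that the first integral is nonnegative.

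Next, by (\ref{weighted bar condition}) I may write $bar_a(P_+)-4\Lambda_L\sigma=\sum_{\alpha\in R^+}c_\alpha\alpha$ with strictly positive coefficients $c_\alpha>0$. Since $u$ is $W$-invariant and convex, $\nabla u$ takes values in the closure of the positive Weyl chamber of $\mathfrak a'$ on $P_+$, so $\langle\alpha,\nabla u\rangle\ge 0$ on $P_+$ for each $\alpha\in R^+$, and hence
\begin{eqnarray*}
\mathcal L(u)\ge\sum_{\alpha\in R^+}c_\alpha\int_{P_+}\langle\alpha,\nabla u\rangle f_a\pi\,dv.
\end{eqnarray*}
I would then integrate by parts facet-by-facet: since $\pi$ vanishes to second order on the Weyl walls, only the piece $\partial P\cap(\mathfrak a')^*_+$ contributes to the boundary term, and on each facet $\mathfrak F'_A$ the quantity $\langle v,\nu\rangle=\lambda_A/|u_A|$ is a positive constant while $\langle\alpha,\nu\rangle=\alpha(u_A)/|u_A|$ is controlled by (\ref{0802}). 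The interior remainders arising from $\alpha(\nabla\pi)$ and $\alpha(a)f'$ can be reabsorbed by pairing the estimate (\ref{dpi estimate}) with the near-boundary comparison (\ref{0502+}) using a small parameter.

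The main obstacle is the final quantitative step: producing a uniform $\lambda>0$. Because $P$ need not be Delzant in general (normals at a vertex form a basis only up to $\mathbb Q$, and the $\lambda_A$ are not integers), extra care is needed near codimension-two strata to ensure that no positivity is lost in the integration by parts. If a direct computation becomes unwieldy, an alternative is a compactness argument: normalize $u\mapsto u/\int_{\partial P_+}u\langle v,\nu\rangle f_a\pi\,d\sigma_0$, extract a limit in a suitable space of normalized $W$-invariant convex functions on $\overline{P_+}$ with $u(O)=\min u=0$, and derive a contradiction from the strict openness of the condition $bar_a(P_+)-4\Lambda_L\sigma\in\Xi$ together with the already-established nonnegativity of $\mathcal L$ on the limiting function.
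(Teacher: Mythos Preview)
Your proposal is correct and follows essentially the same route as the paper: the paper's own argument for this lemma consists of citing the two auxiliary inequalities (\ref{0502+}) and (\ref{dpi estimate}) and then invoking \cite[Proposition 4.3]{LZZ}, which is exactly the decomposition around $bar_a(P_+)$, the positivity from $bar_a(P_+)-4\Lambda_L\sigma\in\Xi$ together with $\langle\alpha,\nabla u\rangle\ge 0$, and the reabsorption of interior remainders that you describe. Your worry about the Delzant condition is not really an obstacle here, since this linear estimate is purely convex-analytic and uses only (\ref{0502+}), (\ref{dpi estimate}) and the normalization $u\in\hat{\mathcal C}'_{P,W}$; the non-Delzant issues enter only in the nonlinear part treated later.
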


\subsection{The nonlinear part $\mathcal N(\cdot)$.}

In this subsection, we estimate $\mathcal N(\cdot)$. In particular, we show that $\mathcal N(\cdot)$ is well-defined on $\hat{\mathcal C}'_{P,W}$. We will use a method in \cite{D1} (also see \cite{ZZ2, LZZ}). In fact, it suffices to show that for any $u\in\hat{\mathcal C}'_{P,W}$,
\begin{align}\label{0812}
\mathcal N^+(u)&=-\int_{P_+}[\log\det\left(u_{,ij}\right)-\chi\left(\nabla u\right)-4\sigma(\nabla u) ]^+f_a(v)\pi(v)dv\notag\\
&>-\infty.
\end{align}

As in the proof of \cite[Lemma 6.3]{LZZ}, for any $u\in\hat{\mathcal C}'_{P,W}$, we define a $W$-invariant function $\hat u$ such that
$$\hat u|_{P_+}=u+{\frac12}c|v|^2+\sigma_iv_i.$$
Then $\hat u$ lies in $C^\infty(P_+)\cap C^0(\overline P_+)$ and satisfies
$$\mathcal N^+(\hat u)<\mathcal N^+(u).$$
Thus by replacing $u$ with $\hat u$, we may assume $$\log\det\left(u_{,ij}\right)-\chi\left(\nabla u\right)-4\sigma(\nabla u) >0,$$
and consequently $\mathcal N^+(u)=\mathcal N(u)$.

By the convexity of $\chi(\cdot)$ and
$-\log\det(\cdot)$, we have
\begin{align}\label{+42}
&-\log\det(u_{,ij})+\chi(\nabla u)\notag\\
&\geq-\log\det(u_{0,ij})+\chi(\nabla u_0)-\phi_0^{ij}(u_{,ij}-u_{0,ij})+\left.{\frac{\partial\chi}{\partial x^i}}\right|_{x=\nabla u_0}(u_{,i}-{u_{0,i}}).
\end{align}
On the other hand, by the condition (P2), we have (cf. \cite{D2}),
\begin{eqnarray}\label{boundary behavior of uij}
u_P^{ij}\nu_{Ai} \to 0~\text{ and }~u^{ij}_{P,j}\nu_{Ai} \to -\frac{2}{\lambda_A}\langle v, \nu_{ A}\rangle,
\end{eqnarray}
as $v$ goes to a facet $\mathfrak F'_A=\{v|~l'_A(v)=0\}$ of $P$. Here $\nu_A$ is the unit outer normal vector of $\mathfrak F'_A$.
Thus integrating both sides of (\ref{+42}) on $P_+$ and taking integration by parts for the terms $u_0^{ij}u_{,ij}$ and $ \frac{\partial\chi}{\partial x^i} |_{x=\nabla u_0}u_{,i}$, we get
\begin{align}\label{0804}
&\int_{P_+}[-\log\det(u_{,ij})+\chi(\nabla u)]f_a\pi dv\notag\\
&\ge -\int_{\partial P_+}u_{0}^{ij} u_{,i}\nu^jf_a\pi d\sigma_0+\int_{P_+}u_{0,j}^{ij} u_{,i}f_a\pi dv
+\int_{P_+}u_{0}^{ij}a^{j}u_{,i}f'\pi dv\notag\\
&+\int_{P_+}u_{0}^{ij} u^{,j}\pi_{,i}f_a dv
+\int_{\partial P_+}u\left.{\frac{\partial\chi}{\partial x^i}}\right|_{x=\nabla u_{0}}\nu^if_a\pi d\sigma_0\notag\\
&-\int_{ P_+} u\left.{\frac{\partial^2\chi}{\partial x^i\partial x^j}}\right|_{x=\nabla u_{0}} f_a\pi dv\notag\\
&-\int_{P_+} u\left.{\frac{\partial\chi}{\partial x^i}}\right|_{x=\nabla u_{0}}a^if'\pi dv-\int_{P_+} u\left.{\frac{\partial\chi}{\partial x^i}}\right|_{x=\nabla u_{0}}f_a\pi_{,i} dv-C_0.
\end{align}
We need to deal with each terms in (\ref{0804}) in the following.

Note that $u$ is convex and continuous on $\overline P$. Then by  (\ref{boundary behavior of uij}), we have (cf. \cite[Lemma 3.3.5]{D1}),
\begin{eqnarray}\label{0805}
-\int_{\partial P_+} u_{0}^{ij} u_{,i}\nu^jf_a\pi d\sigma_0&=&0
\end{eqnarray}
and
\begin{eqnarray}\label{0805=1}
 -\int_{\partial P_+} u_{0,i}^{ij} u\nu^jf_a\pi d\sigma_0&=&\sum_A{\frac2{\lambda_A}}\int_{\mathfrak F'_A\cap(\mathfrak a')^*_+}u f_a\pi\,d\sigma_0.
\end{eqnarray}
Note that $\pi$ vanishes quadratically on Weyl walls and
\begin{eqnarray}\label{0806}
{\frac{\partial\chi}{\partial x^i}}(x)\to-4\sigma_i
\end{eqnarray}
as $x\to\infty$ and away from Weyl walls. We see that
\begin{eqnarray}\label{0807}
\int_{\partial P_+} u\left.{\frac{\partial\chi}{\partial x^i}}\right|_{x=\nabla u_{0}}\nu^if_a\pi d\sigma_0\to-\int_{\partial P_+}4 u\sigma_i\nu^if_a\pi d\sigma_0.
\end{eqnarray}
Moreover, by the fact that $\alpha(a)=0$ for any $\alpha$, we have
\begin{eqnarray}\label{0808}
\frac{\partial\chi}{\partial x^i}a^i=-2\sum_{\alpha\in R^+} (\alpha_ia^i)\coth\alpha(x)=0.
\end{eqnarray}

On the other hand, taking integration by parts with help of \eqref{boundary behavior of uij}, and then by (F3), (W3),
we get the following estimates,
\begin{align}\label{0809}
&\left|\int_{P_+}u_{0,j}^{ij}u_{,i}f_a\pi dv \right|\notag\\
&=\left|\sum_A{\frac2{\lambda_A}}\int_{\mathfrak F'_A\cap(\mathfrak a')^*_+}u f_a\pi\,d\sigma_0\right.\notag\\
&-\left.\int_{P_+}[u_{0,ij}^{ij}f_a\pi+u_{0,j}^{ij}a^if'\pi+u_{0,j}^{ij}f_a\pi_{,j}]dv\right|\notag\\
&\le C\left(\int_{ \partial P_+} u \langle v,\nu \rangle f_a(v)\pi(v) d\sigma_0+
\int_{P_+} u(1+\sigma(\nabla \pi) ) f_a(v)dv\right),
\end{align}
\begin{align}\label{0809-2}
&\int_{P_+}u_{0}^{ij}a^{j}u_{,i}f'\pi dv\notag\\
&=-\int_{P_+}u[u_{0,i}^{ij}a^{j}f'\pi+u_{0}^{ij}a^{j}f'\pi_{,i}+u_{0}^{ij}a^ia^{j}f''\pi ]dv\notag\\
&\le C\left(\int_{ P_+}u\langle v,\nu\rangle f_a(v)\pi(v) dv+
+ \int_{P_+} u\,\sigma(\nabla \pi) f_a(v)dv\right),
\end{align}
\begin{align}\label{0809-3}
&\int_{P_+}u_{0}^{ij}u_{,i} f_a\pi_{,j} dv\notag\\
&=-\int_{P_+}u_{0,i}^{ij}u f_a\pi_{,j} dv-\int_{P_+} u[u_{0}^{ij} f'a^j\pi_{,j}+u_{0}^{ij}f_a\pi_{,ij}] dv.\notag
\\
&\le C\left(\int_{ P_+} u\langle v,\nu\rangle f_a(v)\pi(v) dv+
+ \int_{P_+} u\,\sigma(\nabla \pi) f_a(v)dv\right).
\end{align}
Thus substituting (\ref{0805})-(\ref{0809-3}) into (\ref{0804}), we finally obtain
\begin{eqnarray}\begin{aligned}
\mathcal N(u) &\geq-C_{1}\int_{\partial P_+} u\langle v,\nu\rangle f_a(v)\pi(v) d\sigma_0\notag\\
&-C_2 \int_{P_+} u(1+\sigma(\nabla \pi)) f_a(v)dv +\int_{P_+} uQ f_a(v)\pi(v)dv -C_3,\notag
\end{aligned}\end{eqnarray}
where
\begin{eqnarray}\label{59}
Q=-\left.{\frac{\partial \chi}{\partial x^i}}\right|_{x=\nabla u_0}{\frac{\pi_{,i}}{\pi}}-\left.{\frac{\partial^2 \chi}{\partial x^i\partial x^k}}\right|_{x=\nabla u_0}{\frac{\partial^2u_0}{\partial v_i\partial v_k}}- u_0^{ij}{\frac{\pi_{,ij}}{\pi}}.
\end{eqnarray}
Hence by (\ref{0502+}) and (\ref{dpi estimate}), we see that there are uniform constants $C_1,C_2,C_3>0$ such that for any $u\in\hat{\mathcal C}'_{P,W}$,
\begin{align}\label{0503+}
&\mathcal N^+(u)\notag\\
&\geq-C_{1}\int_{\partial P_+} u\langle v,\nu\rangle f_a(v)\pi(v) d\sigma_0-C_2\mathcal L(u) + \int_{P_+} uQ f_a(v)\pi(v)dv +C_3.
\end{align}
In particular, (\ref{0812}) is true.

\subsubsection{Estimate of $Q$.}
As in \cite{LZZ}, we have to control the growth of $Q$ near Weyl walls. The goal is to show that

\begin{lem}\label{0504+}
There is a uniform constant $C_Q$ such that
\begin{eqnarray*}
|Q|\pi\leq C_Q\,\sigma(\nabla \pi),~\forall v\in P_+.
\end{eqnarray*}
\end{lem}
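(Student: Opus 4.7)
The plan is to show that although each of the three terms in $Q$ appears to diverge like $\alpha(v)^{-2}$ as $v$ approaches a Weyl wall $\{\alpha(v)=0\}$, the leading singularities cancel, leaving $|Q|$ at most of order $\sum_{\alpha\in R_G^+}\alpha(v)^{-1}$. Since $\sigma(\nabla\pi)/\pi=\sum_{\alpha\in R_G^+}\tfrac{2\langle\sigma,\alpha\rangle}{\alpha(v)}$ with every $\langle\sigma,\alpha\rangle>0$, this exactly matches the required control; on any compact subset of $P_+$ bounded away from all Weyl walls the estimate is automatic, since there $Q\pi$ is uniformly bounded while $\sigma(\nabla\pi)/\pi$ is bounded below by a positive constant.

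To set up the asymptotics at a Weyl wall I will use the Legendre duality together with the $W$-invariance of $\hat F_0$. Since $u_0$ is $W$-invariant it is an even function of $\alpha(v)$ in the direction transverse to the wall $\{\alpha(v)=0\}$, so there is a smooth strictly positive function $c_\alpha(v_\perp)$ such that
\[
\alpha(\nabla u_0(v))=c_\alpha\,\alpha(v)+O(\alpha(v)^3),\quad u_{0,ij}\alpha_i\alpha_j=c_\alpha|\alpha|^2+O(\alpha(v)^2),\quad u_0^{ij}\alpha_i\alpha_j=\frac{|\alpha|^2}{c_\alpha}+O(\alpha(v)^2).
\]
In parallel one splits off the singular part of $\chi$ via $\chi(x)=-2\sum_\beta\log\beta(x)-2\sum_\beta\log\bigl(\sinh\beta(x)/\beta(x)\bigr)$, whose second sum is smooth across Weyl walls, so that the singular behaviour of $\chi_{,i}(\nabla u_0)$ and $\chi_{,ij}(\nabla u_0)$ is driven by $\coth\alpha(\nabla u_0)\sim (c_\alpha\alpha(v))^{-1}$ and $\sinh^{-2}\alpha(\nabla u_0)\sim (c_\alpha\alpha(v))^{-2}$ respectively.

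Inserting these expansions into the three pieces of $Q$ and retaining only the $\alpha(v)^{-2}$ coefficient near $\{\alpha(v)=0\}$ yields
\[
-\chi_{,i}(\nabla u_0)\tfrac{\pi_{,i}}{\pi}\sim\frac{4|\alpha|^2}{c_\alpha\alpha(v)^2},\quad -\chi_{,ij}(\nabla u_0)u_{0,ij}\sim-\frac{2|\alpha|^2}{c_\alpha\alpha(v)^2},\quad -u_0^{ij}\tfrac{\pi_{,ij}}{\pi}\sim-\frac{2|\alpha|^2}{c_\alpha\alpha(v)^2},
\]
and these three leading contributions sum to zero. Consequently $Q=O(\alpha(v)^{-1})$ near each Weyl wall, and the desired estimate $|Q|\pi\le C_Q\sigma(\nabla\pi)$ follows for a suitable $C_Q$. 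The main obstacle is the algebraic cancellation displayed above; once the Legendre asymptotics $\alpha(\nabla u_0)=c_\alpha\alpha(v)+O(\alpha(v)^3)$ are in hand this is a direct computation, and the surviving $O(\alpha(v)^{-1})$ contributions to $Q$ are absorbed term-by-term by $\sigma(\nabla\pi)/\pi$.
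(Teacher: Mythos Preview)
Your strategy—show that the $\alpha(v)^{-2}$ singularities coming from the three pieces of $Q$ cancel, so that $|Q|=O\bigl(\sum_{\alpha}\langle\alpha,v\rangle^{-1}\bigr)$, and then absorb this by $\sigma(\nabla\pi)/\pi=\sum_{\alpha}2\langle\sigma,\alpha\rangle\langle\alpha,v\rangle^{-1}$—is exactly the paper's. The paper writes out the same decomposition, $Q=\sum_{\alpha}I_\alpha+\sum_{\alpha\ne\beta}I_{\alpha,\beta}$ with
\[
I_\alpha=\frac{4|\alpha|^2\coth\alpha(\nabla u_0)}{\langle\alpha,v\rangle}
-\frac{2u_{0,ij}\alpha_i\alpha_j}{\sinh^2\alpha(\nabla u_0)}
-\frac{2u_0^{ij}\alpha_i\alpha_j}{\langle\alpha,v\rangle^2},
\]
and your leading-order cancellation reproduces the mechanism behind $|I_\alpha|\le C_\alpha/\langle\alpha,v\rangle$. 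So the core idea is right.

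The gap is uniformity near $\partial P$. Your expansion $\alpha(\nabla u_0)=c_\alpha(v_\perp)\,\alpha(v)+O(\alpha(v)^3)$ with $c_\alpha$ smooth and strictly positive is only justified on compact subsets of $W_\alpha\cap P$; as $v_\perp$ approaches an outer facet $\mathfrak F'_A$ that meets $W_\alpha$ non-orthogonally, the Guillemin part of $u_0$ contributes a Hessian term of order $(l'_A)^{-1}$ in the $u_A$ direction, so $u_{0,ij}\alpha_i\alpha_j$ (hence $c_\alpha$) blows up and your remainder control degrades. Likewise the statement ``on any compact subset of $P_+$ bounded away from all Weyl walls the estimate is automatic'' does not by itself cover $\partial P_+\cap\partial P$. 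In the paper this is handled by localizing at each $v_0\in W_\alpha\cap\overline P$ (including boundary points) and invoking the case analysis of \cite[Lemmas~4.9, 4.11]{LZZ}; the decisive ingredient at a non-orthogonal corner is the integrality hypothesis~(P3), $\alpha(u_A)\in\mathbb Z_{>0}$, which is needed to make the $\sinh^{-2}$ term and the $u_0^{ij}$ term balance there. Without this boundary discussion your argument gives a constant $C_Q(K)$ for each compact $K\Subset P_+$ but not a single $C_Q$ valid on all of $P_+$. You also do not treat the cross terms $I_{\alpha,\beta}$; these are indeed only of order $\langle\alpha,v\rangle^{-1}+\langle\beta,v\rangle^{-1}$, but their uniform control again needs the same boundary analysis.
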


\begin{proof}
From (\ref{59}), a direct computation shows
\begin{align*}%\label{Q ex}
Q&=\sum_{\alpha\in R^+}\left[4{\frac{|\alpha|^2\coth \alpha(\nabla u_0)}{\langle\alpha,v\rangle}}-2{\frac{u_{0,ij}\alpha_i\alpha_j}{\sinh^{2}\alpha(\nabla u_0)}}-2u_0^{ij}{\frac{\alpha_i\alpha_j}{\langle\alpha,v\rangle^2}}\right]\nonumber\\
&+2\sum_{\alpha\neq \beta\in R^+}\left[\coth{\alpha(\nabla u_0)}\cdot{\frac{\langle \alpha,\beta\rangle}{\langle\beta,v\rangle}}+{\coth{\beta(\nabla u_0)}}\cdot{\frac{\langle \alpha,\beta\rangle}{\langle\alpha,v\rangle}}-2u_0^{ij}{\frac{\alpha_i\beta_j}{\langle\alpha,v\rangle\langle\beta,v\rangle}}\right].
\end{align*}
For simplicity, we denote each term in these two sums by $I_\alpha(v)$ and $I_{\alpha,\beta}(v)$, respectively.

To estimate $I_\alpha(v)$, it suffices to control it near the Weyl wall $W_\alpha=\{v|~\langle\alpha,v\rangle=0\}$. By the $W$-invariance of $P$, we can divide outer faces of $P$ exactly into three classes as in \cite{LZZ}. Fix a point $v_0\in W_\alpha$, let $v\to v_0$. Following the arguments of \cite[Lemma 4.9, Lemma 4.11]{LZZ}, we see that
there is a neighbourhood $U_{v_0}$ and a constant $C_{v_0}$ such that
\begin{eqnarray*}
|I_\alpha(v)|\leq {\frac{C_{v_0}}{\langle\alpha,v\rangle}},~\forall v\in U_{v_0}\cap P_+.
\end{eqnarray*}
We should remark that  by our assumption (\ref{0802}) it holds
\begin{eqnarray*}
\alpha(u_A)\in \mathbb Z_{>0},
\end{eqnarray*}
 for any  outer facet  $\mathfrak F_A$ which is  not orthogonal to $W_\alpha$, although $u_A$ may  not be a lattice vector. Thus the arguments in Case (iii) of \cite[Lemma 4.11]{LZZ} are still available. Following \cite[Lemma 4.11]{LZZ}, $I_{\alpha,\beta}$ can be estimated in a similar way. Since $\partial P_+\cap W_\alpha$ is compact, there are uniform constants $C_\alpha,C_{\alpha,\beta}$ such that
\begin{eqnarray}\label{I-alpha}
\begin{aligned}&|I_\alpha(v)|\leq {\frac{C_{\alpha}}{\langle\alpha,v\rangle}},\\
&|I_{\alpha,\beta}(v)|\leq C_{\alpha,\beta}\left({\frac{1}{\langle\alpha,v\rangle}}+{\frac{1}{\langle\beta,v\rangle}}\right),
\end{aligned}
\end{eqnarray}
for any $v\in P_+$.

Recall that $\langle\sigma,\alpha\rangle>0$ for any $\alpha\in R^+$. Then
\begin{eqnarray*}
{\frac{\sigma(\nabla\pi(v))}{2\pi(v)}}=\sum_{\alpha\in R^+}{\frac{\langle\sigma,\alpha\rangle}{\langle\alpha,v\rangle}}\geq {\frac C {\langle\alpha,v\rangle}},~\forall\alpha\in R^+.
\end{eqnarray*}
Thus Lemma \ref{0504+} follows from (\ref{I-alpha}) and the above inequality.

\end{proof}

Combining (\ref{0502+}), (\ref{dpi estimate}), (\ref{0503+}) and Lemma \ref{0504+}, we prove

\begin{prop}\label{0505+}
There are uniform constants $C_0,C_L>0$ such that for any $u\in\hat{\mathcal C}'_{P,W}$,
\begin{eqnarray}\label{n-l-relation}
\mathcal N^+(u)\geq
-C_L\mathcal L(u)-C_0.
\end{eqnarray}
\end{prop}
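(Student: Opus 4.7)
The plan is that \emph{all} the hard analytic work has already been carried out in (\ref{0503+}) (lower bound for $\mathcal N^+$), Lemma \ref{0504+} (pointwise bound on $|Q|\pi$), inequality (\ref{dpi estimate}) (absorption of $\sigma(\nabla\pi)$-integrals into $\mathcal L$ and boundary integrals), and Lemma \ref{linear prop} (control of boundary integrals by $\mathcal L$). The proof of Proposition \ref{0505+} consists of bookkeeping: chain these four facts together to show that every "bad" term on the right-hand side of (\ref{0503+}) can be absorbed by a multiple of $\mathcal L(u)$ plus an absolute constant.

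Concretely, I would start from (\ref{0503+}). There are two terms that are not already of the desired form $-C_L\mathcal L(u)-C_0$: the boundary term $-C_1\int_{\partial P_+}u\langle v,\nu\rangle f_a\pi\,d\sigma_0$, and the $Q$-term $\int_{P_+}uQf_a\pi\,dv$. First I would invoke Lemma \ref{linear prop} to obtain
\[
\int_{\partial P_+}u\langle v,\nu\rangle f_a(v)\pi(v)\,d\sigma_0 \;\le\; \frac{1}{\lambda}\,\mathcal L(u),
\]
which turns $-C_1\int_{\partial P_+}u\langle v,\nu\rangle f_a\pi\,d\sigma_0$ into $\ge -\tfrac{C_1}{\lambda}\mathcal L(u)$. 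Since $u\ge 0$ and $f_a\pi\ge 0$, Lemma \ref{linear prop} together with the normalization $u\ge u(O)=0$ also ensures $\mathcal L(u)\ge 0$, so taking larger constants is harmless.

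Next, combining Lemma \ref{0504+} with the non-negativity of $u$ and $f_a$ gives
\[
\int_{P_+}uQf_a(v)\pi(v)\,dv \;\ge\; -C_Q\int_{P_+}u\,\sigma(\nabla\pi)f_a(v)\,dv.
\]
Then applying (\ref{dpi estimate}) to the right-hand side and once more invoking Lemma \ref{linear prop} to absorb the remaining boundary integral, I obtain
\[
\int_{P_+}uQf_a(v)\pi(v)\,dv \;\ge\; -C_Q\!\left(\tfrac{1}{4\Lambda_L}+\tfrac{C}{\lambda}\right)\mathcal L(u).
\]

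Inserting these two estimates back into (\ref{0503+}) and setting
\[
C_L \;=\; \tfrac{C_1}{\lambda}+C_2+C_Q\!\left(\tfrac{1}{4\Lambda_L}+\tfrac{C}{\lambda}\right), \qquad C_0 \;=\; |C_3|,
\]
yields (\ref{n-l-relation}). There is no genuine obstacle here; the only point requiring care is to check that every bound is applied in the regime where the prior lemmas are valid (namely $u\in\hat{\mathcal C}'_{P,W}$, so in particular $u\ge 0$ and $u$ is $W$-invariant and convex, which is exactly what (\ref{0502+}), (\ref{dpi estimate}), Lemma \ref{linear prop}, and Lemma \ref{0504+} assume). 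Thus the proof is essentially a one-page linear combination of the already-established inequalities.
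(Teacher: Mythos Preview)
Your proposal is correct and follows essentially the paper's approach: the paper's entire proof is the one-line sentence preceding the proposition (``Combining (\ref{0502+}), (\ref{dpi estimate}), (\ref{0503+}) and Lemma \ref{0504+}, we prove''), and your write-up is precisely the chaining of these estimates that this sentence summarizes. The one point worth noting is that you explicitly invoke Lemma \ref{linear prop} (hence the barycenter hypothesis (\ref{weighted bar condition})) to absorb the boundary integral $\int_{\partial P_+}u\langle v,\nu\rangle f_a\pi\,d\sigma_0$ into $\mathcal L(u)$; the paper's one-liner does not list Lemma \ref{linear prop} among its ingredients, but since (\ref{0502+}) only bounds the boundary integral from \emph{below}, your addition appears to be necessary and simply makes explicit the standing assumption of Theorem \ref{proper principle} under which this whole subsection operates.
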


(\ref{n-l-relation}) implies (\ref{0812}). Thus $\mathcal N(\cdot)$ is well-defined on $\hat{\mathcal C}'_{P,W}$.

\subsection{Proof of Theorem \ref{proper principle}}

\begin{proof}[Proof of Theorem \ref{proper principle}]
Let $\epsilon\in(0,1)$ be a small positive number. Note that $\mathcal N(\epsilon u)>\mathcal N^+(\epsilon u)$. Then by Proposition \ref{0505+}, it is not hard to see that (cf. \cite[Proposition 4.1]{LZZ}),
\begin{eqnarray*}
\mathcal N(u)&\geq& \mathcal N(\epsilon u)+n\log\epsilon\notag\\
&\geq&-C_0+n\log\epsilon-\epsilon C_L\mathcal L(u).
\end{eqnarray*}
Take $\epsilon$ sufficiently small such that
$$1-\epsilon\cdot\Lambda_LC_L=\delta'\cdot\Lambda_L>0.$$
Thus we get
\begin{eqnarray*}
\mu(u)&=&{\frac1{\Lambda_L}}\mathcal L(u)+\mathcal N(u)\notag\\
&\geq&\delta'\mathcal L(u)-C_0+n\log\epsilon.
\end{eqnarray*}
Combining (\ref{0502+}) and Lemma \ref{linear prop}, we derive
\begin{eqnarray*}
\mu(u)
\geq \frac{\delta'\lambda}{\Lambda}\int_Muf_a\pi\,dv-C_0+n\log\epsilon.
\end{eqnarray*}
The theorem is proved.
\end{proof}

\section{Existence of $G$-Sasaki Einstein metrics}

To the authors' knowledge, Futaki, Ono and Wang are the first ones  in the literature who used the equation (\ref{0209}) to study the existence problem of Sasaki Einstein metrics  \cite{Futaki-Ono-Wang}. As in case of K\"ahler-Einstein metrics \cite{Yau, T1}, they
solved the following family of equations  via the continuity method,
\begin{equation}\label{0210}
\det(g_{i\bar j}^T+\psi_{,i\bar j})=\exp(-2t(n+1)\psi+h)\det(g_{i\bar j}^T),~t\in[0,1],
\end{equation}
where $g^T$ is a transverse K\"ahler metric with its K\"ahler form $ \omega_g^T \in{\frac{\pi}{n+1}} c_1^B(M)$.
It is known that $\eqref{0210}$ is solvable for sufficiently small $t>0$ and $ \omega_g^T+\sqrt{-1}\partial\bar\partial \psi $ satisfies the Sasaki Einstein metric equation (\ref{0205}) if $\psi$ is a solution of $\eqref{0210}$ at $t=1$. Thus solving (\ref{0205}) turns to do a prior-estimate for solutions $\psi_t$ for $t\in [t_0,1]$ for some $t_0>0$. As shown in  \cite{Yau, T1}, we need to do the $C^0$-estimate for solutions $\psi_t$.

As a version of Tian's theorem in case of Sasaki manifolds \cite{T1}, Zhang proved the following analytic criterion for the existence of Sasaki Einstein metrics \cite{Zhang}.

\begin{theo}\label{Zhang Xi thm}
Let $(M, \frac{1}{2}d\eta)$ be a $(2n+1)$-dimensional compact Sasaki manifold with ${\frac12}[d\eta]_B={\frac{\pi}{n+1}}c_1^B(M)$. Suppose that there is no non-trivial Hamiltonian holomorphic vector field on $M$. Then $(M, \frac{1}{2}d\eta)$ has a Sasaki-Einstein metric if and only if K-enegry $\mathcal K( \cdot)$ is proper on $\mathcal H\left(\frac{1}{2}d\eta\right)$.
\end{theo}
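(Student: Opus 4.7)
The plan is to adapt Tian's original argument \cite{T1} for Kähler-Einstein metrics to the Sasaki setting, leveraging the transverse analogues of classical Kähler analytic tools developed in \cite{Futaki-Ono-Wang, Cho-Futaki-Ono}. The theorem splits into two directions: existence of a transverse Sasaki-Einstein metric implies properness of $\mathcal{K}(\cdot)$, and properness implies existence.

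For the sufficiency direction (properness implies existence), I would run the continuity method on the family (\ref{0210}) for $t \in [0,1]$. Openness at any $t_0 \in (0,1]$ follows from the implicit function theorem once one checks that the linearization $\bigtriangleup_B + 2t_0(n+1)$ acting on basic functions is invertible: any basic function in its kernel would produce, via property (2) in the definition of $\mathfrak{ham}(M)$, a nontrivial Hamiltonian holomorphic vector field and contradict the hypothesis (cf. \cite[Proposition 2.2]{Cho-Futaki-Ono}). Closedness reduces to a uniform $C^{0}$-estimate $\|\psi_t\|_{\infty} \le C$, since the higher-order estimates transplant Yau's iteration to the transverse Monge-Ampère equation as in \cite{Futaki-Ono-Wang}. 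To extract the $C^{0}$-bound, I would first differentiate (\ref{K-energy}) along the continuity path and verify that $\mathcal{K}(\psi_t)$ is monotonically controlled in $t$, so that it remains bounded from above for $t$ in any subinterval $[t_0,1]$. Properness then yields $I(\psi_t) \le C$, and a basic Green's function / Harnack argument upgrades this $I$-bound to the required $C^{0}$-bound.

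For the necessity direction, suppose $\omega^{T}_{\psi_0}$ is a transverse Sasaki-Einstein metric. A Bando-Mabuchi type uniqueness statement along transverse geodesic segments in $\mathcal{H}(\tfrac{1}{2}d\eta)$, combined with the absence of nontrivial Hamiltonian holomorphic vector fields, forces $\psi_{0}$ (up to an additive constant) to be the unique minimizer of $\mathcal{K}$. To upgrade minimality to a quantitative properness estimate I would follow the Phong-Song-Sturm-Weinkove strategy: decompose $\mathcal{K}$ into an entropy term plus a $J$-type functional, then use Perelman-type estimates on the transverse Ricci potential along the Sasaki-Ricci flow emanating from $\omega_{\psi}^{T}$ to produce an increasing function $f$ with $f(t) \to \infty$ satisfying $\mathcal{K}(\psi) \ge f(I(\psi))$ for every $\psi \in \mathcal{H}(\tfrac{1}{2}d\eta)$.

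The main obstacle is the $C^{0}$-step in the sufficiency direction: passing from control of $I(\psi_t)$ to control of $\|\psi_t\|_{\infty}$. In the Kähler case this hinges on Tian's $\alpha$-invariant estimate together with a Moser iteration, and both must be re-established in the transverse/basic category, using foliated Sobolev inequalities and basic Green's functions rather than their classical counterparts. Once the transverse $\alpha$-invariant is shown to be strictly positive by a compactness argument in the basic $L^{1}$-topology, the remaining Moser iteration mimics the Kähler case, but the verification that all the analytic machinery descends correctly to the transverse Sasaki setting is the technically delicate portion of the proof.
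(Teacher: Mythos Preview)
This theorem is not proved in the paper at all: it is stated as a known result due to Zhang \cite{Zhang} and cited without proof, serving only as motivation for the generalization in Proposition \ref{proper implies existence}. So there is no ``paper's own proof'' to compare your proposal against.

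That said, your outline is a faithful sketch of how one would expect Zhang's argument to go, transplanting Tian's strategy \cite{T1} to the transverse setting. Two remarks. First, for the sufficiency direction the paper's own later argument (Proposition \ref{proper implies existence}) indicates that the authors, following \cite{Zhang}, do not pass through a transverse $\alpha$-invariant at all: the $C^0$-estimate is obtained from the bound on $I(\psi_t)$ via a transverse Green's function argument (this is exactly the mechanism in Lemma \ref{J normalization}), so the ``main obstacle'' you flag is handled more simply than you suggest. Second, your openness step is slightly off: at $t_0\in(0,1)$ the linearized operator is $\bigtriangleup_B + 2t_0(n+1)$, and a function in its kernel would be an eigenfunction with eigenvalue $-2t_0(n+1) > -2(n+1)$; under the transverse Fano normalization the first nonzero eigenvalue of $\bigtriangleup_B$ on a metric with ${\rm Ric}^T \ge 2t_0(n+1)\,\omega^T$ is at most $-2t_0(n+1)$, with equality forcing a Hamiltonian holomorphic vector field, so the hypothesis gives strict inequality and hence invertibility --- but you should state this Lichnerowicz-type bound explicitly rather than jump directly to \cite[Proposition 2.2]{Cho-Futaki-Ono}.
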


\subsection{A generalization of Zhang's theorem}
In general, $(M, \frac{1}{2}d\eta)$ may admit Hamiltonian holomorphic vector fields. Note that $\mathcal K( \cdot)$ is invariant under ${\rm Aut}^T(M)$ if the Futaki-invariant vanishes. Thus one shall modify  Theorem  \ref{Zhang Xi thm} for the properness property of $\mathcal K( \cdot)$ in sense of Definition \ref{properness-definition}.

Similar to $I$-functional, one can define Aubin's $J$-functional on $\mathcal H \left(\frac{1}{2}d\eta\right)$ by
\begin{eqnarray*}
J(\psi)=\int_0^1{\frac1s}I(s\psi)ds.
\end{eqnarray*}
It can be checked that (cf. \cite{Zhang})
\begin{eqnarray}
0\leq{\frac1{n+1}}I(\psi)\leq I(\psi)-J(\psi)\leq{\frac n{n+1}}I(\psi).\notag
\end{eqnarray}

\begin{lem}\label{0810}Let ${\rm Aut}_0^T(M)$ be the connected component of ${\rm Aut}^T(M)$ which contains the identity.   Then for any $\psi\in\mathcal H\left(\frac{1}{2}d\eta\right)$, there exists a $\sigma_0\in {\rm Aut}_0^T(M)$ such that
\begin{eqnarray}\begin{aligned}\label{inf-existence}
(I-J) (  \psi_{\sigma_0}) = \min_{\sigma\in {\rm Aut}_0^T(M)} \{(I-J)(  \psi_{\sigma}  ) \},
\end{aligned}\end{eqnarray}
where $\psi_{\sigma}$ is an induced potential defined by
$$\frac{1}{2}\sigma^* d\eta_\psi=\omega_g^T+\sqrt{-1}\partial\bar\partial \psi_\sigma.$$
Moreover, (\ref{inf-existence}) holds if and only if
\begin{eqnarray}\begin{aligned}\label{orthogonal-condition}
\int_M  {\rm real} (X)( \psi_{\sigma_0}) (d\eta_{\psi_{\sigma_0}})^{n}\wedge\eta_{\psi_{\sigma_0}}=0,~\forall~X\in \mathfrak{ham}(M).
\end{aligned}\end{eqnarray}

\end{lem}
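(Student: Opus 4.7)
\noindent\emph{Proof plan.} The strategy is to adapt the Bando-Mabuchi minimizing argument for K\"ahler-Einstein manifolds, in its Sasaki form due to Zhang \cite{Zhang}, to the present setting with a possibly non-trivial transverse automorphism group. The first reduction is to a finite-dimensional minimization: after replacing $\omega_g^T$ by a $K_0^T$-averaged representative in its basic K\"ahler class (where $K_0^T$ is a maximal compact subgroup of ${\rm Aut}_0^T(M)$), a standard change-of-variables in the integrals defining $I$ and $J$ yields $(I-J)(\psi_{k\sigma})=(I-J)(\psi_\sigma)$ for every $k\in K_0^T$. By the polar decomposition ${\rm Aut}_0^T(M)=K_0^T\cdot\exp(J\mathfrak k_0^T)$ of this reductive complex group, it then suffices to minimize $(I-J)(\psi_{\sigma_X})$ over the finite-dimensional family $\sigma_X:=\exp(JX)$, $X\in \mathfrak k_0^T$.

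Next I would establish convexity of $F_X(t):=(I-J)(\psi_{\sigma_{tX}})$ for every $X\in\mathfrak k_0^T$. A direct first-second-variation calculation, analogous to the one in \cite{Zhang} in the absence of automorphisms, gives
$$F_X''(t)=\frac{1}{V}\int_M|\bar\partial_B u_t|^2_{g^T_{\psi_{\sigma_{tX}}}}\left(\tfrac{1}{2}d\eta_{\psi_{\sigma_{tX}}}\right)^n\wedge\eta_{\psi_{\sigma_{tX}}}\geq 0,$$
where $u_t$ is the transverse Hamiltonian potential of $JX$ with respect to $\omega^T_{\psi_{\sigma_{tX}}}$. Thus $F_X$ is convex, and strictly so unless $JX$ generates a transverse isometry of $\omega^T_{\psi_{\sigma_{tX}}}$, i.e.\ unless $X$ belongs to the stabilizer subalgebra $\mathfrak s\subset\mathfrak k_0^T$.

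To obtain the minimizer $\sigma_0$, take a minimizing sequence $\sigma_n=\exp(JX_n)$ and split $X_n=Y_n+Z_n$ with $Y_n\in\mathfrak s$ and $Z_n\in\mathfrak s^\perp$. Since the $Y$-direction leaves $F_X$ invariant, one may assume $X_n=Z_n$. A uniform transverse Poincar\'e-type lower bound on $F_Z''$ along the $K_0^T$-orbit of $\psi$ then forces $\|Z_n\|$ to remain bounded, and a subsequential limit $Z_n\to Z_\infty$ yields $\sigma_0:=\exp(JZ_\infty)$ as a minimizer. This is the main obstacle of the proof: convexity alone does not force existence of a minimizer (consider $e^{-t}$ on $\mathbb R$), so a coercivity estimate for $F_X$ in directions transversal to $\mathfrak s$ is essential and requires careful control of transverse metrics along the polar flow, relying on compactness of $M$ and boundedness of the basic K\"ahler class.

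Finally, the characterization (\ref{orthogonal-condition}) is a first-variation identity. Differentiating $(I-J)(\psi_{\exp(tY)\sigma_0})$ at $t=0$ for $Y\in\mathfrak{ham}(M)$ produces, up to a positive multiplicative constant,
$$-\frac{1}{V}\int_M\mathrm{real}(Y)(\psi_{\sigma_0})\,(d\eta_{\psi_{\sigma_0}})^n\wedge\eta_{\psi_{\sigma_0}}.$$
Vanishing of this derivative in every direction is equivalent to (\ref{orthogonal-condition}); conversely, once this holds at some $\sigma_0$, the convexity of each $F_X$ at the critical value $t=0$ promotes $\sigma_0$ to a global minimizer, completing the equivalence.
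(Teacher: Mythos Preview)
Your approach is correct and matches the paper's: both compute the first variation of $(I-J)(\psi_{\sigma_s})$ along one-parameter subgroups to derive (\ref{orthogonal-condition}), then establish convexity via the second variation (the paper obtains $\frac{d^2}{ds^2}(I-J)(\psi_{\sigma_s})=\frac{1}{2^nV}\int_M|X|^2_{d\eta_{\psi_{\sigma_s}}}(d\eta_{\psi_{\sigma_s}})^n\wedge\eta_{\psi_{\sigma_s}}\ge 0$), and use this convexity to promote critical points to global minimizers. For existence the paper is considerably briefer than you, simply asserting that $I(\psi_\sigma)\to\infty$ as ${\rm dist}({\rm Id},\sigma)\to\infty$; your more elaborate stabilizer-splitting and Poincar\'e-type coercivity sketch is sound but unnecessary once one accepts this properness.
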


\begin{proof}Let $\sigma_s$ be the one parameter subgroup in ${\rm Aut}_0^T(M)$ generated by ${\rm real}(X)$. Then
by a direct computation, we have
\begin{align}\label{0811}
&\frac{d}{ds}[I(\psi_{\sigma_s})-J(\psi_{\sigma_s})]|_{s=0}\notag\\
&=-{\frac n{2^{n-1}V}}\int_M \psi_{\sigma_0} dd^c_B\dot \psi_{\sigma_s} |_{s=0}\wedge d\eta_{\psi_{\sigma_{0}}}^{n-1}\wedge\eta_{ \phi_{\sigma_{s_0}}}\notag\\
&={\frac {\sqrt{-1}n}{2^{n-1}V}}\int_M\partial_B \psi_{\sigma_0}\wedge\bar\partial_B\dot \psi_{\sigma_s}|_{s=0}\wedge d\eta_{ \psi_{\sigma_0}}^{n-1}\wedge\eta_{ \psi_{\sigma_0}}\notag\\
&={\frac {1}{2^nV}} \int_M {\rm real}(X)(\psi_{\sigma_0}) d\eta_{\psi_{\sigma_0}}^{n}\wedge\eta_{\psi_{\sigma_0}}.
\end{align}
Thus if $\sigma_0$ is a minimizer of $F(\sigma)=I(\psi_{\sigma})-J(\psi_{\sigma})$, then (\ref{orthogonal-condition}) holds.
%when $\psi$ is replaced by $\psi_{\sigma_0}$ in the above relation.
Conversely, we need to show that a critical point of $F(\sigma)$ is also a minimizer. This follows from the convexity of $F(\sigma)$ along any one parameter subgroup $\sigma_s$. Namely, we have
\begin{eqnarray}\begin{aligned}\label{convex-j}
{\frac{d^2}{ds^2}}[I( \psi_{\sigma_s} )-J( \psi_{\sigma_s})]\ge 0,~\forall s\ge ~0.
\end{aligned}\end{eqnarray}

Rewrite the second identity in (\ref{0811}) as
\begin{eqnarray*}
{\frac{d}{ds}}[I( \psi_{\sigma_s})-J( \psi_{\sigma_s})]=-{\frac 1{2^nV}}\int_M\dot \psi_{\sigma_s} \bigtriangleup_B \psi_{\sigma_s} d\eta_{ \psi_{\sigma_s} }^{n}\wedge\eta_{ \psi_{\sigma_s}}.
\end{eqnarray*}
Then
\begin{align}\label{second-variation-i-j}
&{\frac{d^2}{ds^2}}[I (\psi_{\sigma_s})-J( \psi_{\sigma_s})]\notag\\
&=-{\frac 1{2^nV}}\int_M\ddot \psi_{\sigma_s} \bigtriangleup_B \psi_{\sigma_s} d\eta_{ \psi_{\sigma_s}}^{n}\wedge\eta_{ \psi_{\sigma_s} }-{\frac 1{2^nV}}\int_M\dot \psi_{\sigma_s} \bigtriangleup_B\dot \psi_{\sigma_s} d\eta_{ \psi_{\sigma_s} }^{n}\wedge\eta_{ \psi_{\sigma_s} }\notag\\
&-{\frac 1{2^nV}}\int_M\dot \psi_{\sigma_s} \bigtriangleup_B\dot \psi_{\sigma_s} \bigtriangleup_B \psi_{\sigma_s} d\eta_{\psi_{\sigma_s}}^{n}\wedge\eta_{ \psi_{\sigma_s}}\notag\\
&+{\frac1{2^nV}}\int_M\dot \psi_{\sigma_s} \langle\partial_B\bar\partial_B \psi_{\sigma_s},\partial_B\bar\partial_B\dot \psi_{\sigma_s}\rangle_{d\eta_{ \psi_{\sigma_s} }} d\eta_{ \psi_{\sigma_s}}^{n}\wedge\eta_{ \psi_{\sigma_s}}.
\end{align}
Note
\begin{eqnarray*}
\ddot \psi_{\sigma_s} =|\bar\partial_B\dot \psi_{\sigma_s} |^2_{d\eta_{ \psi_{\sigma_s}}}.
\end{eqnarray*}
Taking integration by parts in (\ref{second-variation-i-j}), we get
\begin{eqnarray*}
{\frac{d^2}{ds^2}}[I(\psi_{\sigma_s} )-J( \psi_{\sigma_s})]={\frac 1{2^nV}}\int_M|X|^2_{d\eta_{ \psi_{\sigma_s}}} d\eta_{ \psi_{\sigma_s}}^{n}\wedge\eta_{ \psi_{\sigma_s}}\geq0,~\forall s\ge 0.
\end{eqnarray*}
This verifies (\ref{convex-j}).

The existence of minimizers $\sigma_0$ of $F(\sigma)$ follows from the fact
$I(\phi_{\sigma} )$ goes to the infinity when ${\rm dist}({\rm Id}, \sigma)$ goes to the infinity.

\end{proof}

The following is a modification of Theorem \ref{Zhang Xi thm} in the sufficient part.

\begin{prop}\label{proper implies existence}
Let $(M, \frac{1}{2}d\eta)$ be a $(2n+1)$-dimensional compact Sasaki manifold with ${\frac12}[d\eta ]_B={\frac{\pi}{n+1}}c_1^B(M)$. Let $K$ and $G_0$ be two subgroups of ${\rm Aut}^T(M)$ as in Definition \ref{properness-definition}. Then $(M, \frac{1}{2}d\eta)$ admits a transverse Sasaki-Einstein metric if
$\mathcal K(\cdot)$ is proper on $\mathcal H_{K}\left(\frac{1}{2}d\eta\right)$ modulo $G_0$.
\end{prop}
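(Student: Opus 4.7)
The plan is to run the Futaki--Ono--Wang continuity method on the family \eqref{0210} and reduce everything to a uniform $C^{0}$-estimate for $t$-solutions $\psi_{t}$. Openness at any $t_{0}\in(0,1)$ and the start $t=t_{0}$ small follow from standard implicit-function and linearization arguments on basic functions (using $\Delta_{B}$), exactly as in the K\"ahler setting. Smoothness propagates once $C^{0}$ is known, by the usual Yau-type $C^{2}$ and Calabi $C^{3}$ estimates in the transverse setting (available in \cite{Futaki-Ono-Wang}). The whole problem thus reduces to: for a $K$-invariant starting data $(g,\eta)$ and the $K$-invariant path $\psi_{t}$ (invariance is preserved by uniqueness of the solution), show $\|\psi_{t}\|_{C^{0}}\le C$ uniformly in $t\in[t_{0},1)$.

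To obtain the $C^{0}$-estimate I would first show that $\mathcal{K}(\psi_{t})$ is uniformly bounded from above. This is standard from differentiating the K-energy along the path: by computing $\frac{d}{dt}\mathcal{K}(\psi_{t})$ from \eqref{K-energy} and using that $\psi_{t}$ satisfies \eqref{0210} (so that $S_{t}^{T}-\bar{S}^{T}$ is explicitly controlled by $\dot{\psi}_{t}$ and $\psi_{t}$), one gets $\mathcal{K}(\psi_{t})\le (1-t)\bigl[J(\psi_{t})-I(\psi_{t})\bigr]+C\le C'$, using $I-J\ge 0$. Next, because $\psi_{t}$ is $K$-invariant, the properness hypothesis applies and gives
\begin{equation*}
\inf_{\sigma\in G_{0}} f\bigl(I(\psi_{t,\sigma})\bigr)\le \mathcal{K}(\psi_{t})\le C',
\end{equation*}
so $\inf_{\sigma\in G_{0}} I(\psi_{t,\sigma})\le C''$ for a uniform constant. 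Invoking Lemma \ref{0810} (with $I-J$ in place of $I$, both being comparable up to the factor $\tfrac{n}{n+1}$), I would pick $\sigma_{t}\in G_{0}$ realizing the infimum and set $\tilde{\psi}_{t}=\psi_{t,\sigma_{t}}$, which is still $K$-invariant (by choosing $\sigma_{t}$ from a maximal compact normalization, or by $K$-averaging along the orbit, using that the infimum is attained on a $K$-coset). Then $(I-J)(\tilde{\psi}_{t})\le C'''$ and, crucially, $\tilde{\psi}_{t}$ satisfies the orthogonality \eqref{orthogonal-condition}.

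The main obstacle is upgrading ``$I$ bounded plus orthogonality to Hamiltonian holomorphic vector fields'' into an honest $L^{\infty}$-bound for $\tilde{\psi}_{t}$. In the K\"ahler-Einstein case this is the Bando--Mabuchi/Tian argument: one normalizes $\tilde{\psi}_{t}$ by $\sup\tilde{\psi}_{t}=0$ (or by the Aubin--Yau normalization), proves a uniform $\alpha$-invariant / Harnack bound $\int_{M} e^{-\lambda\tilde{\psi}_{t}}(\tfrac12 d\eta)^{n}\wedge\eta\le C$ using that $\tilde{\psi}_{t}$ is $K$-invariant and normalized (Tian--Zhu use the pluricomplex Green function, $K$-invariance ensures a lower bound on the $\alpha$-invariant for the reduced class), and then feeds this into Moser iteration on the Monge--Amp\`ere equation \eqref{0210}. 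The transverse/basic version of these tools is developed in \cite{Zhang, Futaki-Ono-Wang}, so what I need to verify is that the $K$-invariance (not only $G_{0}$-reduction) is preserved by the choice of $\sigma_{t}$; this is where picking $\sigma_{t}$ as a minimizer that commutes with $K$ (or averaging in the centralizer) is essential. The orthogonality \eqref{orthogonal-condition} is used to rule out drift along $\mathfrak{ham}(M)$ directions that would otherwise destroy the Harnack bound.

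Once $\|\tilde{\psi}_{t}\|_{C^{0}}\le C$ is established, the higher-order transverse estimates of \cite{Futaki-Ono-Wang} give smooth subsequential limits as $t\to 1^{-}$. After undoing $\sigma_{t}$ (the limit metric $\tfrac12 d\eta+\sqrt{-1}\partial_{B}\bar\partial_{B}\tilde{\psi}_{1}$ is already transverse Sasaki--Einstein, there is no need to invert $\sigma_{t}$), we obtain a transverse Sasaki--Einstein metric in $[\tfrac12 d\eta]_{B}$, completing the proof.
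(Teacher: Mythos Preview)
Your overall plan---continuity method, upper bound on $\mathcal K(\psi_t)$, invoke properness to control $I$, then standard $C^0$ and higher estimates---is exactly the paper's. But you miss the one observation that makes the argument short, and instead create for yourself the ``main obstacle'' you then struggle to resolve. The point is that the continuity solution $\psi_t$ \emph{already} satisfies the orthogonality \eqref{orthogonal-condition}, so by Lemma~\ref{0810} it is itself a minimizer of $(I-J)(\psi_{t,\sigma})$ over $\sigma$. Indeed, from \eqref{0210} the Ricci potential of $\tfrac12 d\eta_{\psi_t}$ satisfies $h_t+2(n+1)(1-t)\psi_t=c_t$; hence for any $X\in\mathfrak{ham}(M)$,
\[
\int_M \mathrm{real}(X)(\psi_t)\,(d\eta_{\psi_t})^n\wedge\eta_{\psi_t}
=\frac{1}{(1-t)}\,\mathrm{real}\,\mathrm{Fut}(X)=0,
\]
the Futaki invariant vanishing because properness modulo $G_0$ forces it. Properness therefore gives directly $(I-J)(\psi_t)\le C$, with no need to pass to an auxiliary $\tilde\psi_t=\psi_{t,\sigma_t}$.

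This dissolves everything you flagged as problematic: no question of whether $\sigma_t$ preserves $K$-invariance, no Bando--Mabuchi/Harnack machinery, and no hidden dependence of constants on $\sigma_t$ (your $\tilde\psi_t$ would solve \eqref{0210} only with respect to the $\sigma_t$-pulled-back reference, so the $C^0$-constants would a priori depend on $\sigma_t$). Once $I(\psi_t)$ is bounded and $\psi_t$ solves \eqref{0210} with the fixed background, the $C^0$-estimate is standard from \cite{Zhang}.
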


\begin{proof} The proof is a slight modification of Tian's argument for K\"ahler-Einstein metrics in \cite[Theorem 2.6]{Tian book} (also  see \cite{T1, Zhang}). Without loss of generality, we may assume that $d\eta$ is $K$-invariant. Thus all $\psi_t$ of (\ref{0210}) are $K$-invariant. It suffices to get a uniform bound of $I(\psi_t)$. We note that ${\rm Fut}(\cdot)\equiv 0$ on $\mathfrak {ham}(M)$ since $\mathcal K(\cdot)$ is proper on $\mathcal H_{K}\left(\frac{1}{2}d\eta\right)$ modulo $G_0$.

From the computation for solutions $\psi_t$ in (\ref{0811}), we have
$$\frac{d}{ds}[I(\psi_{\sigma_s})-J(\psi_{\sigma_s})]|_{s=0}=-{\frac 1{V}}\int_M {\rm real}(X)(\psi_t) d\eta_{\psi_t}^{n}\wedge\eta_{\psi_t}.$$
Note that
\begin{eqnarray*}
h_t+2(n+1)(1-t)\psi_t=c_t,
\end{eqnarray*}
where $h_t$ is the basic Ricci potential of $\frac{1}{2}d\eta_{\psi_t}$ and $c_t$ is a constant.
Thus
\begin{eqnarray}\begin{aligned}\frac{d}{ds}[I(\psi_{\sigma_s})-J(\psi_{\sigma_s})]|_{s=0}&={\frac1{(1-t)V}} {\rm real}( {\rm Fut}(X))\notag\\
&=0, \forall~X\in \mathfrak{ham}(M).\notag
\end{aligned}\end{eqnarray}
This means that $\psi_t$ is a minimizer of $I(\psi_{\sigma})-J(\psi_{\sigma})$  for $\psi_t$ by Lemma \ref{0810}.
Since $\mathcal K(\psi_t)$ is uniformly bounded above for any $t\in [t_0,1]$ (cf. \cite{Zhang}), $I(\psi_{t})-J(\psi_{t})$ and so $I(\psi_t)$ is uniformly bounded by the properness of $\mathcal H_{K}\left(\frac{1}{2}d\eta\right)$ modulo $G_0$.

\end{proof}

\subsection {Proof of Theorem \ref{main thm}} First we prove the necessary part. Here we will use an argument for extremal K\"ahler metrics from \cite{ZZ, ZZ1}. In fact, we have the following proposition.

\begin{prop}\label{ness}
Suppose that $M$ admits a $G$-Sasaki metric with constant transverse scalar curvature. Then for any convex $W$-invariant piecewise linear function $f$ on $P$, we have
\begin{eqnarray*}\mathcal L(f)\geq0.\end{eqnarray*}
Moreover, the equality holds if and only if
\begin{eqnarray*}f(v)=a^iv_i\end{eqnarray*}
for some $a=(a^i)\in\mathfrak a'_z$.
\end{prop}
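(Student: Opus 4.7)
The plan is to exploit the fact that existence of a transverse constant scalar curvature metric forces the reduced K-energy $\mu(\cdot)$ of Proposition~\ref{mu(u)} to be bounded below on $\mathcal{C}_{P,W}$. Indeed, $\mathcal{K}(\psi)=\mu(u_\psi)+\mathrm{const}$ by Proposition~\ref{mu(u)}, and the existence of a transverse cscm metric makes $\mathcal{K}(\cdot)$ bounded below on $\mathcal{H}(\tfrac12 d\eta)$ via the convexity of the K-energy along weak geodesics (cf.~\cite{Zhang}). The overall argument parallels \cite{ZZ, ZZ1}.

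First I would treat smooth strictly convex $W$-invariant $f$ on $\overline{P}$. Fix $u_0\in\mathcal{C}_{P,W}$ and set $u_\tau:=u_0+\tau f$ for $\tau\ge 0$; then $u_\tau\in\mathcal{C}_{P,W}$ since adding a smooth function to $u_0$ preserves both the $u_G$-boundary behaviour and strict convexity. Linearity gives $\mathcal{L}(u_\tau)=\mathcal{L}(u_0)+\tau\mathcal{L}(f)$, and the key asymptotic
\begin{align*}
\mathcal{N}(u_\tau)-\mathcal{N}(u_0)=-r\log\tau+O(1)\qquad(\tau\to+\infty)
\end{align*}
holds: the leading $-r\log\tau$ comes from $\det(u_{0,ij}+\tau f_{,ij})\sim\tau^r\det f_{,ij}$ by strict convexity of $f$, while the $\chi+4\sigma$ contribution stays uniformly bounded thanks to the crucial cancellation $\chi(x)+4\sigma(x)\to\mathrm{const}$ as $x\to\infty$ inside $\mathfrak{a}_+$. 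Therefore $\mu(u_\tau)=\mu(u_0)+\tau\mathcal{L}(f)-r\log\tau+O(1)$, and since $\mu$ is bounded below one is forced to $\mathcal{L}(f)>0$ strictly. For convex $W$-invariant piecewise linear $f$ I would then regularize by $f_\epsilon:=f*\rho_\epsilon+\epsilon|v|^2$ with $\rho_\epsilon$ a $W$-invariant mollifier; each $f_\epsilon$ is smooth and strictly convex, $W$-invariant, with $f_\epsilon\to f$ in $C^0(\overline{P}_+)$. After rewriting the $\sigma(\nabla u)$-term via integration by parts as $\int_{P_+}\sigma(\nabla u)\pi\,dv=-\int_{P_+}u\,\sigma(\nabla\pi)\,dv+\int_{\partial P_+}u\,\sigma(\nu)\pi\,d\sigma_0$, the functional $\mathcal{L}(\cdot)$ becomes continuous under $C^0$-convergence, and $\mathcal{L}(f)=\lim_{\epsilon\to 0}\mathcal{L}(f_\epsilon)\ge 0$.

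For the equality statement, decompose $f=f_{\mathrm{lin}}+g$ where $f_{\mathrm{lin}}$ is the affine part of $f$. Every $W$-invariant affine function on $(\mathfrak{a}')^*$ has the form $a^iv_i+c$ with $a\in\mathfrak{a}'_z$, so Lemma~\ref{Fut=0 and L=0} together with $\mathcal{L}(\mathrm{const})=0$ (immediate from (\ref{average-r})) gives $\mathcal{L}(f_{\mathrm{lin}})=0$, and hence $\mathcal{L}(f)=\mathcal{L}(g)$. If $g\not\equiv 0$, its distributional Hessian is a nontrivial positive matrix-valued measure supported on the creases; refining the asymptotic analysis using the distributional identity
\begin{align*}
V_P\,\mathcal{L}(g)=\int_{P_+}u_\ast^{ij}\,g_{,ij}\,\pi\,dv,
\end{align*}
derived by integrating the scalar curvature formula~(\ref{+S}) against $g\pi$, applying (\ref{boundary behavior of u^{ij}}) on the outer facets, and using the vanishing of $\pi$ on the Weyl walls together with the $W$-antisymmetry of the remaining $\chi+4\sigma$ corrections, the strict positivity of the matrix $u_\ast^{ij}\pi$ on the interior of $P_+$ forces $g_{,ij}\equiv 0$ as a measure, i.e.\ $g\equiv 0$.

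The main obstacle lies in making this last identity rigorous: the integrations by parts produce several $\chi_i(\nabla u_\ast)$-type boundary and bulk contributions, and their cancellation requires delicate asymptotic control of $\coth\alpha(\nabla u_\ast)$ both near the outer facets (where it tends to $1$ exponentially, using that $\alpha(u_A)>0$ for outer facets meeting $\mathfrak{a}_+^*$) and near the Weyl walls (where it is singular like $1/\alpha(v)$, but tamed by the quadratic vanishing of $\pi$), combined with the $W$-symmetric structure enforced by the $W$-invariance of $g$.
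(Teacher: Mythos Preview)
Your overall strategy for the inequality $\mathcal L(f)\ge 0$ is different from the paper's and essentially sound: rather than integrating the transverse cscK equation by parts, you invoke boundedness below of the K-energy (hence of $\mu$) and test along the ray $u_\tau=u_0+\tau f$. The paper instead works directly with the PDE $S^T(u_0)=\bar S$, integrates by parts on each linearity domain $P_\tau$ of $f$, and obtains the closed formula
\[
V_P\,\mathcal L(f)=\sum_{\tau_1<\tau_2}\int_{\partial P_{\tau_1}\cap\partial P_{\tau_2}}\frac{u_0^{ij}(a_{\tau_1}^i-a_{\tau_2}^i)(a_{\tau_1}^j-a_{\tau_2}^j)}{|a_{\tau_1}-a_{\tau_2}|}\,\pi\,d\sigma_0
+2\sum_\tau\sum_{\alpha\in R^+}\int_{P_\tau}(\coth\alpha(\nabla u_0)-1)\,\alpha(a_\tau)\,\pi\,dv,
\]
which is manifestly nonnegative because $a_\tau\in\overline{\mathfrak a'_+}$. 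Your route buys a conceptual shortcut at the cost of importing a lower bound on $\mathcal K$; the paper's route is self-contained and yields the exact equality case for free.

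There is, however, a genuine gap in your equality argument. The identity you write, $V_P\,\mathcal L(g)=\int_{P_+}u_\ast^{ij}g_{,ij}\,\pi\,dv$, is \emph{not} what the integration by parts actually produces: the $\chi_i(\nabla u_\ast)$ contributions do not cancel by any ``$W$-antisymmetry''. They survive and give precisely the second sum above, $2\sum_\tau\sum_\alpha\int_{P_\tau}(\coth\alpha-1)\alpha(a_\tau)\pi\,dv$, which is strictly positive whenever some $\alpha(a_\tau)>0$. This term is not a nuisance to be discarded; it is what forces the gradient to lie in $\mathfrak a'_z$ rather than merely in $\overline{\mathfrak a'_+}$. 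Concretely, take $f(v)=\max_{w\in W}\langle w\cdot\varpi,v\rangle$ for a fundamental weight $\varpi$: this $f$ is $W$-invariant, piecewise linear, convex, and \emph{affine on all of $P_+$} (so your crease integral vanishes), yet $\mathcal L(f)>0$ because $\alpha(\varpi)>0$ for some $\alpha$. Your argument, as written, would wrongly declare $\mathcal L(f)=0$ for such $f$. To repair this you must keep the $\chi$-term and recognize that it supplies the missing positivity exactly when $f$ has creases along Weyl walls but not inside $P_+$.
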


\begin{proof}
As before, we assume that $\gamma$ is chosen such that $P$ contains $O$. A convex $W$-invariant piecewise linear function $f$ on $P$ can be written as
$$f=\max_{1\leq N\leq N_0}\{f_N\},$$
where $f_N$ is $W$-invariant such that
$$f_N|_{P_+}(v)=a_{N}^iv_i+c_{N}$$
for some constant vector $a_N=(a_{N}^i)$. It is showed that $a_{N}\in\overline{\mathfrak a'_+}$ (cf. \cite[Proposition 3.4]{LZZ}). Then we can divide $P_+$ into $\tau_0$ sub-polytopes $P_1,...,P_{\tau_0}$ such that for each $\tau=1,...,\tau_0$, there is an $N(\tau)\in\{1,...,N_0\}$ with
$$f|_{P_\tau}=f_{N(\tau)}.$$
For simplicity, we write $f_\tau$ as $f_{N(\tau)}$.

On the other hand,  we may write  a $G$-Sasaki metric with constant transverse scalar curvature as
$\omega_g^T=\sqrt{-1}\partial\bar\partial\varphi_0$, where $\varphi_0$ is a $K\times K$-invariant function \cite{Boyer-Galicki-Simanca}.   By \eqref{+S}, we have
\begin{eqnarray}\label{0601}
\begin{aligned}
S^T(u_0)&=-{\frac1\pi}\left((u_0^{ij}\pi)_{,ij}+{\frac\partial{\partial v_i}\left(\pi\left.{\frac{\partial \chi}{\partial x^i}}\right|_{x=\nabla u_0}\right)}\right)\notag\\
&=\bar S.
\end{aligned}
\end{eqnarray}
Then, on each $P_\tau$,
\begin{eqnarray}\label{0602}
\begin{aligned}
&-\bar S\int_{P_\tau}f\pi\, dv\\&=\int_{P_\tau} \left((u_0^{ij}\pi)_{,ij}+{\frac\partial{\partial v_i}}\left(\pi\left.{\frac{\partial \chi}{\partial x^i}}\right|_{x=\nabla u_0}\right)\right)f\,dv.
\end{aligned}
\end{eqnarray}
Note that $f_{,ij}=0$ on each $P_\tau$. Taking integration by parts, we get
\begin{eqnarray*}
\begin{aligned}
\int_{P_\tau}(u_0^{ij}\pi)_{,ij}f\pi dv&=\int_{\partial P_\tau}(u^{ij}_{0,j}\nu_i\pi+u_0^{ij}\pi_{,i}\nu_j) f\,d\sigma_0\\
&-\int_{\partial P_\tau}u_0^{ij}\nu_if_{,j}\pi\,d\sigma_0
\end{aligned}
\end{eqnarray*}
and
\begin{eqnarray*}
\begin{aligned}
\int_{P_\tau}{\frac\partial{\partial v_i}}\left(\pi\left.{\frac{\partial \chi}{\partial x^i}}\right|_{x=\nabla u_0}\right)f\,d\sigma_0=
&\int_{\partial P_\tau}\nu_i\left.{\frac{\partial \chi}{\partial x^i}}\right|_{x=\nabla u_0}f\pi\,d\sigma_0\\&-\int_{p_\tau}\left.{\frac{\partial \chi}{\partial x^i}}\right|_{x=\nabla u_0}f_{,i}\pi\,dv.
\end{aligned}
\end{eqnarray*}
Plugging the above relations into \eqref{0602}, it follows
\begin{eqnarray*}
\begin{aligned}
-\bar S\int_{P_\tau}f\pi\, dv
&=\int_{\partial P_\tau} \left(u_{0,j}^{ij}\nu_i\pi+u_0^{ij}\pi_{,i}\nu_j+\nu_i\pi\left.{\frac{\partial \chi}{\partial x^i}}\right|_{x=\nabla u_0}\right)f\,d\sigma_0\\
&-\int_{\partial P_\tau}u_0^{ij}\nu_if_{,j}\pi\,d\sigma_0-\int_{P_\tau}\left.{\frac{\partial \chi}{\partial x^i}}\right|_{x=\nabla u_0}f_{,i}\pi\,dv.
\end{aligned}
\end{eqnarray*}
Thus summing over $\tau$, using \eqref{boundary behavior of u^{ij}} and the argument of \cite[Proposition 2.2]{ZZ1},
we obtain
\begin{eqnarray}\label{0603}
\begin{aligned}
-\bar S\int_{P_+}f\pi\, dv&=\sum_{\tau_1<\tau_2}\int_{\partial P_{\tau_1}\cap\partial P_{\tau_2}}{\frac{u_0^{ij}(a_{\tau_1}^i-a_{\tau_2}^i)(a_{\tau_1}^j-a_{\tau_2}^j)}{|a_{\tau_1}-a_{\tau_2}|}}\pi\,d\sigma_0\\
&-\sum_A\Lambda_A\int_{\mathfrak F'_A\cap\partial P_+}f\langle v,\nu_A\rangle\pi\,d\sigma_0-\sum_\tau\int_{P_\tau}\left.{\frac{\partial \chi}{\partial x^i}}\right|_{x=\nabla u_0}a_{\tau}^i\pi\,dv.
\end{aligned}
\end{eqnarray}

Recall \eqref{L(u)}.  We see that
\begin{eqnarray}\label{0604}
\begin{aligned}
V_P\cdot\mathcal L(f)&=\sum_A\Lambda_A\int_{\mathfrak F'_A\cap\partial P_+}f\langle v,\nu_A\rangle\pi\,d\sigma_0-\bar S\int_{P_+}f\pi\,dv\\
&-4\sum_\tau\int_{P_\tau}\sigma(a_{\tau})\pi\,dv.
\end{aligned}
\end{eqnarray}
Note that for any $a_\tau=(a_{\tau}^i)\in\overline{\mathfrak a'_+}$,
\begin{eqnarray*}
-a_\tau^i{\frac{\partial \chi}{\partial x^i}}-4\sigma_ia_\tau^i=2\sum_{\alpha\in\Phi_+}(\coth\alpha(x)-1)\alpha(a_\tau)\geq0,~\forall~x\in\mathfrak a_+.
\end{eqnarray*}
Hence, plugging \eqref{0603} into \eqref{0604}, we derive
\begin{align}\label{positive-linear}
V_P\cdot\mathcal L(f)&=\sum_{\tau_1<\tau_2}\int_{\partial P_{\tau_1}\cap\partial P_{\tau_2}}{\frac{u_0^{ij}(a_{\tau_1}^i-a_{\tau_2}^i)(a_{\tau_1}^j-a_{\tau_2}^j)}{|a_{\tau_1}-a_{\tau_2}|}}\pi\,d\sigma_0\notag\\
&+2\sum_\tau\sum_{\alpha\in\Phi_+}\int_{P_\tau}(\coth\alpha(x)-1)\alpha(a_\tau)\pi\,dv\geq0.
\end{align}

It is easy to see that the equality in (\ref{positive-linear}) holds if and only there is an $a=(a^i)\in\overline{\mathfrak a'_+}$ such that
\begin{eqnarray}\begin{aligned}
a_\tau=a,~\forall~\tau\notag
\end{aligned}\end{eqnarray}
and
\begin{eqnarray}\begin{aligned}
\alpha(a)=0,~\forall~\alpha\in\Phi_+.\notag
\end{aligned}\end{eqnarray}
The second relation means that $a\in\mathfrak a_z'$. The proposition is proved.
\end{proof}

\begin{proof}[Proof of necessary part of Theorem \ref{main thm}]
Suppose that \eqref{0316} does not hold. Choosing $\gamma=\gamma_0$. Then
\begin{eqnarray*}
bar(P_+)-{\frac2{n+1}}\sigma\not\in\Xi.
\end{eqnarray*}
We will follow a way in \cite[Lemma 3.4]{LZZ} to construct a piecewise linear function. By \eqref{5.8+}, we may assume
$$bar(P_+)-{\frac2{n+1}}\sigma\in\mathfrak (a')_{ss}^*,$$
otherwise the Futaki invariant does not vanishes. Let $\{\alpha_{(1)},...,\alpha_{(r')}\}$ be the simple roots in $\Phi_+$. Without loss of generality, we can write
$$bar(P_+)-{\frac2{n+1}}\sigma=\lambda_1\alpha_{(1)}+...+\lambda_{r'}\alpha_{(r')},$$
where $\lambda_1\leq0$. Let $\{\varpi_i\}$ be the fundamental weights for $\{\alpha_{(1)},...,\alpha_{(r')}\}$ such that ${\frac{2\langle \varpi_i,\alpha_{(j)}\rangle}{|\alpha_{(j)}|^2}}=\delta_{ij}$. Define a $W$-invariant rational piecewise linear function $f$ on $P$ by
$$f(v)=\max_{w\in W}\{\langle w\cdot \varpi_1,v\rangle\}.$$
Then
$$f|_{P_+}=\langle \varpi_1,v\rangle.$$
Note that $\varpi_1\in\mathfrak   (a')_{ss}^*$. However,
\begin{eqnarray*}
\mathcal L(f)=n(n+1)|\alpha_{(1)}|^2\lambda_1\leq0.
\end{eqnarray*}
This contradicts to Proposition \ref{ness}. Hence \eqref{0316} is true.
\end{proof}

To prove the sufficient part of Theorem \ref{main thm}, we need the following lemma.

\begin{lem}\label{J normalization}
For any $\psi\in\mathcal H_{K\times K}\left(\frac{1}{2}d\eta\right)$ with $u_\psi\in\hat{\mathcal C}_{P,W}$, there exists a uniform constant $C$ such that
\begin{eqnarray*}
\left|J(\psi)-{\frac{1}{V_P}}\int_{P_+} u_\psi dv \right|\leq C.
\end{eqnarray*}
\end{lem}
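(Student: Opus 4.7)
The plan is to translate both $J(\psi)$ and the polytope integral to integrals over the open orbit via Lemma \ref{KAK int} and Legendre duality, and then compare them. Writing $\varphi_\psi=\varphi_0+\psi$ for the $K\times K$-invariant K\"ahler potential on the open orbit and $u_\psi$ for its Legendre dual on $P_+$, the change of variables $v=\nabla\varphi_\psi(x)$ combined with Lemma \ref{KAK int} converts any integral of the form $\frac{1}{V}\int_M F\,(\tfrac{1}{2}d\eta_\psi)^n\wedge\eta_\psi$ into $\frac{1}{V_P}\int_{P_+}\tilde F(\nabla u_\psi(v))\,\pi(v)\,dv$, thanks to $V=C_0V_P$ computed in the proof of Proposition \ref{mu(u)}.

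First I would apply this to $F=\psi$ and use the Legendre identity $\psi(\nabla u_\psi(v))=\langle v,\nabla u_\psi(v)\rangle-u_\psi(v)-\varphi_0(\nabla u_\psi(v))$ to write $I(\psi)$ as an explicit polytope integral. Then, along the linear path $\psi_s=s\psi$ with the variational formula $\dot u_{s\psi}(v)=-\psi(\nabla u_{s\psi}(v))$, I would compute $J(\psi)=\int_0^1 s^{-1}I(s\psi)\,ds$; after telescoping in $s$, this should yield
$$J(\psi)=\frac{1}{V_P}\int_{P_+}(u_\psi-u_0)\,\pi\,dv+R(\psi),$$
where $R(\psi)$ collects a boundary-type contribution and terms depending only on $\varphi_0$ and $u_0$.

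For the uniform estimate of $R(\psi)$ I would invoke three ingredients. First, Lemma \ref{Legendre functions} provides $u_\psi-u_G\in C^\infty(\overline P)$ with $u_G$ the explicit singular part shared by every $u_\psi$, so that $u_\psi-u_0$ extends continuously to $\overline P_+$ up to a bounded remainder and the reference contribution $\frac{1}{V_P}\int_{P_+}u_0\,\pi\,dv$ is an $O(1)$ constant absorbed into $C$. Second, the normalization $u_\psi\ge u_\psi(O)=0$ from $\hat{\mathcal C}_{P,W}$ forbids $u_\psi$ from going to $-\infty$ and, together with $W$-invariance, forces the gradient $\nabla u_\psi$ to vanish at $O$. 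Third, convexity combined with the boundary integral estimate (\ref{0502+}) then converts the polytope $L^1$-data to uniform control by boundary integrals.

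The main obstacle is controlling the piece $\int_{P_+}\varphi_0(\nabla u_\psi(v))\,\pi(v)\,dv$ which arises in the Legendre reduction, because $\nabla u_\psi(v)$ can escape to infinity both along Weyl walls and toward the facets of $P_+$. Here I would use that $\varphi_0=\frac12\log\hat F_0$ has only linear growth on $\mathfrak a'_+$ in directions normal to the facets of $P$, matched via Legendre duality to the boundedness of the affine forms $l'_A$ on those facets; the residual boundary contributions are then absorbed using the boundary asymptotics of $u_\psi^{ij}$ from (\ref{boundary behavior of u^{ij}}) and the convexity estimate (\ref{0502+}), exactly in the spirit of the analogous K\"ahler argument of \cite{LZZ}.
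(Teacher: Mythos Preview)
Your decomposition $J(\psi)=\frac{1}{V_P}\int_{P_+}(u_\psi-u_0)\,\pi\,dv+R(\psi)$ is correct, and in fact the paper states directly (via Lemma~\ref{KAK int}) that
\[
J(\psi)=\frac{1}{V}\int_M\psi\,(d\eta)^n\wedge\eta+\frac{1}{V_P}\int_{P_+}(u_\psi-u_0)\,\pi\,dv,
\]
so your remainder is \emph{exactly} $R(\psi)=\frac{1}{V}\int_M\psi\,(d\eta)^n\wedge\eta$. The gap in your proposal is in how you bound $R(\psi)$.

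You try to control $R(\psi)$ purely on the polytope side, which forces you to deal with $\int_{P_+}\varphi_0(\nabla u_\psi(v))\,\pi\,dv$. But $\varphi_0$ is only defined on the open orbit and is unbounded there; your sketch invoking ``linear growth of $\varphi_0$,'' ``boundedness of $l'_A$ on facets,'' and the asymptotics (\ref{boundary behavior of u^{ij}}) does not produce a two-sided uniform bound, because $\nabla u_\psi$ can escape to infinity on a set of positive measure in ways that depend on $\psi$. The estimate (\ref{0502+}) controls $\int u$ by $\partial P_+$ data, but does not help with $\varphi_0(\nabla u_\psi)$.

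The paper avoids this entirely by working on the compact manifold $M$ rather than the polytope. The normalization $u_\psi(O)=0$, $\nabla u_\psi(O)=0$ pins down $\psi(O)=-\varphi_0(O)$, a fixed constant. Since $\psi$ is basic, $\Delta_B\psi=\Delta_g\psi$, so $\Delta_g\psi$ is bounded below; the Green function estimate on the compact Riemannian manifold $(M,g)$ (as in \cite[Lemma 2.2]{ZZ}) then gives $\frac{1}{V}\int_M\psi\geq\sup_M\psi-C_0\geq-\varphi_0(O)-C_0$. For the upper bound, $\xi(\psi)=0$ forces $|\nabla\psi|=|\nabla\psi|_{\mathrm{Orb}_M(p)}|\leq|\nabla\varphi_0|+|\nabla\varphi_\psi|\leq 2\,\mathrm{diam}(P)$, which together with the same Green-function argument yields $\sup_M\psi\leq C'$. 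Combining the two gives $|R(\psi)|\leq C$. The key idea you are missing is to exploit the compactness of $M$ through the Green function; this replaces your delicate polytope analysis with a clean global estimate.
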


\begin{proof}First by Lemma \ref{KAK int}, we have
\begin{eqnarray*}
J(\psi)={\frac{1}{V}}\int_M\psi\,(d\eta)^n\wedge\eta+{\frac{1}{V_P}}\int_{P_+}(u_\psi-u_0)\pi\,dv.
\end{eqnarray*}
Then the lemma is reduced to prove
\begin{eqnarray}\begin{aligned}\label{normalized bounded}
\left|\int_M\psi\,(d\eta)^n\wedge\eta\right|\le C,~\forall~u_\psi\in\hat{\mathcal C}_W.\end{aligned}\end{eqnarray}

By the normalized condition, it follows
\begin{eqnarray*}
\nabla u_{\psi}(O)=O,~u_{\psi}(O)=0.
\end{eqnarray*}
Thus
\begin{eqnarray*}
\psi(O)=-\varphi_0(O).
\end{eqnarray*}
On the other hand, since $\psi$ is a basic function,
\begin{eqnarray*}
\bigtriangleup_B\psi=\bigtriangleup_g\psi.
\end{eqnarray*}
This means that the basic Laplace operator coincides with the Laplace operator of $g$  on $\psi$. Thus by using the above two estimates and following the Green function argument in \cite[Lemma 2.2]{ZZ}, we can obtain a uniform $C_0$ such that
\begin{align}\label{average-lower}
{\frac1{V_P}}\int_M\psi(d\eta)^n\wedge\eta\geq\sup_M\psi-C_0\geq -\psi_0(O)-C_0.
\end{align}

On the other hand, by $\xi(\psi)=0$, we have
\begin{eqnarray*}
|\nabla \mathbf \psi|=|\nabla\mathbf \psi|_{Orb_M(p)}|.
\end{eqnarray*}
It follows that
\begin{eqnarray*}
|\nabla\mathbf \psi|\leq|\nabla\mathbf \varphi_0|+|\nabla \varphi|\leq2\text{diam}(P).
\end{eqnarray*}
Then by an argument in \cite[Lemma 2.2]{ZZ} and (\ref{average-lower}), we  get
\begin{align}\label{maximal}
\sup_M\psi\leq C'
\end{align}
for some large constant $C'$. Hence combining (\ref{average-lower}) and (\ref{maximal}), we obtain (\ref{normalized bounded}).
\end{proof}

\begin{proof}[Proof of sufficient part of Theorem \ref{main thm}]
First, we note that (\ref{0316+}) is equivalent to (\ref{0316}) by the relation (\ref{0306}). On the other hand,
by Lemma \ref{normalization of u}, we see that
there is a $\sigma\in Z(K')$ such that $ \hat u\in \hat{\mathcal C}_{P,W}$ for any $\psi\in\mathcal H_{K\times K}\left(\frac{1}{2}d\eta\right)$, where $\hat u$ is the Legendre function of $\varphi_{\psi_{\sigma}}$. Then by Proposition \ref{main-proposition} and Lemma \ref{J normalization}, we get
\begin{align}\label{proper-maximal-group} \mathcal K(\psi)=\mu( \hat u) &\ge \delta \int_{P_+} \hat u \pi(y)\,dy -C_\delta\notag\\
&\ge \delta J(\psi_\sigma)-C_\delta'\notag\\
&\ge \delta \inf_{\tau\in Z(H)} J(\psi_\tau)-C_\delta'.
\end{align}
(\ref{proper-maximal-group}) means that $\mathcal K(\cdot)$ is proper on $\mathcal H_{K'\times K'}\left(\frac{1}{2}d\eta\right)$ modulo $Z(H)$. Hence, by Proposition \ref{proper implies existence}, we prove the existence of $G$-Sasaki Einstein metrics.
\end{proof}

\begin{proof}[Proof of Corollary \ref{corollary-strong-sasaki}] By the necessary part of Theorem \ref{main thm},
(\ref{0316+}) holds. Then as in the proof for the sufficient part of Theorem \ref{main thm} above, for any $\psi\in\mathcal H_{K\times K}\left(\frac{1}{2}d\eta\right)$, (\ref{conjecture-strong}) holds with $Z'(T^{c})$ chosen as $Z(H)$. The corollary is proved.

\end{proof}

\section{  $G$-Sasaki Ricci solitons }

In this section, we give a version of Theorem \ref{main thm} for the existence problem of transverse Sasaki-Ricci solitons. As a generalization of transverse Sasaki-Einstein metrics, a Sasaki metric $(M, \frac{1}{2}d\eta)$ is called a \emph{transverse Sasaki-Ricci soliton} if there is an $X\in\mathfrak{ham}(M)$ such that (cf. \cite{Futaki-Ono-Wang, Martelli-Sparks-Yau, Martelli-Sparks-Yau 2, Boyer}, etc.)
$${\rm Ric}^T(g)-2(n+1)\omega_g^T=L_X\omega_g^T.$$
Clearly, ${\frac12}[d\eta ]_B={\frac{\pi}{n+1}}c_1^B(M)$ by the definition.
It has been showed that on a compact Sasaki manifold the soliton vector $X$ is determined by vanishing of the modified Futaki invariant (cf. \cite[Proposition 5.3]{Futaki-Ono-Wang}),
\begin{eqnarray}\label{mod fut}
{\rm Fut}_X(Y)=-\int_M u_Ye^{u_X}(\frac{1}{2}d\eta)^n\wedge\eta,~\forall v\in\mathfrak{ham}(M).
\end{eqnarray}
On a $G$-Sasaki manifold, by restricting the metric to the $H_0$-orbit as in Section 5, (\ref{mod fut}) is equivalent to
\begin{eqnarray}\label{X-potential}
\int_{P_+}Y^iv_ie^{X^iv_i}\pi\,dv=0,~\forall~ Y=(Y^i)\in\mathfrak z(\mathfrak h_0).
\end{eqnarray}
In particular, $X=(X^i)\in\mathfrak z(\mathfrak h_0)$.

Define a weighted barycentre with respect to $X$ by
$$bar_X(\mathcal P_+)=\frac{\int_{\mathcal P_+}y e^{X^iy_i}\pi\,d\sigma_c}{\int_{\mathcal P_+} e^{X^iy_i}\pi\,d\sigma_c}.$$
We get a soliton version of Theorem \ref{main thm} as follows.

\begin{theo}\label{soliton theorem}
Let $(M, \frac{1}{2}d\eta)$ be a $(2n+1)$-dimensional compact $G$-Sasaki manifold with $\omega_g^T\in
\frac{\pi}{n+1}c_1^B(M)>0$. Then $M$ admits a transverse Sasaki-Ricci soliton if and only if
$bar_X(\mathcal P_+)$ satisfies
\begin{align}\label{0316=}
bar_X(\mathcal P_+)-{\frac{2}{n+1}}\sigma+{\frac{1}{n+1}}\gamma_0\in\Xi.
\end{align}
\end{theo}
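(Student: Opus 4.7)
The proof follows the two-part blueprint of Theorem \ref{main thm}, replacing the Lebesgue measure on $\mathcal{P}_+$ (and on $P_+$) with the weighted measure $e^{X^iv_i}\pi\,dv$, where $X$ is the soliton candidate. By \eqref{X-potential} and the strict convexity of $X\mapsto\log\int_{P_+}e^{X^iv_i}\pi\,dv$ on $\mathfrak z(\mathfrak h_0)$, the modified Futaki invariant vanishes along $\mathfrak z(\mathfrak h_0)$ for a \emph{unique} $X\in\mathfrak z(\mathfrak h_0)$, and this $X$ is the soliton vector field one must use.

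For the necessary part, I would establish a weighted analog of Proposition \ref{ness}: for every convex $W$-invariant piecewise linear function $f$ on $P$, the weighted linear functional
\begin{align*}
\mathcal{L}_X(f) &= \sum_{A}\Lambda_A \int_{\mathfrak{F}_A'\cap P_+} f\,\langle v,\nu_A\rangle\, e^{X^iv_i}\pi\,d\sigma_0 \\
&\quad - 2n(n+1)\int_{P_+} f\,e^{X^iv_i}\pi\,dv - 4\int_{P_+}\sigma(\nabla f)\,e^{X^iv_i}\pi\,dv
\end{align*}
is nonnegative, with equality iff $f$ is affine with slope in $\mathfrak z(\mathfrak h_0)$. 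The computation mirrors Proposition \ref{ness}: the transverse Sasaki-Ricci soliton equation rewritten on $P_+$ reads $S^T(u_0)-\bar S=X^iv_{0,i}$ (up to weight factors arising from $\pi$ and $\chi$), and integration by parts of $\mathcal{L}_X(f)$ against the piecewise decomposition $f=\max_\tau f_\tau$ yields the same square-plus-$(\coth-1)$ positive terms appearing in \eqref{positive-linear}, now multiplied throughout by $e^{X^iv_i}$. Once this positivity is in hand, failure of \eqref{0316=} produces, via the fundamental-weight construction used in the Einstein case, a convex piecewise linear $f$ with $\mathcal{L}_X(f)<0$, a contradiction.

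For the sufficient part, the modified K-energy $\mathcal{K}_X(\cdot)$ reduces under the KAK-formula of Lemma \ref{KAK int} to a weighted functional
$$\mu_X(u)=\frac{1}{2(n+1)}\mathcal{L}_X(u)+\mathcal{N}_X(u),\quad u\in\hat{\mathcal{C}}_{P,W},$$
where $\mathcal{N}_X$ is the nonlinear part of \eqref{k-energy-c1} with $\pi\,dv$ replaced by $e^{X^iv_i}\pi\,dv$. Since $P$ is bounded, $f(t)=e^t$ trivially satisfies (W1)--(W3) on the range of $v\mapsto X^iv_i$, so Theorem \ref{proper principle} applies with $a=X$ and $\Lambda_L=(2(n+1))^{-1}$; after the translation $v'=v+\tfrac{1}{n+1}\iota^*(\gamma_0)$ used in Section~5, hypothesis \eqref{0316=} becomes exactly the weighted barycenter condition \eqref{weighted bar condition}. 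This gives properness of $\mu_X$, and via Lemma \ref{J normalization} properness of $\mathcal{K}_X(\cdot)$ on $\mathcal{H}_{K\times K}(\tfrac{1}{2}d\eta)$ modulo $Z(H_0)$.

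The main obstacle is the soliton analog of Proposition \ref{proper implies existence}: converting this properness into an actual transverse Sasaki-Ricci soliton. Following the Tian-Zhu scheme, I would replace \eqref{0210} by the continuity family
$$\det(g^T_{i\bar j}+\psi_{,i\bar j})=\exp(-2t(n+1)\psi-X(\psi)+h)\det(g^T_{i\bar j}),\quad t\in[0,1],$$
reducing openness and convergence to a uniform $C^0$-bound for $\psi_t$, $t\in[t_0,1]$. Along this path one checks, as in the proof of Proposition \ref{proper implies existence}, that $\psi_t$ realizes the infimum of $I-J$ over $\mathrm{Aut}_0^T(M)$ so that the properness modulo $Z(H_0)$ bounds $I(\psi_t)$; Zhang's Moser iteration, adapted to basic functions on $M$, then yields the required $C^0$ estimate. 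Combining this with the openness at $t=1$ (by nondegeneracy of the linearized operator on the $X$-weighted basic function space) completes the proof.
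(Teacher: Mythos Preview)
Your proposal is correct and follows essentially the same approach as the paper: the paper's proof of Theorem \ref{soliton theorem} is explicitly a sketch instructing the reader to rerun the two-part argument of Theorem \ref{main thm} with the weight $f_a(v)=e^{X^iv_i}$ in Theorem \ref{proper principle}, invoking the modified K-energy as in \cite{TZ02, CTZ, WZZ, LZZ}. Your blueprint---weighted analog of Proposition \ref{ness} for the necessary part, Theorem \ref{proper principle} with $f(t)=e^t$ for properness of the reduced modified K-energy, and a Tian--Zhu continuity path together with the minimizing property of $\psi_t$ for $(I-J)$ to convert properness into existence---is exactly the intended completion of that sketch.
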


Similar to K\"ahler geometry, one can introduce a modified K-energy on $\mathcal H\left(\frac{1}{2}d\eta\right)$ as in \cite{TZ02, CTZ, WZZ, LZZ}, etc.. We note that an analogy of Theorem \ref{soliton theorem} for K\"ahler-Einstein $G$-manifolds has been recently estibalished in \cite{Del3} and \cite{ LZZ}, respectively. By following the argument in \cite{ LZZ}, one can extend the proof of Theorem \ref{main thm} to Theorem \ref{soliton theorem} by taking $f_a(v)=f_X(v)=e^{X^iv_i}$ in Theorem \ref{proper principle}. We left the details to the reader.

\subsection{Deformation of transverse Sasaki-Ricci solitons }
In \cite{Martelli-Sparks-Yau, Martelli-Sparks-Yau 2}, Martelli, Sparks and Yau introduced the deformation theory of Reeb vector fields $\xi$ on a compact Sasaki manifold. They showed that the volume of $M$  in fact depends only on $\xi$. Moreover, they proved that under the restriction of (\ref{transverse fano}) the Sasaki structure $(M,g,\xi,\eta)$ has the vanishing Futaki invariant if $\xi$ is a critical point of ${\rm Vol}(M,g)$. In particular, by applying their theory together with the Futaki-Ono-Wang's result for the existence of transverse Sasaki-Ricci solitons on toric Sasaki manifolds, one will obtain a deformation theorem for transverse toric Sasaki-Ricci solitons. We want to extend such a  theorem to $G$-Sasaki Ricci solitons. However, unlike the toric Sasaki manifolds, we need to overcome the obstruction condition
(\ref{0316=}).

Analogous to \cite{Martelli-Sparks-Yau}, we deform $\xi$ in $\mathfrak z(\mathfrak k)$ and see that $\xi$ must be in an open convex cone
$$\Sigma=\mathfrak C^\vee\cap\mathfrak a_z,$$
where $\mathfrak C^\vee$ is the interior of the dual cone of $\mathfrak C$. Fix a $\xi'\in\Sigma$, by Proposition \ref{sym potential}, there is a function $\rho_{\xi'}$ defined on $Z$ such that
\begin{itemize}
\item[(1)] $F_{\xi'}=\frac12\rho^2_{\xi'}$ is the Legendre function of $U_0^{\xi'}$;
\item[(2)] $\omega'=\sqrt{-1}\partial\bar\partial F_{\xi'}$ is a K\"ahler cone metric on $Z$. Thus $\{\rho_{\xi'}=1\}\cap Z$ is a toric Sasaki manifold.
\end{itemize}
Note that the complex structure of $C(M)$ does not change. By Proposition \ref{g-sasaki-structure} we see that there is a $G$-Sasaki manifold $M'$, which is diffeomorphic to $M$ and whose K\"ahler cone is $(C(M),\omega')$. Hence we get a Sasaki structure $(M,g',\xi',\eta')$ \cite[Section 3]{Boyer-Galicki 2006}.

By Proposition \ref{Fano condition}, we see that the Sasaki structure $(M,g',\xi',\eta')$ satisfies (\ref{transverse fano}) if and only if $\xi'\in \Sigma_O$, where $\Sigma_O$ is defined by (\ref{xi-condition}).

\begin{proof}[Proof of Theorem \ref{soliton deformation}]
By a change of variables $v=\iota^*(y)$, (\ref{X-potential}) is equivalent to
\begin{eqnarray}\label{X condition}
 (bar_X(\mathcal P_+)+\frac1{n+1}\gamma_0)(Y)=0,~\forall ~Y\in\mathfrak a_z.
\end{eqnarray}
Choose coordinates $y_1,...,y_{r+1}$ on $\mathfrak a_z$ such that $y_1,...,y_r$ are the coordinates on ${\rm ker}(\gamma_0)$. Then (\ref{X condition}) is equivalent to
\begin{eqnarray*}%\label{X condition 2}
\Psi_i(X,\xi):=\int_{\mathcal P_+}y_ie^{X^ky_k}\pi\,d\sigma_c=0,~i=1,...,r.
\end{eqnarray*}
Taking derivatives of the above $\Psi_i$'s with respect to $X^1,...,X^r$, we have
\begin{eqnarray*}%\label{X condition 2}
\frac{\partial \Psi_i}{\partial X^j}=\int_{\mathcal P_+}y_iy_je^{X^ky_k}\pi\,d\sigma_c,
\end{eqnarray*}
which is a strictly positive definite $(r\times r)$-matrix \cite[Lemma 2.2]{TZ02}. Since $\Xi$ is open in $\mathfrak a_{+,ss}$, the condition (\ref{0316=}) will keep on when $\xi\in\Sigma_O$ is sufficiently close to $\xi_0$. Hence the theorem follows from Theorem \ref{soliton theorem} immediately.
\end{proof}

\section{Examples}\label{examples}

In this section, we give several  examples of $G$-Sasaki manifolds and verify the existence of $G$-Sasaki Einstein metrics
or $G$-Sasaki Ricci solitons on them.

\begin{exa}\label{general exa}
Let $(M',\omega')$ be a  Fano manifold and $M$ the Kobayashi regular principle $S^1$-bundle over $M'$. Then $M$ is a regular Sasaki manifold.
\end{exa}

The Kobayashi regular principle $S^1$-bundle over a K\"ahler manifold was constructed in \cite{Kobayashi 1}. Boyer-Galicki \cite[Theorem 7.5.2]{Boyer} showed that $M$ is a regular Sasaki manifold with whose Reeb field is induced by the corresponding $S^1$-action. Furthermore, the contact form $\eta$ satisfies $\frac{1}{2} d\eta=\pi^*\omega'$ (cf. \cite[Sect. 6.7.2]{Blair}, \cite{Hatakeyama}), where $\pi$ is the projection to $M'$. Thus $M$ admits a Sasaki-Einstein metric if $(M',\omega')$ admits a K\"ahler-Einstein metric (cf. \cite[Corollary 2.1]{Boyer-Galicki 2000}).

Moreover, if $M'$ is a Fano compactification of a connect reductive group $ H$ and the $H\times H$-action can be lifted to $M$ as a bundle isomorphism,    $H\times H$ is  a  subgroup of ${\rm Aut}^T(M)$. Thus $M$ is a $G$-Sasaki manifold with
\begin{eqnarray*}
G=(H \times \mathbb C^*)\slash   {\rm diag}( H\cap \mathbb C^*).
\end{eqnarray*}

\begin{exa}\label{product exa}
Let $(M_i^{2n_i+1},g_i,\xi_i),i=1,2$ be two compact Sasaki manifolds and $(C(M_i^{2n_i+1}),\bar g_i)$ be their K\"ahler cones, respectively. Let $\omega_{\bar g_i}=\frac{\sqrt{-1}}2\partial\bar\partial \rho_i^2$  be their corresponding K\"ahler cone metrics. Take $\rho=\sqrt{\rho^2_1+\rho^2_2}$ on the product $C(M_1)\times C(M_2)$ and let $M=\{\rho=1\}$ be the corresponding level set. Then $\bar g$ is a K\"ahler metric associated to $\omega=\frac{\sqrt{-1}}2\partial\bar\partial \rho^2$ and $(M,g=\bar g|_M)$ is a Sasaki manifold.
\end{exa}

It can be verified that $\xi=\xi_1+\xi_2$ is the Reeb field of $(M,g)$. If we further assume that each $M_i$ is a $G_i$-Sasaki manifold, then it is obvious that $M$ is a $G_1\times G_2$-Sasaki manifold. Furthermore, the moment cone of $(M,g)$ is given by
\begin{eqnarray*}
\mathfrak C=\mathfrak C_1\times \mathfrak C_2,
\end{eqnarray*}
where $\mathfrak C_i$ is the moment cone of $(M_i,g_i)$. The normal vectors of facets of $\mathfrak C$ are all given by $u_{A(i)}$, where $u_{A(i)}$'s are normals of facets of $\mathfrak C_i$, considered as vectors in the product space. Thus if $\omega_{g_i}^T\in  \frac{\pi}{n_i+1} c_1^B(M_i)$, then $\omega_{g}^T\in  \frac{\pi}{n_1+n_2+2} c_1^B(M)$. Moreover, $\gamma_0=\gamma_{01}+\gamma_{02}$, where $\gamma_0\in(\mathfrak a_{1z}^*+\mathfrak a_{2z}^*)$, $\gamma_{0i}\in\mathfrak a_{iz}^*$ are determined in Proposition \ref{Fano condition} with respect to $M, M_i$, respectively.

The characteristic polytope of $(M,g)$ is given by
\begin{eqnarray*}
\mathcal P&=&\{y=(y_1,y_2)|~\xi(y)=1\}\\
&=&\cup_{t\in[0,1]}\{t\mathcal P_1+(1-t)P_2\},
\end{eqnarray*}
where $\mathcal P_i$ is the characteristic polytope of $(M_i,g_i)$ embedded in the product cone $\mathfrak C$. Then we have
\begin{eqnarray*}
bar(\mathcal P)&=&\frac{\int_{[0,1]\times \mathcal P_1\times\mathcal P_2}(ty_1,(1-t)y_2)t^{n_1}(1-t)^{n_2}\pi_1(y_1)\pi_2(y_2)}{\int_{[0,1]\times \mathcal P_1\times\mathcal P_2}t^{n_1}(1-t)^{n_2}\pi_1(y_1)\pi_2(y_2)}\\
&=&\left(\frac{n_1+1}{n_1+n_2+2}bar(\mathcal P_1),\frac{n_2+1}{n_1+n_2+2}bar(\mathcal P_2)\right).
\end{eqnarray*}
Thus $M$ admits a transverse $G$-Sasaki Einstein metric if and only if both $M_i$ do.  Hence, by Theorem \ref{soliton deformation}, we may deform to a family of non-product transverse $G$-Sasaki Ricci solitons from a product transverse $G$-Sasaki Einstein metric $(M,\xi)$.

\begin{exa}\label{exa1}
Let $K=U(2)$ and $G=GL_2(\mathbb C)$. Identify $\mathbb C^4\backslash\{O\}$ with the set of non-zero $2\times2$ complex matrixes $M_{2\times2}(\mathbb C)\backslash\{O\}$. For any $A\in \mathbb C^4\backslash\{O\}$, define $$\rho^2(A)={\rm tr}(A\bar A^T).$$
Then we get a $GL_2(\mathbb C)$-Sasaki manifold
$$S^7(1)=\{A\in \mathbb C^4\backslash\{O\}|\rho(A)=1\},$$
which is the standard Euclidean sphere.
\end{exa}

\begin{figure}[hbp]
\centering
\includegraphics[height=1.5in]{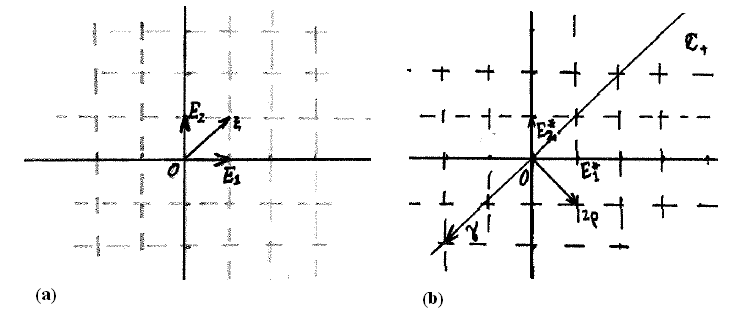}
\caption{The lattice (a) $\mathfrak a$ and (b) $\mathfrak a^*$.}
\end{figure}

It is easy to see that ${\frac{\sqrt{-1}}2}\partial\bar\partial\rho^2$ is the standard Euclidean metric on $\mathbb C^4$, thus $S^7(1)$ is the standard unit sphere. In the following, we verify that $S^7(1)$ is a $G$-Sasaki Einstein metric. We consider the $GL_2(\mathbb C)\times GL_2(\mathbb C)$ action on $\mathbb C^4\backslash\{O\}$ given by
\begin{eqnarray*}
(GL_2(\mathbb C)\times GL_2(\mathbb C))\times (\mathbb C^4\backslash\{O\})&\to&\mathbb C^4\backslash\{O\}\\
((X_1,X_2),Z)&\to&X_1ZX_2^{-1}.
\end{eqnarray*}
Then $\mathbb C^4\backslash\{O\}$ satisfies Definition \ref{definition} (1). Obviously, $\rho$ is $K\times K$-invariant. By a direct computation, we have
$$\mathfrak g=\text{Span}_{\mathbb C}\left\{\left(\begin{aligned}1& &0\\0& &0\end{aligned}\right),\left(\begin{aligned}0& &0\\1& &0\end{aligned}\right),\left(\begin{aligned}0& &1\\0& &0\end{aligned}\right),\left(\begin{aligned}0& &0\\0& &1\end{aligned}\right)\right\}$$
and
$$\mathfrak t^c=\text{Span}_{\mathbb C}\left\{\left(\begin{aligned}1& &0\\0& &0\end{aligned}\right),\left(\begin{aligned}0& &0\\0& &1\end{aligned}\right)\right\}.$$
The Reeb  vector field $\xi$ is given by $\xi=\left(\begin{aligned}\sqrt{-1}& &0\\0& &\sqrt{-1}\end{aligned}\right)$, which satisfies Definition \ref{definition} (3).%thus $-J\xi=\left(\begin{aligned}1& &0\\0& &1\end{aligned}\right)$.

We choose a maximal torus
\begin{eqnarray*}
T^c=\left\{\left(\begin{aligned}e^z& &0\\0& &e^{-z} \end{aligned}\right)|z\in \mathbb C^*\right\}.
\end{eqnarray*}
Then the restriction of ${\frac12}\rho^2$ on it is given by
\begin{eqnarray*}
{\frac12}\rho^2(z)={\frac12}(|e^z|^2+|e^z|^{-2}).
\end{eqnarray*}
Choose $E_1,E_2$ as the generators of $\mathfrak a$ and $E_1^*$, $E_2^*$ be their dual in $\mathfrak a_*$, we see that the lattice of characters of $G$ is generated by
$E_1^*$ and $E_2^*$ (See Fig-1).

A direct computation shows that
$$\mathfrak C=\{y_1E_1^*+y_2E_2^*\in \mathfrak a^*|~y_1,y_2\geq0\}.$$
Also, we have
\begin{eqnarray*}
2\sigma=E_1^*-E_2^*,& &\xi=E_1+E_2,\\
u_1=E_1,& &u_2=E_2,\\
\gamma_0=-2(E_1^*+E_2^*).& &
\end{eqnarray*}
Then one can check that (\ref{0316}) holds. In fact, $S^7(1)$ can be regarded as a Hopf $S^1$-fiberation, which is  a $S^1$-bundle over $\mathbb{CP}^3$, and $\mathbb{CP}^3$ is a Fano compactification of $PGL_2(\mathbb C)$ (cf. \cite[Example 2.2]{AK}). However, the $PGL_2(\mathbb C)\times PGL_2(\mathbb C)$-action can not be lifted to $\mathbb C^4$, so it is not of the kind given in Example \ref{general exa}.

\begin{exa}\label{exa2}
Let $K=SU(2)\times S^1$ and $G=SL_2(\mathbb C)\times \mathbb C^*$. Identify $\mathbb C^5\backslash\{O\}$ with $(M_{2\times2}(\mathbb C)\oplus\mathbb C)\backslash\{O\}$. Consider the hypersurface $$\mathcal H:=\{(A,t)\in\mathbb C^5\backslash\{O\}|\det(A)=t^2\}.$$
For any $(A,t)\in \mathbb C^5\backslash\{O\}$, define $$\rho^2(A,t):={\frac16}\left(\text{tr}(A\bar A^T)+|t|^2\right).$$ Then $M=\mathcal H\cap \{\rho=1\}$ is an  $(SL_2(\mathbb C)\times\mathbb C^*)$-Sasaki manifold of dimension $7$, whose K\"ahler cone is $\mathcal H.$
\end{exa}

As in Example \ref{exa1}, ${\frac{\sqrt{-1}}2}\partial\bar\partial\rho^2$ is the standard Euclidean metric on $\mathbb C^5$ and $M$ is the intersection of $\mathcal H$ and the unit sphere. Consider the $G\times G$-action on $\mathcal H$ given by
\begin{eqnarray*}
(G\times G)\times \mathcal H&\to&\mathcal H\\
((X_1,t_1),(X_2,t_2),(A,t))&\to&(t_1X_1At_2^{-1}X_2^{-1},t_1tt_2^{-1}).
\end{eqnarray*}
one can check directly that  (1)-(2) in Definition  \ref{definition} are satisfied.

By a direct computation, we have
$$\mathfrak g=\text{Span}_{\mathbb C}\left\{\left(\left(\begin{aligned}1& &0\\0& &-1\end{aligned}\right),0\right),\left(\left(\begin{aligned}0& &0\\1& &0\end{aligned}\right),0\right),\left(\left(\begin{aligned}0& &1\\0& &0\end{aligned}\right),0\right),\left(\left(\begin{aligned}0& &0\\0& &0\end{aligned}\right),1\right)\right\},$$
and
$$\mathfrak t^c=\text{Span}_{\mathbb C}\left\{\left(\left(\begin{aligned}1& &0\\0& &-1\end{aligned}\right),0\right),\left(\left(\begin{aligned}0& &0\\0& &0\end{aligned}\right),1\right)\right\}.$$
Let $\xi=\left(\left(\begin{aligned}0& &0\\0& &0\end{aligned}\right),\sqrt{-1}\right)$. Then $\xi\in\mathfrak z(\mathfrak k)$ and $M$ is a $G$-Sasaki manifold with the Reeb vector field $\xi$.
%$-J\xi=\left(\left(\begin{aligned}0& &0\\0& &0\end{aligned}\right),1\right)$.

Let us determine the moment cone of this Sasaki manifold. Choose a maximal torus
\begin{eqnarray*}
T^c=\left\{\left(\left(\begin{aligned}e^z& &0\\0& &e^{-z} \end{aligned}\right)t,t\right)|z,t\in \mathbb C^*\right\}.
\end{eqnarray*}
Then the restriction of ${\frac12}\rho^2$ on it is
\begin{eqnarray*}
{\frac12}\rho^2(z,t)={\frac16}(|e^z|^2+|e^z|^{-2}+1)|t|^2.
\end{eqnarray*}
Choose $E_1,E_2$ as the generators of $\mathfrak a$ and $E_1^*$, $E_2^*$ be their dual in $\mathfrak a_*$. We see that the lattice of characters of $G$ is generated by
$e_1^*={\frac12}(E_1^*+E_2^*)$ and $e_2^*={\frac12}(E_1^*-E_2^*)$ (See Fig-2).

\begin{figure}[hbp]
\centering
\includegraphics[height=1.5in]{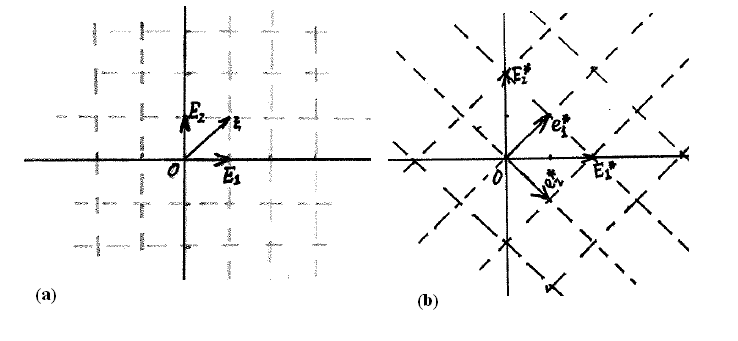}
\caption{The lattice (a) $\mathfrak a$ and (b) $\mathfrak a^*$.}
\end{figure}

A direct computation shows that
$$\mathfrak C=\{y_1e_1^*+y_2e_2^*\in \mathfrak a^*|-y_1+y_2\geq0,y_1+y_2\geq0\},$$
and the positive part is of
$$\mathfrak C_+=\{y_1e_1^*+y_2e_2^*\in \mathfrak a^*|-y_1+y_2\geq0,y_1\geq0\}.$$
Also, we have
\begin{eqnarray*}
2\sigma=2e_2^*,\\%& &\xi=E_1+E_2,\\
u_1=e_1+e_2,& &u_2=e_1-e_2,\\
\gamma_0=-3e_1^*.& &
\end{eqnarray*}

On the other hand, $\xi=E_1+E_2$, and so $\gamma_0(\xi)=-3$. Thus $\xi$ does not define a Sasaki structure such that the corresponding transverse K\"ahler form lies in ${\frac\pi{n+1}}c_1^B(M)$. But by replacing $\xi$ by $\xi'={\frac43}\xi$, we get a Sasaki structure on $M$ whose transverse K\"ahler form lies in ${\frac\pi{n+1}}c_1^B(M)$. In fact, this new Sasaki structure can be derived from the original one by applying a $\mathcal D$-homothetic deformation defined by Tanno \cite{Tanno} (see also \cite{Boyer-Galicki 2006}). It can be checked that (\ref{0316}) holds in this case. Thus the Sasaki manifold $M$ with its Reeb vector field $\xi'$, admits a Sasaki-Einstein metric. In fact, in this case, $M$ is an $S^1$-bundle over $M\slash e^{t\xi}$, which is the wonderful compactification of $SL_2(\mathbb C)$. It is known that the wonderful compactification of $SL_2(\mathbb C)$ admits a K\"ahler-Einstein metric .

\begin{exa}
Let $n=4$, $G=PSL_2(\mathbb C)\times\mathbb C^*$ and $\hat G=G\times\mathbb C^*$. Choose $2\sigma=(1,0,0)$ to be
a positive root in $\hat{\mathfrak g}\cong \mathbb R^3$. Let $\mathfrak C$ be the cone in $\hat{\mathfrak g}$ given by
\begin{eqnarray*}
\mathfrak C=\{y_3-y_2\geq0,~y_3+y_2\geq0,~2y_3-y_2-y_1\geq0,~2y_3-y_2+y_1\geq0~\}.
\end{eqnarray*}
Then there is a $\hat G$-Sasaki manifold of dimension 9 such that $\mathfrak C$ is its moment cone.
\end{exa}

Clearly, $\mathfrak C$ is a good cone. Moreover, its facets intersect with Weyl wall orthogonally. Thus by \cite[Proposition 2.5]{AK}, there is a smooth K\"ahler manifold $\hat M$ with an open dense $\hat G\times \hat G$-orbit isomorphic to $\hat G$. Furthermore, if we equip the toric orbit $Z$ in $\hat M$ with a toric cone metric, then it extends to a K\"ahler cone metric on $\hat M$ by Proposition \ref{g-sasaki-structure}.

\begin{figure}[hbp]
\centering
\includegraphics[height=1.5in]{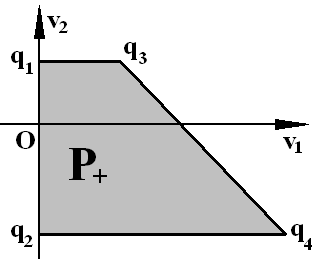}
\caption{Polytope $P_+$.}
\end{figure}

Next, we choose a possible $\xi$ such that $\hat M$ is the K\"ahler cone of a Sasaki manifold $M$ with $c_1^B(M)>0$. By Proposition \ref{Fano condition}, we see
\begin{align*}
\gamma_0&=(0,0,-1),\\
\xi&=(0,\xi^2,5),~-5<\xi^2<5.
\end{align*}
Then the polytope $P_+$ in $(\mathfrak a'_+)^*$ is the convex hull of the following four points (See Fig-3)
\begin{eqnarray}\begin{aligned}&q_1=\left(0,\frac1{\xi^2+5}\right),q_2=\left(0,\frac{-1}{-\xi^2+5}\right),\notag\\
&q_3=\left(\frac1{\xi^2+5},\frac1{\xi^2+5}\right),q_4=\left(\frac3{-\xi^2+5},\frac{-1}{-\xi^2+5}\right).\end{aligned}\end{eqnarray}

The soliton vector field on $\mathfrak a'$ is of form,
$$X=(0,\lambda)$$
for some $\lambda\in\mathbb R$. Also $\frac{2\sigma}{n+1}=\left(\frac15,0\right)$.
Let $\xi^2\to5$. Then $\lambda\to+\infty$. In this case the barycentre of $P_+$,
$$bar_X(P_+)\to\left({\frac3{40}},0\right).$$
Thus $M$ admits no Sasaki-Ricci soliton when $\xi^2$ is chosen sufficiently close to $5$.
But if $\xi^2\to-5$, we have $\lambda\to-\infty$. In this case
$$bar_X(P_+)\to\left({\frac9{40}},0\right)$$
and $M$ admits Sasaki-Ricci soliton metric when $\xi^2$ is chosen sufficiently close to $-5$.
In particular, $bar_X(P_+)=\left({\frac3{10}},0\right)$ when $\xi^2=-\frac52$. Hence we prove the existence of Sasaki-Ricci solitons on $\hat M$.

%%%%%%%%%%%%%%%%%%%%%%%%%%%%%%%%%%%%%%%%%%%%%%%%%%%%%%%%%%%%%%%%%%%%%%%%%%%%%%%%
%%%%%%%%%%%%%%%%%%%%%%%%%%%%%%%%%%%%%%%%%%%%%%%%%%%%%%%%%%%%%%%%%%%%%%%%%%%%%%%%
%%%%%%%%%%%%%%%%%%%%%%%%%%%%%%%%%%%%%%%%%%%%%%%%%%%%%%%%%%%%%%%%%%%%%%%%%%%%%%%%

\vskip20mm

\end{document}